\documentclass[11pt,reqno]{amsart} 

\usepackage[utf8]{inputenc}   
\usepackage[OT1]{fontenc}      

\usepackage[margin=3cm]{geometry} 
\usepackage[skip=0.5\baselineskip, indent=0pt]{parskip}

\usepackage{mathtools}        
\usepackage{amssymb}          
\usepackage{esint}            

\usepackage{sansmathfonts}    
\usepackage{lmodern}          

\theoremstyle{plain} 
\newtheorem{theorem}{Theorem}

\newtheorem*{proposition*}{Proposition}
\newtheorem{lemma}[theorem]{Lemma}
\newtheorem{corollary}[theorem]{Corollary}

\theoremstyle{definition} 

\newtheorem{remark}[theorem]{Remark}

\usepackage{booktabs}       
\usepackage{placeins}
\usepackage{nicefrac}       
\usepackage{microtype}      
\usepackage[normalem]{ulem} 
\usepackage{enumerate}      
\usepackage{graphicx}       
\usepackage{lipsum}         
\usepackage{todonotes}      

\usepackage{url}            
\usepackage{hyperref}       
\hypersetup{hidelinks,
pdftitle={Quantitative equidistribution of eigenvalues of Random Normal Matrices in the Wasserstein distance},
pdfauthor={Pablo García Arias}
}
\usepackage{doi}            
\usepackage{cleveref}       

\usepackage[backend=biber, style=ieee, sorting=nyt, citestyle=numeric-comp]{biblatex}
\DeclarePairedDelimiter{\norm}{\lVert}{\rVert} 
\usepackage[sans]{dsfont}
\newcommand{\carac}{\mathds{1}}
\renewcommand{\Re}{\operatorname{Re}}

\newcommand{\Var}{\operatorname{Var}}
\newcommand{\Vol}{\operatorname{Vol}}

\makeatletter

\renewcommand{\section}{\@startsection{section}{1}{\z@}%
  {-3.5ex \@plus -1ex \@minus -.2ex}
  {2.3ex \@plus .2ex}
  {\normalfont\Large\bfseries\sffamily}} 

\renewcommand{\subsection}{\@startsection{subsection}{2}{\z@}%
  {-3.25ex\@plus -1ex \@minus -.2ex}
  {1.5ex \@plus .2ex}
  {\normalfont\large\bfseries\sffamily}} 

\renewcommand{\@secnumfont}{\bfseries\sffamily}

\makeatother

\title[\rmfamily Equidistribution of Random Normal Matrices]{\rmfamily Quantitative equidistribution of eigenvalues of Random Normal Matrices in the Wasserstein distance}
\author{\rmfamily Pablo García Arias}
\address{Departament de Matemàtiques i Informàtica, Universitat de Barcelona}
\email{pablo.garcia.arias@ub.edu}
\thanks{I would also like to thank Joaquim Ortega-Cerdà for his insights during the writing of this paper.}
\keywords{Wasserstein distance, Random Normal Matrices, Hyperuniform Point Process, DPP}

\begin{document}

\begin{abstract}
	The object of study in this paper is the expected $2$-Wasserstein distance between the empirical measures of several point processes and their respective limit. For this, the main tool developed is a smoothing procedure in Euclidean spaces using the heat equation with Neumann boundary conditions. It is applied to the spectrum of Random Normal Matrices with \textit{reasonable} assumptions on the potential, as well as to Hyperuniform Point Processes such as the infinite Ginibre ensemble and the zero set of the planar Gaussian Analytic Function. In both of these cases, the technique obtains the optimal rate of convergence.
\end{abstract}

\maketitle 

\section{Introduction}

Point processes are used to construct (random) empirical measures that approximate a standard measure, such as the Lebesgue measure, but can be better computed numerically. Quantifying and bounding the error made by this substitution is a central topic in the field of study. 

A class especially relevant is the Determinantal Point Processes, as they are used in mathematical physics to model fermionic behaviour, they often appear in Random Matrix Theory, and they tend to produce well uniformly distributed points. The latter is due to an inherent repulsion between the different points, and quantifying it is the focus of this paper.

The main ensemble of point processes will be the spectrum of a Random Normal Matrix, for a given potential weight. The model instructs the point process with a determinantal structure, as well as making the eigenvalues accumulate in a compact set. The other class of point processes that is studied is the one with a homogeneous structure, that is, the first intensity is constant. Here the hypothesis of hyperuniformity will also play a relevant role to obtain sharp asymptotics. 

The Wasserstein distance is a metric over the space of probabilities on a metric space that metrizes the weak topology, so it can be used to quantify the equidistribution caused by the repulsion. Classically, this was done by directly bounding the discrepancy between the empirical measure and the reference measure, but in recent years the analogous question for the Wasserstein has been a central topic of study of the area.

To bound the Wasserstein distance, a scheme that can be applied is to smooth the empirical measures, so that they are not so ``rough'' and can be better compared with the limit measure. This is usually done through the use of Berry-Essen-like inequalities. This procedure was applied in \cite{borda2023riesz, arias2024equidistributionpointsharmonicensemble} to get optimal asymptotics in compact manifolds without boundary. Here an analogous smoothing inequality is presented for compact convex sets of the Euclidean space and its application to different point processes. Namely, upper bounds have been obtained for the expected value of some homogeneous point processes and random normal matrices models with \textit{reasonable} potentials. The asymptotics obtained are optimal for the random normal matrices and Hyperuniform point processes of Type III.

The paper is structured as follows: in Section \ref{sec:background} the relevant context is explained, in Section \ref{sec:results} the new results are stated. Section \ref{sec:proofs} contains the proofs as well as the required previous results in the literature.

\section{Background} \label{sec:background}
Let $\Omega \subseteq \mathbb{R}^d$ be a bounded open domain with Lipschitz boundary. It is well-known that the Laplacian $\Delta = \sum_{n=1}^d \frac{\partial^2}{\partial x_n^2}$ with Neumann boundary conditions has a basis of eigenfunctions $\Delta \phi_k = - \lambda_k \phi_k $ for the eigenvalues $0 = \lambda_0 \leq \lambda_1 \leq \lambda_2 \nearrow  \infty$ that decompose $L^2(\Omega)$. The eigenfunctions $\phi_k: \overline{\Omega} \to \mathbb{R} \in L^2(\Omega) \cap C(\overline{\Omega})$ are orthogonal and we will normalize them by $\norm{\phi_k}_{L^2(\Omega)} = 1$. For a measure $\mu$ on $\Omega$ we will consider its Fourier coefficients by $\widehat{\mu}(k) = \mu(\phi_k) = \int_\Omega \phi_k d \mu$. When the measure is supported in the whole space and the choice of restriction is clear, we will abbreviate $\mu|_\Omega (\phi_k) = \mu(\phi_k)$.   

We will also use the notation $dA(z) = dA(x+iy) = \frac{1}{\pi} dxdy$ for the normalized Lebesgue measure on the plane. 

\subsection{Wasserstein distance}

The Wasserstein distance in a metric space $(M, d)$ is defined for two finite measures $\mu, \nu$ as
\begin{align*}
    W_p(\mu, \nu) = \left( \inf \left\{ \int_{M\times M} d(x, y)^p d \pi(x, y) \right\} \right)^{\frac{1}{p}}, && 1 \leq p < \infty,
\end{align*} 
where the infimum is taken over all transport plans, that is, measures $\pi$ on $M\times M$ where the first and second marginals are $\mu$ and $\nu$. For this quantity to be finite is necessary that both measures have the same mass $\mu(M) = \nu(M)$, otherwise there would not exist any transport plan $\pi$. This is a distance in the set of finite measures with finite $p$--moment. 

A major property of this metric is that it induces the weak topology, that is, a sequence converges in the weak-$*$ sense $\mu_n \rightharpoonup \mu$ if and only if $W_p (\mu_n, \mu) \to 0$. Hence, if we have such a limit, one can use the Wasserstein distance to quantify the speed of convergence.

A particularly interesting case is when taking $p=2$, due to Brenier's theorem allowing to consider a subclass of transport plans. When the second measure $\nu$ is absolutely continuous, the infimum may be taken over all measurable functions $T:M \to M$ satisfying 
\begin{align*}
    W_2^2(\mu, \nu) = \inf \left\{ \int_M d(x, T(x))^2 d \nu(x) \;:\;  \nu(T^{-1}(B)) = \mu(B)\right\}.
\end{align*}    

In \cite{peyre_2018}, Peyre shows that the Wasserstein distance $W_2$ can be controlled by negative Sobolev norms. Given a measure $\mu$, define the seminorm 
\begin{align*}
    \norm{f}_{\dot{H}^1(d \mu)} = \left( \int |\nabla f|^2 d \mu \right)^{\frac{1}{2}},
\end{align*}
and for a signed measure $\gamma$, define by duality
\begin{align*}
    \norm{\gamma}_{\dot{H}^{-1}(d \mu)} = \sup \left\{ \left| \int f \; d \gamma \right| \;:\; \norm{f}_{\dot{H}^1(d\mu)} \leq 1 \right\}.
\end{align*}

 Then \cite[Theorem 1]{peyre_2018} says that for two (positive) measures 
\begin{equation} \label{eq:peyre}
     W_2(\mu, \nu) \leq 2 \norm{\mu - \nu}_{\dot{H}^{-1}(d\mu)}
\end{equation}

A relevant consequence of this result consists in bounding the Wasserstein distance by the $L_2$ norm of the difference, given that both measures are absolutely continuous. Indeed, if $\mu = \mu(x)dx$ and $\nu = \nu(x) dx$ have the same mass in the set $\Omega$ where we are computing the transport plans, and we can assume $a = \inf \mu(x) > 0$, then 
\begin{align*}
    W_2 (\nu, \mu) \leq 2 \norm{\nu - \mu}_{H^{-1}(d\mu)} \leq \frac{2}{\sqrt{a}} \norm{\nu - \mu}_{H^{-1}(dx)}.
\end{align*}    
We will assume that $\partial \Omega$ is regular enough\footnote{To consider the normal derivative, it would be enough to ask for a Lipschitz domain. Nonetheless we will apply it in piecewise-smooth sets.}, so that the norm is achieved by $\norm{\mu - \nu}_{H^{-1}(dx)} =  \norm{\nabla u}_{L^2}$ for $u$ the solution of the Neumann problem
\begin{align*}
	\left. \begin{matrix}
    - \Delta u = \mu - \nu &\text{in  } \Omega \\
    \nabla u \cdot n = 0 &\text{on  } \partial\Omega
	\end{matrix} \right\}
\end{align*}
The negative Sobolev seminorm can be bounded as
\begin{align*}
    \norm{\nabla u}^2_{L^2} = \int |\nabla u |^2 = \int (\mu-\nu) u \leq \norm{\mu - \nu}_{L^2} \norm{u}_{L^2}.
\end{align*}
Because $u$ is defined up to additive constants, Poincaré's inequality says $\norm{u}_{L^2} \leq C(\Omega) \norm{\nabla u}_{L^2}$. This gives
\begin{align*}
    \norm{\mu - \nu}_{H^{-1}(dx)} \leq C(\Omega) \norm{\mu - \nu}_{L^2}.
\end{align*} 
Notice that writing the first Laplacian eigenvalue as
\begin{align*}
    \lambda_1 = \inf_{u \neq 0} \frac{\int |\nabla u |^2}{\int u^2} & \Rightarrow \int |u|^2 \lesssim \frac{1}{\lambda_1} \int |\nabla u|^2
\end{align*} 
implies that we can take $C(\Omega) \leq \frac{1}{\sqrt{ \lambda_1(\Omega)}}$. Moreover, if we assume that $ \Omega $ is convex, then the Payne-Weinberger states that $ \lambda_1 (\Omega) \geq \frac{\pi^2}{\operatorname{diam}(\Omega)^2}$, so we will take $ C(\Omega) \lesssim \operatorname{diam}(\Omega) $.

All in all, given two absolute continuous measures $\mu, \nu$ on $\Omega \subseteq \mathbb{R}^2$ with one bounded below $\inf_\Omega \mu(x) = a > 0$ then
\begin{align} \label{eq:dynamicWass}
	W_2(\mu, \nu) \lesssim \frac{\operatorname{diam}(\Omega)}{\sqrt{a}} \norm{\mu-\nu}_{L^2(\Omega)}.
\end{align} 

Two standard references for a more detailed explanation of Optimal Transport and Wasserstein distance are \cite{santambrogio_2015, villani_2009}.

\subsection{Point Processes} \label{sec:pointprocesses}

A (simple) point process in a topological space $M$ is a random variable $\Xi$ that outputs subsets of $M$ without accumulation points. They are often characterized by their joint intensities: the family of non-negative functions $\rho_k: M^k \to [0, \infty)$ satisfying
\begin{align*}
    \mathbb{E} \sum_{x_1, \dots, x_k \in \Xi \text{ distinct}} f(x_1, \dots, x_k) = \int f(x_1, \dots, x_k) \rho_k(x_1, \dots, x_k) d \nu(x_1) d\nu(x_2) \dots d\nu(x_k)
\end{align*} 
for any measurable  $f$. The background measure is denoted by $\nu$, and it is often some normalization of the Lebesgue measure. 

The sum $\sum_{x_1, \dots, x_k \in \Xi} f(x_1, \dots, x_k)$ over all tuples of indices $(i_1, i_2, \dots, i_k)$ without repetition, to which we are computing the expectation, is called a statistic of the point process. An especially interesting case is linear statistics, that is, the random variable $\sum_{x \in \Xi} f(x)$. 

A point process $\Xi$ is said to be homogeneous when the first intensity $\rho_1$  is constant. For it to be of interest, the constant must be strictly positive so that the point process is not empty. This will be tacitly assumed. Intuitively, the constant first intensity means that the process ``looks the same'' everywhere.

One can rescale such point process by a factor of $L^{-\frac{1}{d}}$ to get a family of homogeneous point processes $\Xi_L$ with first intensity $\rho_1^L(x) = cL$. This concentration of the points can also be seen as an expansion of the target domain. Indeed, let $A \subseteq  \mathbb{R}^d$ be the compact set one is working with, then
\begin{align*}
    x \in \Xi_L \cap A \iff L^{\frac{1}{d}}x = y \in \Xi \cap (L^{\frac{1}{d}} A).
\end{align*} 

For such point processes we will fix a set $\Omega$ and consider the sequence of (normalized) empirical measures 
\begin{align*}
    \mu_L = \frac{1}{N}\sum_{x \in \Xi_L \cap \Omega} \delta_x, && N= \# \Xi_L \cap \Omega.
\end{align*}
An example of this rescaling construction is the zero set of the planar GAF. The planar Gaussian Analytic Function is the random holomorphic function
\begin{align*}
    f_L(z) = \sum_{n=0}^{+\infty} a_n \sqrt{ \frac{L^n}{n!} } z^n, && a_n \sim N_\mathbb{C}(0, 1) \text{  i.i.d.}
\end{align*}  
which has covariance kernel $K(z, w) = e^{L z \overline{w}}$. One may consider the point process given by the random zero set $\mathcal{Z} ( f_L) = \left\{ z_1, z_2, \dots \right\}$. It is known to be a homogeneous point process with first intensity $\rho_1(z) = \frac{L}{\pi} dz$. 

\begin{figure}[ht]
    \centering
    \includegraphics[width=0.3\linewidth]{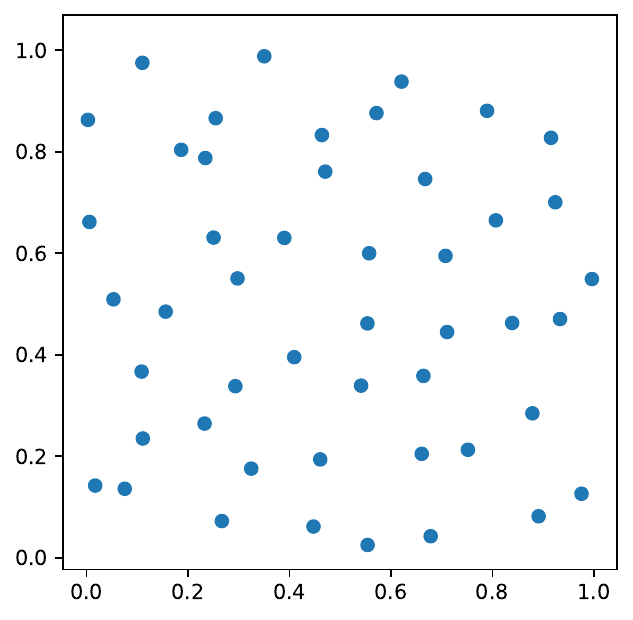}~
    \includegraphics[width=0.3\linewidth]{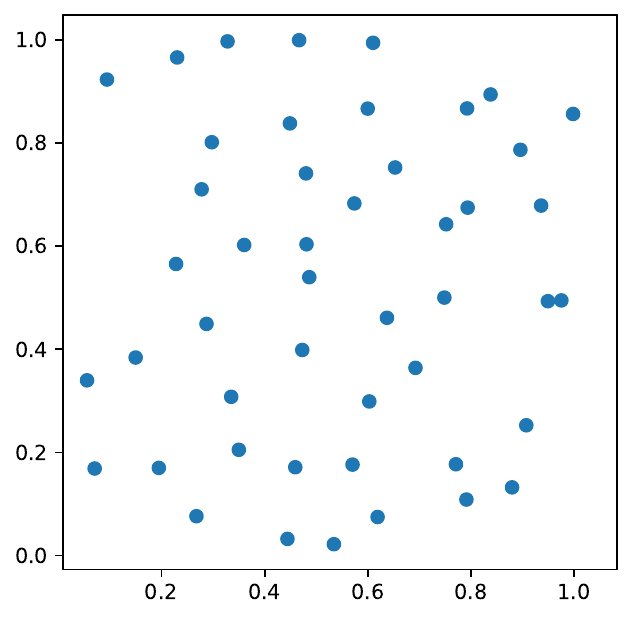}
    \caption{To the left, a sample from the zero set of the planar GAF, restricted to the unit square, and intensity $L=150$. To the right, a sample from the infinite Ginibre ensemble restricted to the unit square, and intensity $L=150$.}
\end{figure}

A particularly studied class of point processes is that of Determinantal Point Processes (DPPs). They are the ones whose joint intensities are given by a determinant $\rho_k (x_1, \dots, x_k) = \det(K(x_i, x_j))$ for some kernel $K: M\times M \to \mathbb{C}$ that we will further assume to be Hermitian.

If the kernel is locally square integrable on $M^2$ one can consider the operator 
\begin{align*}
    \mathcal{K}: L^2(M) & \longrightarrow L^2(M) \\
    f & \longmapsto \mathcal{K} f (x) = \int K(x, y) f(y) d\nu(y).
\end{align*} 
When $\mathcal{K}$ is a projection operator over some $L^2$ vector subspace, we will say that $\Xi$ is a projection DPP. If this subspace has the form $\overline{\operatorname{span}}  \left\{ f_n \right\}$ with $\left\{ f_n \right\}$ orthonormal then the kernel $K$ can be written $K(x, y) = \sum f_n(x) \overline{f_n(y)}$ almost everywhere. 

For linear statistics of a general DPP, the expected value can be readily expressed as 
\begin{align*}
    \mathbb{E} \sum_{x \in \Xi} f(x) = \int f(x) K(x, x) d\nu(x).
\end{align*}
Projection DPPs satisfy the following reproducing property for any point $x$:
\begin{align}\label{eq:reproducingprop}
    K(x , x) = \int |K(x, y)|^2 d \nu (y).
\end{align}
Using this, the variance has the useful formula
\begin{align*}
    \Var \sum_{x \in \Xi} f(x) = \frac{1}{2}\iint |f(x) - f(y)|^2 |K(x, y)|^2 d\nu(x)d\nu(y).
\end{align*}

An example is the infinite Ginibre ensemble: it is the DPP on the complex plane $\mathbb{C}$ with the projection kernel
\begin{align*}
    K_L(z, w) = \frac{L}{\pi} e^{-L \frac{|z|^2+|w|^2-2z\overline{w}}{2}}
\end{align*}
with respect to the Lebesgue measure as background. Notice that it is a homogeneous point process to which one is applying the previous concentration procedure to the case $L=1$.

The point processes mentioned before have especially good equidistribution properties, due to their hyperuniformity. More formally, a homogeneous point process $ \Xi $ is said to be hyperuniform when it is invariant under translations and 
\begin{equation*}
	\lim_{r \to \infty} \frac{\Var \left( \Xi(B_r) \right) }{\operatorname{Vol}(B_r)} = 0. 
\end{equation*}

This class of point processes is often characterized and studied on the Fourier side. For such point processes, it can be shown there is a non-negative measure $ S $ called the spectral measure or Bartlett  spectrum such that
\begin{equation*}
	\Var \left(\sum_{x \in \Xi} f(x)\right) = \int_{\mathbb{R}^d} \left| \widehat{f}(\xi) \right|^2 dS(\xi).  
\end{equation*}
For the Fourier transform, we are taking the convention $ \widehat{f}(\xi) = \int f(x) e^{- 2 \pi i x \cdot \xi} dx $ for $ L^1 $ functions. When this measure is absolutely continuous, its density $ s $ is called the structure factor. The hyperuniformity condition is in that case equivalent to $ s(0) = 0 $. There are 3 distinguished types of hyperuniformity depending on the behaviour of $ s $ near the origin:
\begin{itemize}
	\item Type I: $ s(w) = c |w|^\alpha + o(|w|^\alpha) $ as $ w \to 0 $, for some $ \alpha > 1 $. This implies that the variance behaves like $ \Var (\Xi(B(0, R))) = O(R^{d-1})$.
	\item Type II: $ s(w) = c |w|+ o(|w|) $ as $ w \to 0 $. The variance has now the asymptotic $ \Var (\Xi(B(0, R))) = O(R^{d-1}\log R)$.
	\item Type III: $ s(w) = c |w|^\alpha + o(|w|^\alpha) $ as $ w \to 0 $, for some $ 0 < \alpha < 1 $. They then satisfy $ \Var (\Xi(B(0, R))) = O(R^{d-\alpha})$.
\end{itemize}
It is important to highlight that this is not an exhaustive categorization of all hyperuniform point processes. When considering the dilation of the point process, the variance of linear statistics is expressed as 
\begin{equation*}
	\Var \left( \sum_{x \in \Xi_L} f(x) \right) = L \int_{\mathbb{R}^d} \left| \widehat{f}(\xi) \right|^2 s\left( \frac{|\xi|}{L^{ \frac{1}{d} }}  \right)  d\xi.  
\end{equation*}

The function $ s $ is also uniformly continuous and bounded. As a consequence, we can bound $ s(w) \approx \min ( |w|^\alpha, 1 ) $ in each of the three types. Both the zero set of the planar GAF ($ \alpha = 4 $) and the infinite Ginibre ensemble ($ \alpha=2 $) are of type I.

For a more detailed introduction to the theory behind point processes, see the book \cite{Hough_Krishnapur_Peres_2012}. A survey more focused on hyperuniformity is \cite{coste_2021}.

The other main point process we will study comes from random matrix theory. Given an upper semicontinuous function $Q : \mathbb{C} \to \mathbb{R}$ called the potential, one can associate to it a random normal matrix model consisting of the $n \times n$ complex normal matrices, that is, the ones satisfying $M^*M=MM^*$, and the probability measure
\begin{align*}
    \frac{1}{\mathcal{Z}_n} e^{-n \operatorname{tr} Q(M)} dM,
\end{align*}  
where $dM$ denotes the standard Riemannian volume form on the manifold of the normal matrices, and $\mathcal{Z}_n$ is the appropriate normalizing constant. $\operatorname{tr} Q(M)$ can be interpreted as the sum of $Q$ evaluated on the eigenvalues of the normal matrix $M$.

To ensure integrability and the existence of $\mathcal{Z}_n$ one has to impose some growth condition on $Q$, being the standard one that 
\begin{align}\label{eq:growth}
    \liminf_{|z| \to \infty} \frac{Q(z)}{\log |z|^2} > 1.
\end{align}  

The random measure one is usually interested in is not this one on the normal matrices but rather the one induced by considering their eigenvalues, as they are (almost surely) $n$ distinct points in $\mathbb{C}$. This turns out to be a DPP with distribution
\begin{align*}
    d \mathbb{P}_n  (z_1, \dots, z_n) = \frac{1}{Z_n} \prod_{1 \leq i < j \leq n} \hspace{-0.3em}  |z_i - z_j|^2\hspace{-0.3em} \prod_{1 \leq j \leq n} \hspace{-0.3em}  e^{-n Q(z_j)} dA(z_j) = \det \left( K_n(z_i, z_j) \right) dA(z_1)\hspace{-2pt} \dots \hspace{-2pt}  dA(z_n).
\end{align*}
For the kernel we will choose 
\begin{align*}
    K(z, w) = K_n(z, w) = e^{-\frac{1}{2} n (Q(z)+Q(w))} \sum_{j=0}^{n-1} p_j(z) \overline{p_j(w)},
\end{align*}
where $p_j$ is the planar orthogonal polynomial of degree $j$ and positive leading coefficient satisfying
\begin{align*}
    \int p_j(z) \overline{p_k(z)} e^{-nQ(z)} dA(z) = \delta_{j, k}.
\end{align*}

Notice that this is a reproducing kernel, so it will induce a projection DPP.

The study of random matrices, their eigenvalues, or similar constructions has been of interest to mathematicians and physicists for a while, see for instance the seminal work of Ginibre \cite{Ginibre1965} in 1965. The general mathematical structure used here probably dates back to the decade of the 2000s, appearing in works by Elbau and Felder \cite{elbau_felder_2005} and by Hedenmalm and Makarov \cite{hedenmalm_makarov_2012}. 

Under fairly general conditions on the potential, the eigenvalues tend to concentrate in a compact set as the number $n$ goes to infinity. This compact set $S$ is called the droplet and can be constructed using potential theory. As there is a finite number of points, the normalized empirical measure is defined as 
\begin{align*}
    \mu = \frac{1}{n} \sum_{i=1}^n \delta_{z_i}
\end{align*}
 which in the limit $n \to \infty$ converges weakly to the equilibrium measure $d\sigma = \frac{1}{4} \Delta Q \carac_S dA$.

Let the function $\widehat{Q}$, called the equilibrium potential, be given by
 \begin{align*}
   \widehat{Q} (z) =  \sup \left\{ f(z) \;:\; f:\mathbb{C} \to \mathbb{R}, \text{  subharmonic  }, f(w) \leq Q(w), f(w) \leq \log_+ |w|^2 + O(1) \right\}.
 \end{align*}
It can be checked that $\widehat{Q}(z) = \log_+|z|^2 + O(1)$ and is harmonic outside of a compact set $S$, which turns out to be the droplet. The equilibrium measure then can be expressed as 
\begin{align*}
    d \sigma(z) = \frac{1}{4} \Delta \widehat{Q}(z) dA(z) = \frac{1}{4} \Delta Q(z) \carac_S(z) dA(z).
\end{align*} 

One can also consider the coincidence set $S^* = \left\{ Q = \widehat{Q} \right\}$. This is called the pre-droplet. As expected, $S \subseteq S^*$, and if one assumes $Q$ is $C^2$ in the difference set $S^* \backslash S$ then one has $\Delta Q (z) = 0$ almost everywhere in $S^* \backslash S$. 

The accumulation of the spectrum can be seen in expectation too. Indeed, notice $\mathbb{E} \mu$ is an absolutely continuous measure w.r.t. Lebesgue whose density has the behaviour
\begin{align*}
    \frac{1}{n} \rho_1(z) = \frac{1}{n} K(z, z) \xrightarrow{n \to \infty} \left\{ \begin{matrix}
		    \frac{1}{4}\Delta Q(z) & z\in \mathring{S} \\[0.6em]
		    \frac{1}{8} \Delta Q(z) & z \in \partial S \\[0.6em]
        0 & z \notin S \\
    \end{matrix} \right.
\end{align*}

A classic example of the Random Matrix Model is the Ginibre ensemble. It corresponds to taking $Q(z) = |z|^2$. The droplet is then the unit ball $S = \overline{\mathbb{D}}$. Another way of seeing this probability distribution is to consider the eigenvalues of the random $n\times n$  matrix whose entries are i.i.d. complex Gaussians and then rescale them by a $\sqrt{n}$ factor. A generalization of this process is the Elliptic Ginibre ensemble, which is the one with the potential $Q(z) = \frac{1}{1-\tau^2} \left( |z|^2 - \frac{\tau}{2} (z^2 + \overline{z}^2) \right)$ for $0 \leq \tau < 1$. 

A more comprehensive explanation of the RNM model can be found in \cite{WardIdentities, MarzoUniversality}, among others.

\begin{remark}\label{remark:regularity}
    Our regularity assumptions on the potential are the following:
    \begin{itemize}
	\item Growth condition \eqref{eq:growth}.
	\item $ Q\in C^2(\mathbb{C}) $.
        \item $Q$ is real-analytic in some neighbourhood of the droplet $S$.
        \item $\Delta Q \neq 0$ in $S$.
	\item $\partial S$ is a $C^\omega$ smooth Jordan curve.  
    \end{itemize}
They are chosen mainly so that we can apply some results from the papers \cite{BerezinTransform, ameur_2011} that we will require.
\end{remark}
\begin{figure}[!h]
    \centering
    \includegraphics[height=0.3\linewidth]{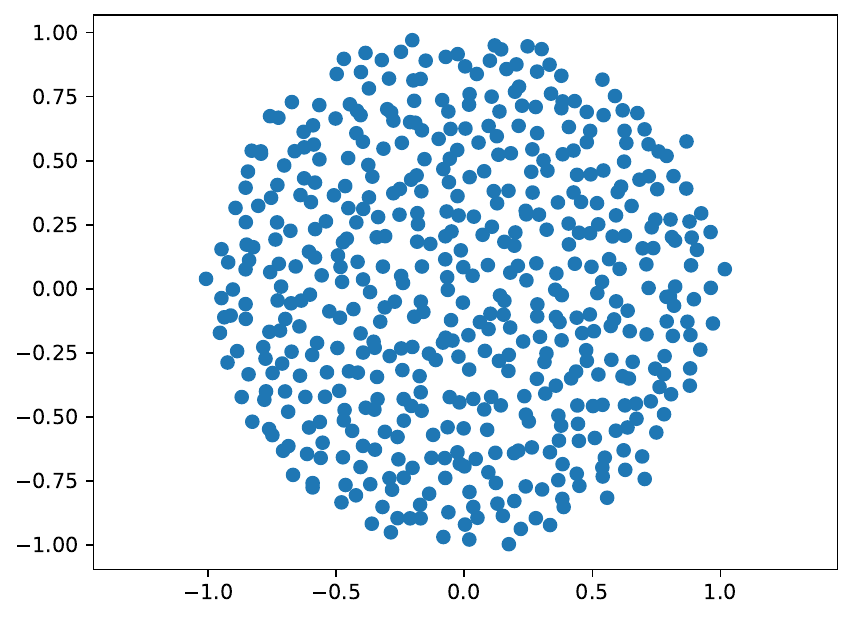}
    \includegraphics[height=0.3\linewidth]{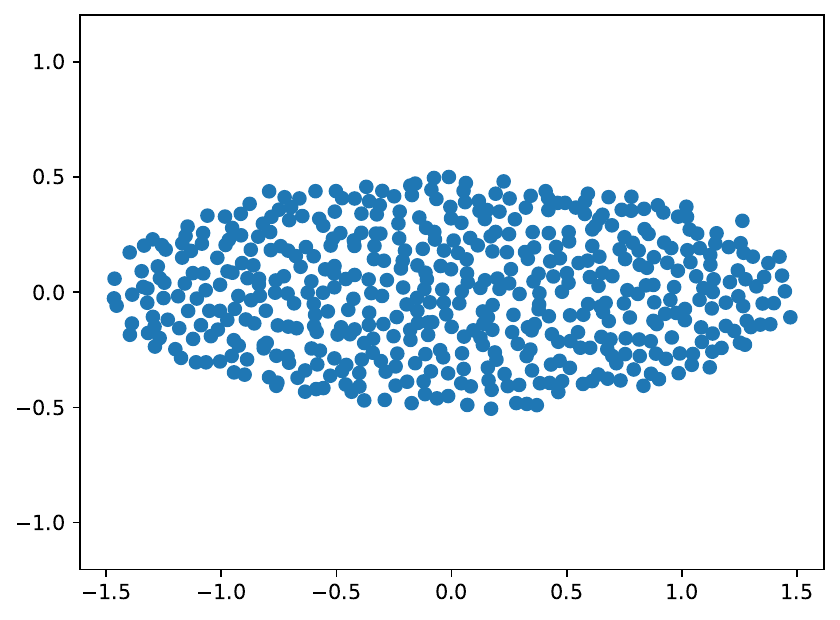}
    \caption{Sample of 500 points of the Ginibre ensemble (left) and the Elliptic Ginibre ensemble (right).}
    \label{fig:rnm}
\end{figure}

The relationship between the Wasserstein distance and point configuration has been influenced in great part by the following classic lower bound. As is the case with the empirical measures, when one has a purely atomic probability measure $\mu$, supported at $n \in \mathbb{N}$ points, and an absolutely continuous probability measure $\sigma$, with density bounded below strictly from zero $\sigma(z) \geq c > 0$ in its support, then
\begin{equation*}
    W_p(\mu, \sigma) \geq W_1(\mu, \sigma) \geq C \frac{1}{N^{ \frac{1}{d} }},
\end{equation*}
with $d$ the dimension and the constant $C$ depending on $d$ and $\sigma$ only. Hence, when studying the asymptotics of $\mathbb{E} W_2$, one is often focused on the upper bounds. 

It is worth commenting on the case of the Poisson Point Process, that is, the case of independent identically distributed points. When $x_1, x_2, \dots, x_n$ are independent and uniformly distributed on the cube $[0,1]^d$, they behave like
\begin{align*}
    \mathbb{E} W_2 \left( \frac{1}{N} \sum \delta_{x_i}, \operatorname{Vol}|_{[0,1]^d} \right) \sim \left\{ \begin{matrix}
        \displaystyle\frac{\sqrt{\log n}}{\sqrt{n}},& \quad d=2, \\[1em]
        \displaystyle\frac{1}{N^{ \frac{1}{d} }}, & \quad d \geq 3.
    \end{matrix}\right.
\end{align*}

\section{Main results} \label{sec:results}
    
    The general outline is to apply a heat smoothing procedure over a convex set in order to better compare the empirical measure and the limit using the expected $2$-Wasserstein distance. This is developed in Section \ref{sec:smoothing}. It has application to several classes of point processes, notably to the Random Normal Matrices model and Homogeneous point processes with adequate bounds on the Variance.

    With respect to the related literature results, it is worth signalling that the previous results focused mostly on the Ginibre ensemble: Prod'Homme proved in his thesis \cite{prodhomme} the rate of convergence $O(\frac{1}{\sqrt{n}})$ for the expected $2$-Wasserstein distance; and Jalowy \cite{jalowyCircularLaw} studied several asymptotics for the $1$-Wasserstein distance as well as concentration of the probability in $W_p$. The Coulomb gases model studied in \cite{CHAFAI20181447} includes the RNM model, proving concentration results that imply an asymptotic of the order $ \mathbb{E} W_1 \lesssim \frac{\sqrt{\log n}}{\sqrt{n}} $.

    Depending on whether one studies the whole spectrum on the complex plane, or just locally on the droplet, the heat smoothing can be applied to obtain the two theorems below. 
    \begin{theorem}[Global version] \label{thm:RNM}
        Let $Q$ be a potential satisfying the regularity assumptions of Remark \ref{remark:regularity} and whose droplet $S$ is convex. If we denote by $z_1, z_2, \dots, z_n$ the points given by the RNM model, then
        \begin{align*}
		\mathbb{E} W_2\left( \frac{1}{n} \sum \delta_{z_i},  \frac{1}{4}\Delta Q\carac_{S} dA \right) \lesssim\frac{1}{\sqrt{n}}.
        \end{align*}
    \end{theorem}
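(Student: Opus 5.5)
We prove the bound by splitting the empirical measure into the points inside and outside the droplet, and by heat-smoothing the inside part on $S$. Write $\mu=\frac1n\sum\delta_{z_i}$ and $\sigma=\frac14\Delta Q\carac_S\,dA$.

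\textbf{Step 1: removing the points outside $S$.} First push every point outside $S$ to its nearest point of $\partial S$, which is well defined since $S$ is convex. This gives a measure $\tilde\mu$ supported on $S$ with
\begin{align*}
\mathbb{E}W_2^2(\mu,\tilde\mu)\le \frac1n\,\mathbb{E}\sum_i d(z_i,S)^2=\frac1n\int d(z,S)^2K_n(z,z)\,dA(z).
\end{align*}
The one-point function decays like $n\,e^{-c\,n\,d(z,S)^2}$ in a neighbourhood of $\partial S$ (Ameur et al.), and exponentially far away by the growth condition \eqref{eq:growth}. The integral is therefore $O(n^{-1}\cdot n^{-1/2})=o(n^{-1})$, so $\mathbb{E}W_2(\mu,\tilde\mu)\lesssim n^{-1/2}$.

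\textbf{Step 2: heat smoothing on $S$.} It remains to bound $\mathbb{E}W_2(\tilde\mu,\sigma)$ on $\Omega=\mathring S$. Let $P_t$ be the Neumann heat semigroup of $\Omega$, with kernel $p_t(x,y)=\sum_k e^{-\lambda_k t}\phi_k(x)\phi_k(y)$, and use
\begin{align*}
W_2(\tilde\mu,\sigma)\le W_2(\tilde\mu,P_t\tilde\mu)+W_2(P_t\tilde\mu,P_t\sigma)+W_2(P_t\sigma,\sigma).
\end{align*}
\begin{itemize}
\item On a convex domain the Neumann heat flow is the law of reflected Brownian motion, so transporting each point along it gives $W_2(\nu,P_t\nu)\lesssim\sqrt t$ for any probability $\nu$. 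This handles the first and third terms.
\item $P_t\sigma$ is bounded below on $S$, since $\Delta Q\ge c>0$ and $P_t$ preserves lower bounds.
\item By \eqref{eq:dynamicWass}, applied with the lower bound from the previous item,
\begin{align*}
W_2(P_t\tilde\mu,P_t\sigma)\lesssim\|P_t(\tilde\mu-\sigma)\|_{L^2(\Omega)}=\Big(\sum_{k\ge1}e^{-2\lambda_kt}\,|\widehat{\tilde\mu}(k)-\widehat\sigma(k)|^2\Big)^{1/2}.
\end{align*}
\end{itemize}
Jensen then gives
\begin{align*}
\mathbb{E}W_2(\tilde\mu,\sigma)\lesssim\sqrt t+\Big(\sum_{k\ge1}e^{-2\lambda_kt}\,\mathbb{E}|\widehat{\tilde\mu}(k)-\widehat\sigma(k)|^2\Big)^{1/2}.
\end{align*}
Equivalently, the sum equals $\mathbb{E}\|P_t(\tilde\mu-\sigma)\|_{L^2}^2$.

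\textbf{Step 3: bias and variance of the Fourier coefficients.} Split each coefficient as
\begin{align*}
\widehat{\tilde\mu}(k)-\widehat\sigma(k)=\big(\widehat{\tilde\mu}(k)-\mathbb{E}\widehat{\tilde\mu}(k)\big)+\big(\mathbb{E}\widehat{\tilde\mu}(k)-\widehat\sigma(k)\big).
\end{align*}
\begin{itemize}
\item \emph{Bias.} The Berezin-transform asymptotics give $\frac1nK_n(z,z)=\frac14\Delta Q+O(n^{-1})$ in the bulk, with an error density confined to an $n^{-1/2}$-layer around $\partial S$. Together with Step 1 and $\|P_t\|_{L^1\to L^2}\lesssim t^{-1/2}$, this bounds the bias contribution by $\|P_t(\mathbb{E}\tilde\mu-\sigma)\|_{L^2}\lesssim n^{-1}+t^{-1/2}n^{-1/2}$.
\item \emph{Variance.} For each $x$, $P_t\tilde\mu(x)=\frac1n\sum_i p_t(x,\pi(z_i))$, where $\pi$ is the projection onto $S$. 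The projection DPP variance formula gives
\begin{align*}
\Var\Big(\tfrac1n\sum_i f(z_i)\Big)=\frac1{2n^2}\iint|f(z)-f(w)|^2|K_n(z,w)|^2\,dA\,dA\lesssim\frac1{n^2}\int|\nabla f|^2\,dA,
\end{align*}
using the Gaussian off-diagonal decay of $K_n$ at scale $n^{-1/2}$. Summing over $x\in\Omega$, or equivalently over $k$ with $f=\phi_k\circ\pi$ and $\int|\nabla\phi_k|^2=\lambda_k$, the variance part is at most $n^{-2}\sum_k\lambda_ke^{-2\lambda_kt}$. Weyl's law $\lambda_k\asymp k$ bounds this by $n^{-2}t^{-2}$.
\end{itemize}

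\textbf{Step 4: choice of $t$.} The total is
\begin{align*}
\mathbb{E}W_2\lesssim\sqrt t+\frac1{nt}+\frac{1}{\sqrt{nt}}+\frac1n .
\end{align*}
Taking $t=1/n$ gives $\mathbb{E}W_2\lesssim n^{-1/2}$.

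The main obstacle is Step 3 near $\partial S$. Both the Gaussian decay of $|K_n|^2$ uniformly up to the boundary and the boundary-layer error of the one-point function have to be imported from \cite{ameur_2011, BerezinTransform}. If the boundary layer's bias turns out worse than $t^{-1/2}n^{-1/2}$, I would first bound it directly through \eqref{eq:peyre}, transporting the $n^{-1/2}$-thin layer a distance $n^{-1/2}$.
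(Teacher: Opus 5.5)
\textbf{Step 4 fails: your bound is $O(1)$ at $t=1/n$.} Your argument does not reach $n^{-1/2}$. With $t=1/n$ one has $\frac{1}{nt}=\frac{1}{\sqrt{nt}}=1$, so the displayed total is $O(1)$. Optimizing $t$ in your own estimate gives at best $n^{-1/4}$, since $\max\bigl(\sqrt t,(nt)^{-1/2}\bigr)\ge n^{-1/4}$.

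The culprit is Step 2. Inequality \eqref{eq:dynamicWass} controls $W_2$ by an $L^2$ norm, which discards the weight $\lambda_k^{-1}$ of the $\dot H^{-1}$ norm, and this loss is genuine:
\begin{itemize}
\item At $\sqrt t\approx n^{-1/2}$ a heat ball contains $O(1)$ eigenvalues. The roughly $n$ modes with $\lambda_k\approx n$ each have $\Var\widehat{\tilde\mu}(k)\approx n^{-1}$, so $\mathbb{E}\|P_t(\tilde\mu-\mathbb{E}\tilde\mu)\|_{L^2}^2\gtrsim 1$.
\item The deterministic edge discrepancy is an $O(1)$ density error on a layer of width $n^{-1/2}$, so it alone has $L^2$ norm $\approx n^{-1/4}$.
\end{itemize}
The repair is to apply \eqref{eq:peyre} to $P_t\tilde\mu,P_t\sigma$, as in Lemma \ref{lemma:smooth_fourier} and Theorem \ref{thm:smoothing}. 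The variance part then becomes $n^{-2}\sum_k e^{-\lambda_k t}\approx (n^2t)^{-1}$, which is fine at $t=1/n$. The bias must then be bounded in $\dot H^{-1}$, by a trace-theorem argument on the boundary layer as for $\gamma_1,\gamma_2$ in Lemma \ref{lemma:RNM4}, not through $\|P_t\|_{L^1\to L^2}$.

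\textbf{Step 3 is unsupported near the edge.} Even after that repair, your variance bound lacks justification exactly where it matters. Gaussian off-diagonal decay of $K_n$ at scale $n^{-1/2}$ is false at $\partial S$. For the Ginibre ensemble with $|z|=|w|=1$ and $|z-w|\gg\sqrt{\log n/n}$, summing the tail of the truncated exponential gives $|K_n(z,w)|\approx \sqrt{n}/(\sqrt{2\pi}\,|z-w|)$. This tangential power-law decay is what produces the $H^{1/2}(\partial S)$ boundary term in the variance of smooth linear statistics.

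The estimates you propose to import say nothing in this region. \eqref{eq:bulkbound} requires $d(z,\partial S)>a_0\delta_n$, and \eqref{eq:distantzw} gives no decay once $d(z,S^c)\lesssim n^{-1/2}$. Since $\phi_k\circ\pi$ does not vanish near $\partial S$, you need a uniform estimate such as $\Var\sum_i\phi_k(\pi(z_i))\lesssim n^{1-s}\lambda_k^{s}$ for some $s>0$, and it must include pairs in the edge layer. With only $K_n(z,z)\lesssim n$ and the reproducing property available there, the edge contribution is bounded only by $n\int_{\mathrm{layer}}|\phi_k|^2\lesssim \sqrt{n\log n}\,\lambda_k^{1/2}$, which costs a power of $\log n$ in the final rate. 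A sharp bound would require off-diagonal edge asymptotics of $K_n$ together with a trace estimate for the boundary term. That is a substantial missing ingredient.

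\textbf{How the paper avoids this.} The paper is designed to sidestep exactly this issue:
\begin{itemize}
\item It heat-smooths only on the bulk $S_n=r_n\mathring S$, using Neumann eigenfunctions of $S_n$ extended by zero. Every contributing pair then has a point at distance $\gtrsim\delta_n$ from $\partial S$.
\item The price of the jump at $\partial S_n$ is that these functions lie only in $H^{s_0}$, $s_0<\frac12$. This still gives $\lambda_k^{s_0}n^{-1-s_0}$ and the rate $n^{-1/2}$ via Corollary \ref{cor:smoothing}.
\item The edge is handled separately by the dyadic redistribution of Lemma \ref{lemma:RNM2}, which needs only the counting variances of Lemma \ref{lemma:variance}.
\end{itemize}
Your Step 1, the nearest-point projection onto the convex droplet, is correct and is a cleaner substitute for Lemma \ref{lemma:RNM1}.
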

    \begin{theorem}[Local version]\label{thm:RNMbulk}
        Let $Q$ be a potential satisfying the regularity assumptions of Remark \ref{remark:regularity}, and let $\Omega$ be a convex open set with positive measure and $\overline{\Omega} \subset \mathring{S}$, then
        \begin{align*}
            \mathbb{E} W_2\left( \frac{1}{\# \{z_i\in \Omega\} } \sum_{z_i \in \Omega} \delta_{z_i},  \frac{1}{4 \sigma(\Omega) }\Delta Q\carac_{\Omega} dA \right) \lesssim\frac{1}{\sqrt{n}}.
        \end{align*}
    \end{theorem}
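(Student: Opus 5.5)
We prove Theorem \ref{thm:RNMbulk} by heat smoothing on $\Omega$. Write $\mu_\Omega = \frac{1}{N_\Omega}\sum_{z_i\in\Omega}\delta_{z_i}$, with $N_\Omega = \#\{z_i\in\Omega\}$, and $\sigma_\Omega = \frac{1}{4\sigma(\Omega)}\Delta Q\carac_\Omega\, dA$. Let $P_t$ be the Neumann heat semigroup on $\Omega$. Its kernel is $p_t(x,y)=\sum_k e^{-\lambda_k t}\phi_k(x)\phi_k(y)$, and it preserves mass. We use the triangle inequality
\begin{align*}
W_2(\mu_\Omega,\sigma_\Omega)\le W_2(\mu_\Omega,P_t\mu_\Omega)+W_2(P_t\mu_\Omega,P_t\sigma_\Omega)+W_2(P_t\sigma_\Omega,\sigma_\Omega).
\end{align*}
\begin{itemize}
\item \emph{First term.} Couple each atom $\delta_x$ with $p_t(x,\cdot)$. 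Since $\Omega$ is convex, the reflected Brownian motion started at $x$ moves a mean-square distance $\lesssim t$ in time $t$. This gives the deterministic bound $W_2(\mu_\Omega,P_t\mu_\Omega)\lesssim\sqrt t$.
\item \emph{Third term.} The density $\frac{1}{4}\Delta Q/\sigma(\Omega)$ is smooth and bounded below on $\overline\Omega$, so the same coupling gives $\lesssim\sqrt t$.
\item \emph{Middle term.} Both measures are absolutely continuous, and $P_t\sigma_\Omega$ has density bounded below, uniformly in $t$, by the minimum principle. So \eqref{eq:dynamicWass} applies, but we use its sharper $\dot H^{-1}$ form, which avoids losing a factor through Poincaré. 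In the Neumann basis,
\begin{align*}
W_2(P_t\mu_\Omega,P_t\sigma_\Omega)^2\lesssim \sum_{k\ge1}\frac{e^{-2\lambda_k t}}{\lambda_k}\,|\widehat{\mu_\Omega}(k)-\widehat{\sigma_\Omega}(k)|^2 .
\end{align*}
\end{itemize}
We choose $t\asymp 1/n$. The target is then $\mathbb{E}$ of the last sum being $\lesssim 1/n$, after which Cauchy--Schwarz gives $\mathbb{E}W_2\lesssim n^{-1/2}$.

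To estimate the sum, first handle the random normalisation. Write
\begin{align*}
\mu_\Omega-\sigma_\Omega=\frac{1}{N_\Omega}\Bigl(\nu_n-\mathbb{E}\nu_n\Bigr)+\frac{1}{N_\Omega}\Bigl(\mathbb{E}\nu_n-\tfrac{N_\Omega}{n\,\sigma(\Omega)}\,n\sigma|_\Omega\Bigr),
\qquad \nu_n=\sum_{z_i\in\Omega}\delta_{z_i}.
\end{align*}
On the event $N_\Omega\ge \frac12 n\sigma(\Omega)$, the prefactor $1/N_\Omega$ is $\lesssim 1/n$. The complementary event has tiny probability, since $\Var N_\Omega=O(\sqrt n)$ for DPP smooth-boundary counts. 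There we bound $W_2\le\operatorname{diam}\Omega$, and the contribution is negligible.

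The fluctuation part is controlled by the DPP variance formula. Since $\phi_k$ has zero mean for $k\ge1$, the $k$-th Fourier coefficient $\widehat{\nu_n}(k)-\mathbb{E}\widehat{\nu_n}(k)$ of the fluctuation is the centred linear statistic of $f_k=\phi_k\carac_\Omega$, with variance
\begin{align*}
\Var\sum_{z_i}f_k(z_i)=\tfrac12\iint|f_k(z)-f_k(w)|^2|K_n(z,w)|^2\,dA\,dA .
\end{align*}
In the bulk, $|K_n(z,w)|^2\lesssim n^2e^{-cn|z-w|^2}$. Here we invoke the off-diagonal kernel estimates of \cite{ameur_2011, BerezinTransform}, which hold under Remark \ref{remark:regularity}. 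Pairs $(z,w)$ inside $\Omega$ contribute $\lesssim\int|\nabla\phi_k|^2=\lambda_k$. Pairs across $\partial\Omega$ see the jump of $f_k$ and contribute $\lesssim\sqrt n\,\|\phi_k\|^2_{L^2(\partial\Omega)}$. Averaged over $k$ with weights $e^{-2\lambda_kt}/\lambda_k$, we plan to control the boundary traces via the heat kernel trace on the diagonal, $\sum_k e^{-2\lambda_k t}\phi_k(x)^2=p_{2t}(x,x)\lesssim 1/t$. Combining, the expected sum is
\begin{align*}
\lesssim\frac{1}{n^2}\Bigl(\sum_k e^{-2\lambda_k t}+\sqrt n\int_{\partial\Omega}\sum_k\frac{e^{-2\lambda_kt}}{\lambda_k}\phi_k^2\Bigr)\lesssim\frac1{n^2}\Bigl(\frac1t+\sqrt n\,\log\tfrac1t\Bigr)\lesssim\frac1n .
\end{align*}
The first piece uses Weyl's law.

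The deterministic bias term $\mathbb{E}\nu_n-c\,n\sigma|_\Omega$ is handled with the bulk density expansion $K_n(z,z)=\frac{n}{4}\Delta Q(z)+O(1)$, uniform on $\overline\Omega\subset\mathring S$. Together with the mismatch $N_\Omega-n\sigma(\Omega)=O_P(\sqrt n)$, this gives an $L^2$ density error of order $1/n$ or $1/\sqrt n$. Its $\dot H^{-1}$ norm, hence its contribution to $W_2$, is then $\lesssim n^{-1/2}$, which suffices.

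I expect the main obstacle to be the boundary contribution in the variance. The exact estimate of the $L^2(\partial\Omega)$ traces of Neumann eigenfunctions on a merely convex domain, weighted by $e^{-2\lambda_kt}/\lambda_k$, needs care. I would first try to avoid eigenfunctions altogether: apply the variance formula directly to the test functions $g=P_tu$ arising from the duality defining the $\dot H^{-1}$ norm, and use the gradient bound $\|\nabla P_t\|_{L^\infty}\lesssim t^{-1/2}$ for the Neumann semigroup on convex domains.
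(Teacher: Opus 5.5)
Your proof is correct in substance. It shares the paper's frame: Neumann heat smoothing on $\Omega$, Peyre's bound with the smoothed limit as reference measure, $t\asymp 1/n$, and Chebyshev on the event of too few points. The difference is how you bound $\Var\sum_i\phi_k(z_i)$.

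The paper works one eigenfunction at a time. It views the zero extension of $\phi_k$ as an element of $\dot H^{s_0}(\mathbb{R}^2)$ for some $s_0<\frac12$. McLean's extension result and Gagliardo--Nirenberg interpolation then give \eqref{eq:sobolev}, i.e.\ $\norm{\phi_k}^2_{\dot H^{s_0}}\lesssim\lambda_k^{s_0}$. Since $x^{1+s_0}e^{-x}\lesssim 1$, the kernel $n^2e^{-cn|z-w|^2}$ is bounded by $n^{1-s_0}$ times the Gagliardo weight. This gives $\Var\lesssim n^{1-s_0}\lambda_k^{s_0}$, which is fed into Corollary~\ref{cor:smoothing} with $a=1+s_0$, $b=s_0$. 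The jump of $\phi_k\carac_\Omega$ across $\partial\Omega$ is thus absorbed into fractional regularity. The same estimate is reused for Theorem~\ref{thm:hyperuniform}, where there is no kernel to split.

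You instead separate the pairs $(z,w)$. Interior pairs give the Dirichlet energy $\lambda_k$ by convexity; this alone is the case $a=2$, $b=1$ of the corollary. For pairs straddling $\partial\Omega$, you sum over $k$ before integrating, using $\sum_{k\ge1}\frac{e^{-2\lambda_kt}}{\lambda_k}\phi_k(z)^2=\int_{2t}^\infty\bigl(p_s(z,z)-\Vol(\Omega)^{-1}\bigr)\,ds\lesssim\log\frac1t$. This avoids fractional Sobolev theory. It also shows that $\partial\Omega$ adds only $O(n^{-3/2}\log n)$ to the expected smoothed $\dot H^{-1}$ norm squared, i.e.\ it is genuinely lower order. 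The cost is a uniform on-diagonal Neumann heat kernel bound up to the boundary (true on bounded convex domains). You also must apply Theorem~\ref{thm:smoothing} directly, since the boundary part no longer has the per-$k$ form Corollary~\ref{cor:smoothing} requires.

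Two details to tighten:
\begin{enumerate}
\item The per-$k$ trace bound $\sqrt n\norm{\phi_k}^2_{L^2(\partial\Omega)}$ misses an additive $O(\lambda_k)$, which matters once $\lambda_k\gtrsim n$. It is cleaner to keep the layer integral $n\int_\Omega\phi_k^2e^{-c\sqrt n\,d(z,\partial\Omega)}\,dA$ and apply the pointwise $\log\frac1t$ bound; this gives $\sqrt n\log n$ with no traces.
\item The estimates \eqref{eq:bulkbound} and \eqref{eq:distantzw} do not give a uniform Gaussian bound on $|K_n|^2$. They give a Gaussian bound up to an additive $O(1)$ for $|z-w|<a_1\delta_n$, and only $n^2e^{-\varepsilon\sqrt n|z-w|}$ beyond that. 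Exponential decay at scale $n^{-1/2}$ still suffices for both of your integrals.
\end{enumerate}
Also, you do not need $\Var N_\Omega=O(\sqrt n)$: the crude bound $\Var N_\Omega\le\mathbb{E}N_\Omega\lesssim n$ already handles both the bad event and the bias term.
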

    The connection between hyperuniform point processes and optimal transport has been deeply studied in recent years. For stationary point processes (the ones invariant by translations), the theory of Optimal Transport has been modified to work with infinite measures, see \cite{ERBAR2025110974} and references therein. More in the spirit of the results presented here, the paper \cite{hyperuniformityoptimaltransportpoint} also studies the limit of dilations, and proves an asymptotic similar to the following ones when $ \Omega = [0,1]^d $.
    \begin{theorem} \label{thm:generalthm}
        Let $\Xi$ be a homogeneous point process in $\mathbb{R}^d$ and $\Xi_L$ be a dilation of it by a factor of $L^{-\frac{1}{d}}$ as described in Section \ref{sec:pointprocesses}. Fix an open bounded convex set $\Omega$. We will further assume that for any function $f\in L^2(\Omega)$ the linear statistics satisfy
        \begin{align}\label{eq:varinequality}
                \Var \left( \sum_{x \in \Xi_L} f(x) \right) \lesssim L \norm{f}^2_{L^2(\Omega)}.
        \end{align}

        Let $N = \# \Xi_L \cap \Omega$. Then we have the following asymptotic:
        \begin{align*}
            \mathbb{E} W_2 \left( \frac{1}{N} \sum_{\Xi_L \cap \Omega} \delta_x, \frac{1}{\operatorname{Vol}(\Omega)} \operatorname{Vol}|_{\Omega} \right) \lesssim \left\{ \begin{matrix}
                \frac{\sqrt{\log L}}{\sqrt{L}} & d=2 \\[1em]
                \frac{1}{L^{\frac{1}{d}}} & d > 2
            \end{matrix}\right.
        \end{align*}
    \end{theorem}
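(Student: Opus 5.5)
The plan is to smooth the empirical measure with the Neumann heat semigroup on $\Omega$ and split via the triangle inequality. Write $\mu = \frac{1}{N}\sum_{x\in\Xi_L\cap\Omega}\delta_x$, $\sigma = \frac{1}{\operatorname{Vol}(\Omega)}\operatorname{Vol}|_\Omega$, and let $P_t$ be the Neumann heat semigroup, so that
\begin{align*}
P_t\mu = \frac{1}{\operatorname{Vol}(\Omega)} + \sum_{k\geq 1} e^{-\lambda_k t}\widehat{\mu}(k)\phi_k .
\end{align*}
Then
\begin{align*}
W_2(\mu,\sigma) \leq W_2(\mu, P_t\mu) + W_2(P_t\mu,\sigma).
\end{align*}
The mass is preserved because $\phi_0$ is constant and the Neumann condition holds. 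For the first term I would use the smoothing estimate from Section \ref{sec:smoothing}: on a convex domain, each point mass diffused for time $t$ is transported a mean-square distance $\lesssim \sqrt{t}$. Concretely, reflected Brownian motion started at $x$ gives a coupling, and $\mathbb{E}|B^{\mathrm{refl}}_t - x|^2 \lesssim t$, since convexity makes the reflection only push inward. Hence $W_2(\mu,P_t\mu)\lesssim\sqrt{t}$ deterministically.

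For the second term, $P_t\mu$ is absolutely continuous and $\sigma$ has density bounded below, so by \eqref{eq:dynamicWass} and the $H^{-1}$ version of Peyre's bound \eqref{eq:peyre},
\begin{align*}
W_2(P_t\mu,\sigma)^2 \lesssim \sum_{k\geq1}\frac{e^{-2\lambda_k t}}{\lambda_k}\,|\widehat{\mu}(k)|^2 .
\end{align*}
I prefer the $\dot H^{-1}$ form over the cruder $L^2$ bound because it gives the correct $\log$. Since $\widehat\sigma(k)=0$ for $k\geq1$, and $\phi_k\perp 1$, we have $\mathbb{E}\sum_{x\in\Xi_L\cap\Omega}\phi_k(x)=\rho_1\int_\Omega\phi_k=0$. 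Therefore
\begin{align*}
\mathbb{E}\Big|\sum_{x}\phi_k(x)\Big|^2=\Var\Big(\sum_x \phi_k\carac_\Omega(x)\Big)\lesssim L,
\end{align*}
using \eqref{eq:varinequality} with $\norm{\phi_k}_{L^2(\Omega)}=1$.

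The random normalization $1/N$ has to be handled. Here $N$ concentrates around $cL\operatorname{Vol}(\Omega)$, since $\Var N\lesssim L$ by \eqref{eq:varinequality} with $f=\carac_\Omega$. On the event $N\geq cL/2$ I bound $1/N^2\lesssim L^{-2}$. On the complementary event, which has probability $\lesssim 1/L$ by Chebyshev, I use the trivial bound $W_2\leq\operatorname{diam}\Omega$. The cost is $O(1/L)$ in the $W_2^2$ budget, or I can use Cauchy–Schwarz on $\mathbb{E}W_2$, and in either case it is negligible. Using Jensen, $\mathbb{E}W_2\leq(\mathbb{E}W_2^2)^{1/2}$, this gives
\begin{align*}
\mathbb{E}W_2(\mu,\sigma)\lesssim \sqrt{t}+\Big(\frac{1}{L}\sum_{k\geq1}\frac{e^{-2\lambda_kt}}{\lambda_k}\Big)^{1/2}+O(L^{-1/2}).
\end{align*}

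Next I would invoke Weyl's law for the Neumann Laplacian on a Lipschitz domain, $\#\{k:\lambda_k\leq\lambda\}\lesssim \lambda^{d/2}$ for $\lambda\geq\lambda_1$. Summing by parts,
\begin{align*}
\sum_{k\geq 1}\frac{e^{-2\lambda_kt}}{\lambda_k}\approx\int_{\lambda_1}^\infty \lambda^{d/2-2}e^{-2\lambda t}\,d\lambda,
\end{align*}
which is $\lesssim\log(1/t)$ when $d=2$ and $\lesssim t^{1-d/2}$ when $d>2$. Optimizing: for $d=2$, take $t=1/L$ to get $\sqrt{\log L/L}$. For $d>2$, balancing $t$ against $t^{1-d/2}/L$ gives $t=L^{-2/d}$ and the rate $L^{-1/d}$. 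In both cases the $O(L^{-1/2})$ normalization error is dominated.

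I expect the main obstacle to be the first step, the deterministic bound $W_2(\delta_x,P_t\delta_x)\lesssim\sqrt t$ uniformly up to the boundary. If it is not already available from Section \ref{sec:smoothing}, I would first try to get it from the Neumann heat kernel Gaussian upper bound $p_t(x,y)\lesssim t^{-d/2}e^{-c|x-y|^2/t}$ on convex domains, which gives $\int|x-y|^2p_t(x,y)\,dy\lesssim t$ directly. The second-term computation also requires \eqref{eq:dynamicWass} in its $\dot H^{-1}$ spectral form; this is just the eigen-expansion of the Neumann solution $u=\sum_{k\geq1}\lambda_k^{-1}\widehat{(\cdot)}(k)\phi_k$ together with the lower density bound of $\sigma$.
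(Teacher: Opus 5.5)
Your proposal is correct and is essentially the paper's proof. The paper applies the Neumann heat smoothing of Theorem \ref{thm:smoothing} through Corollary \ref{cor:smoothing} with $a=1$, $b=0$. Its displacement lemma $\int_\Omega |x-y|^2 P(t,x,y)\,dx \le 2dt$ is the same convexity argument you sketch via reflected Brownian motion, and since the uniform measure is invariant, smoothing it changes nothing. The paper also splits on the event $N<\frac12\mathbb{E}N$, which has probability $\lesssim 1/L$ and where $W_2\le\operatorname{diam}\Omega$. On the complement it bounds $\mathbb{E}|\widehat\mu(k)|^2\lesssim L^{-2}\Var\bigl(\sum\phi_k\bigr)\lesssim L^{-1}$, exactly as you do. Only cosmetic fixes are needed: the threshold should read $\frac12\mathbb{E}N=\frac{c}{2}L\operatorname{Vol}(\Omega)$, and the Weyl-law comparison of the spectral sum is an upper bound ($\lesssim$) rather than $\approx$.
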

    \begin{corollary}\label{thm:homogeneous}
        The assertions of Theorem \ref{thm:generalthm} hold for any homogeneous DPP satisfying the reproducing property \eqref{eq:reproducingprop}.
    \end{corollary}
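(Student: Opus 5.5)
The corollary will follow from Theorem \ref{thm:generalthm} once hypothesis \eqref{eq:varinequality} is checked, so the plan is to prove that inequality for any homogeneous DPP whose kernel satisfies the reproducing property \eqref{eq:reproducingprop}. I read the statement as concerning linear statistics of the restriction $f\carac_\Omega$, that is, $f$ is extended by zero outside $\Omega$. The dilation $\Xi_L$ of a homogeneous DPP with kernel $K$ is again a DPP. Its kernel is $K_L(x,y)=L\,K(L^{1/d}x,L^{1/d}y)$, the background measure is the same, and the reproducing property survives the change of variables. Homogeneity gives $K_L(x,x)=\rho_1^L=cL$ for a constant $c>0$. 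So it suffices to show, for a DPP with the reproducing property and constant first intensity $cL$, that $\Var\sum_{x\in\Xi_L}f(x)\le cL\norm{f}_{L^2}^2$.

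The key step is the variance formula from Section \ref{sec:pointprocesses}, which is valid under \eqref{eq:reproducingprop}:
\begin{align*}
\Var \sum_{x\in\Xi_L} f(x) = \frac12\iint |f(x)-f(y)|^2|K_L(x,y)|^2\,d\nu(x)d\nu(y).
\end{align*}
I will bound $|f(x)-f(y)|^2\le 2|f(x)|^2+2|f(y)|^2$. By the Hermitian symmetry $|K_L(x,y)|=|K_L(y,x)|$, the two resulting terms are equal, so the right-hand side is at most $2\iint |f(x)|^2|K_L(x,y)|^2\,d\nu(y)d\nu(x)$. Integrating in $y$ first and applying \eqref{eq:reproducingprop} turns the inner integral into $K_L(x,x)=cL$. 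This gives $\Var\le 2cL\norm{f}_{L^2}^2$. The norm only involves $\Omega$ because $f$ vanishes outside it, and $\nu$ is a fixed multiple of Lebesgue measure.

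The only point needing care is applying the variance formula and Tonelli's theorem to an arbitrary $f\in L^2(\Omega)$. I will prove the bound first for bounded compactly supported $f$, where everything is finite. For general $f$, I will pass to the limit by density: the linear statistic is linear in $f$, and the bound just proved, applied to differences $f_k-f_j$, shows that the statistics converge in $L^2(\mathbb{P})$. Once \eqref{eq:varinequality} holds, Theorem \ref{thm:generalthm} applies directly and gives the stated rates.
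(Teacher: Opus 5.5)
Your proposal is correct and follows the paper's proof. Both verify \eqref{eq:varinequality} by writing the variance with the projection-DPP formula, bounding $|f(x)-f(y)|^2\le 2|f(x)|^2+2|f(y)|^2$, and integrating out $y$ with the reproducing property to get $\lesssim\int|f|^2K_L(x,x)\approx L\norm{f}^2_{L^2}$. Your extra details do not change anything: the dilated kernel $L\,K(L^{1/d}x,L^{1/d}y)$ inheriting the reproducing property, and the density step, which Theorem \ref{thm:generalthm} does not need since it only uses $f=\carac_\Omega$ and the bounded eigenfunctions $\phi_k$. The factor $2cL$ you obtain instead of the announced $cL$ is also harmless, since only $\lesssim$ is required.
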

    The above corollary immediately applies to ensembles such as the Infinite Ginibre one. Theorem \ref{thm:generalthm} can also be applied to the zero set of the planar GAF, given that \eqref{eq:varinequality} holds thanks to \cite{nazarov2010fluctuationsrandomcomplexzeroes}. While this result is interesting in high dimensions, for $ d=2 $ it does not give an optimal result. In dimension $2$, using hyperuniformity, one can be more precise and obtain the following asymptotic.
    \begin{theorem}\label{thm:hyperuniform}
    Let $ \Xi $ be a hyperuniform point process on $ \mathbb{R}^2 $ of type $ I $, $ II $ or $ III $. Fix any open bounded convex set $ \Omega $, and let $ N = \# \Xi_L \cap \Omega $ be the number of points in the set. Then
    \begin{align*}
	    \mathbb{E} W_2 \left( \frac{1}{N} \sum_{\Xi_L \cap \Omega} \delta_x, \frac{1}{\operatorname{Vol}(\Omega)} \operatorname{Vol}|_{\Omega} \right) \lesssim \frac{1}{\sqrt{L}}. 
    \end{align*}
    
\end{theorem}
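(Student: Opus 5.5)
We follow the heat-smoothing scheme of Section~\ref{sec:smoothing}. Write $\mu_L = \frac{1}{N}\sum_{\Xi_L\cap\Omega}\delta_x$ and $\sigma = \frac{1}{\operatorname{Vol}(\Omega)}\operatorname{Vol}|_\Omega$, and let $P_t$ be the Neumann heat semigroup on $\Omega$. The triangle inequality gives
\begin{align*}
W_2(\mu_L,\sigma)\le W_2(\mu_L,P_t\mu_L)+W_2(P_t\mu_L,\sigma).
\end{align*}
The smoothing term is deterministic: $P_t\delta_x$ is obtained by reflected Brownian motion run for time $t$. Coupling each atom with its own heat kernel, and using convexity of $\Omega$ so that the reflected motion has no excess displacement, gives $W_2(\mu_L,P_t\mu_L)\lesssim\sqrt t$. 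We will choose $t\approx L^{-1}$, so this term is already $O(L^{-1/2})$.

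For the second term, $P_t\mu_L$ and $\sigma$ are absolutely continuous with the same mass, and the density of $\sigma$ is bounded below. We use the $\dot H^{-1}$ bound \eqref{eq:peyre} rather than \eqref{eq:dynamicWass}, which would lose a $\log$. In the Neumann eigenbasis this reads
\begin{align*}
W_2(P_t\mu_L,\sigma)^2\lesssim\sum_{k\ge1}\frac{e^{-2\lambda_k t}}{\lambda_k}\,|\widehat{\mu_L}(k)|^2 .
\end{align*}
To handle the random normalization, we replace $\frac1N$ by $\frac{1}{cL\operatorname{Vol}(\Omega)}$. Since $\widehat{\sigma}(k)=0$ for $k\ge1$, on the event $N\ge cL\operatorname{Vol}(\Omega)/2$ we have $|\widehat{\mu_L}(k)|\lesssim L^{-1}\bigl|\sum_{x}\phi_k(x)-\mathbb E\sum_x\phi_k(x)\bigr|$, because the mean of $\sum_x \phi_k(x)$ is $cL\int_\Omega\phi_k=0$. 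The complementary event has probability $\lesssim \Var(N)/L^2=o(L^{-1})$ by hyperuniformity, and $W_2\le\operatorname{diam}\Omega$, so it contributes negligibly after Cauchy--Schwarz. Jensen's inequality then reduces the problem to proving
\begin{align*}
\sum_{k\ge1}\frac{e^{-2\lambda_k t}}{\lambda_k}\,\Var\Big(\sum_{x\in\Xi_L}\phi_k\carac_\Omega(x)\Big)\lesssim L .
\end{align*}

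For the variance we use the spectral formula. Rather than estimating each $\phi_k$ separately, we group the terms: with $g_y(x)=\sum_k \frac{e^{-\lambda_k t}}{\sqrt{\lambda_k}}\phi_k(x)\phi_k(y)$, the sum above equals $\int_\Omega \Var\bigl(\sum_x g_y(x)\bigr)\,dy$. This equals
\begin{align*}
L\int_{\mathbb R^2}\int_\Omega |\widehat{g_y\carac_\Omega}(\xi)|^2\,dy\; s\bigl(|\xi|/\sqrt L\bigr)\,d\xi ,
\qquad s(w)\approx\min(|w|^\alpha,1).
\end{align*}
The heuristic is as follows. In the interior, $\widehat{g_y}(\xi)$ behaves like $e^{-4\pi^2 t|\xi|^2}/(2\pi|\xi|)$, so the integral is about
\begin{align*}
L\int \frac{e^{-8\pi^2t|\xi|^2}}{|\xi|^2}\min\bigl(|\xi|^\alpha L^{-\alpha/2},1\bigr)\,d\xi\approx L ,
\end{align*}
since the factor $|\xi|^{\alpha-2}$ is integrable near $0$ for $\alpha>0$. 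This is exactly where hyperuniformity removes the $\log L$ of Theorem~\ref{thm:generalthm}. The cutoff at scale $|\xi|\sim\sqrt L$ comes from $t\approx 1/L$, and the region $|\xi|\lesssim \sqrt L$ contributes $\int_{|\xi|<\sqrt L}|\xi|^{\alpha-2}L^{-\alpha/2}\,d\xi\approx 1$ times $L$.

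The main obstacle is making this rigorous, because $g_y\carac_\Omega$ is truncated at $\partial\Omega$. Its Fourier transform therefore has slowly decaying tails of order $|\xi|^{-3/2}$ coming from the jump, which does not vanish since Neumann functions do not vanish at the boundary. Those tails would be fatal for type~II and type~III processes paired with the bounded part of $s$. The approach we would try first is to split $|\widehat{g_y\carac_\Omega}|^2\le 2|\widehat{\chi g_y}|^2+2|\widehat{(1-\chi)g_y\carac_\Omega}|^2$, where $\chi$ is a smooth cutoff supported a distance $\delta$ inside $\Omega$. The first piece is treated by the interior heuristic. For the boundary layer piece we would use only the crude bound $\Var\lesssim L\norm{f}_{L^2}^2$ (valid since $s$ is bounded), together with $\int_\Omega\norm{g_y}^2_{L^2(\text{layer})}\,dy$. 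That quantity is $\sum_k e^{-2\lambda_kt}\lambda_k^{-1}\norm{\phi_k}^2_{L^2(\text{layer})}$, and by a local Weyl law it is about $\delta\log(1/t)$. Taking $\delta\approx 1/\log L$ makes it $O(1)$. The cost is that $\chi$ has gradients of size $\log L$, which perturbs the interior estimate only by logarithmic factors at frequencies $|\xi|\lesssim\log L$. Those factors are killed by the factor $|\xi|^\alpha L^{-\alpha/2}$ coming from $s$. Checking this balance is where we expect the real difficulty to lie.
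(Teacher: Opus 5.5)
\textbf{There is a genuine gap at the key step.} Your reduction is sound and matches the paper's scheme. You use heat smoothing with the convexity lemma for $W_2(\mu_L,P_t\mu_L)$, Peyre's $\dot H^{-1}$ bound in the Neumann eigenbasis, and the split on whether $N\ge\frac12\mathbb{E}N$. Everything then comes down to
\[
\sum_{k\ge1}\frac{e^{-2\lambda_k t}}{\lambda_k}\,\Var\Big(\sum_{x\in\Xi_L}\phi_k\carac_\Omega(x)\Big)\lesssim L,\qquad t\approx L^{-1}.
\]
You never establish this estimate, and it is the only place hyperuniformity enters. The interior computation is explicitly heuristic. Moreover, $g_y$ is essentially $(-\Delta_N)^{-1/2}P_t\delta_y$, which decays only like $|x-y|^{-1}$. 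So it is not localized near $y$, and $\widehat{\chi g_y}$ is not simply $e^{-4\pi^2t|\xi|^2}/(2\pi|\xi|)$. The local Weyl law in the boundary layer is asserted without proof. The trade-off between the $\log L$ gradients of $\chi$ and the factor coming from $s$ is left open by your own account.

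\textbf{The missing idea.} The boundary jump is harmless once you replace $s$ by a global power bound. Since $s$ is bounded and $s(w)\approx\min(|w|^\alpha,1)$, you have $s(w)\lesssim|w|^\beta$ for all $w$. Take $\beta=\alpha$ in type III and, say, $\beta=\frac12$ in types I and II; all that matters is $0<\beta<1$. The zero-extension of a function smooth on $\overline\Omega$ lies in $\dot H^{\beta/2}(\mathbb{R}^2)$ precisely because $\beta/2<\frac12$ (Lemma \ref{lemma:fracSobolev}). Interpolation then gives $\int|\widehat{\phi_k\carac_\Omega}(\xi)|^2|\xi|^\beta\,d\xi\lesssim\lambda_k^{\beta/2}$, which is \eqref{eq:sobolev}.

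Hence, for each $k$ separately, $\Var\big(\sum_x\phi_k\carac_\Omega(x)\big)\lesssim L^{1-\beta/2}\lambda_k^{\beta/2}$. The Weyl-law summation $\sum_k e^{-2\lambda_k t}\lambda_k^{\beta/2-1}\lesssim t^{-\beta/2}$ then gives exactly $L$ at $t=L^{-1}$. This is Corollary \ref{cor:smoothing} with $a=1+\beta/2$ and $b=\beta/2$, which yields $\gamma=\frac12$.

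\textbf{Your worry about the tails is unfounded.} You claim the $|\xi|^{-3/2}$ tails would be fatal against the bounded part of $s$; this is wrong. For $|\xi|\gtrsim\sqrt L$ that bounded part is dominated by $(|\xi|/\sqrt L)^\beta$, and a jump obstructs $H^{\sigma}$ only for $\sigma\ge\frac12$. So the high-frequency contribution fits inside the same $L^{1-\beta/2}\lambda_k^{\beta/2}$ budget. No grouping into $g_y$, no cutoff $\chi$, and no local Weyl law are needed.
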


\section{Proofs} \label{sec:proofs}

\subsection{Heat Smoothing inequality}\label{sec:smoothing}

The scheme is to apply a smoothing procedure to $\mu$ and $\nu$ using the Neumann heat equation to get absolutely continuous measures $\mu_t$ and $\nu_t$ \textit{which are close to the original ones} but can be better compared. 

This idea of smoothing an empirical measure using some kernel has actually been a staple technique for some time now. In dimension 1, the problem has been in the interval $[0,1]$ and the torus $\mathbb{T}^1$ for $W_p$ in the range $p \geq 2$. In higher dimensions, there have been similar studies for the unit cube, often identified as the $d$-dimensional torus, in different ranges of $p$. To name a few, it has been applied by Brown and Steinerberger \cite{brown_steinerberger_2020}, and by Bobkov and Ledoux \cite{bobkov_ledoux_2021}. This was generalized to compact Riemannian manifolds (without boundary) for the quadratic Wasserstein distance by Borda \cite{borda_2023}, with ramifications seen in \cite{borda2023riesz, arias2024equidistributionpointsharmonicensemble}. It is worth commenting that this approach has recently been generalized to $p \neq 2$ in \cite{borda2025smoothinginequalitiestransportmetrics}. 

Given a finite measure $u_0$ on $\Omega$ one may consider the heat equation with initial condition $u_0$ 
\begin{align*}
	\left.\begin{matrix}
    	\partial_t u = \Delta u & \text{in $\Omega$}\\
    	\frac{\partial u}{\partial n} = 0 & \text{on $\partial \Omega$}\\
	u_{t=0}=u_0 &
	\end{matrix}\right\}
\end{align*}  

The solution for $t>0$ can be expressed using the heat kernel $\displaystyle P(t, x, y) = \sum_{k\geq 0} e^{-\lambda_k t} \phi_k(x) \phi_k(y)$ as
\begin{align*}
    u_t(x) = \int_\Omega P(t, x, y) d u_0(y) = P_t[u_0](x).
\end{align*}

\begin{theorem} \label{thm:smoothing}
    Given $\mu, \nu$ measures on an open bounded convex domain $\Omega \subseteq \mathbb{R}^d$, with $\nu \geq c \Vol|_{\Omega}$ for some $c\geq 0$ and $\mu(\Omega) = \nu(\Omega)$ , then for any $t > 0$
    \begin{align*}
        W_2(\mu, \nu) \leq C_1 (dt)^{\frac{1}{2}} + \frac{2}{C_2(\nu, t)} \left( \sum_{k\geq 1} \frac{e^{-\lambda_k t}}{\lambda_k} | \widehat{\mu}(k) - \widehat{\nu}(k) |^2 \right)^{\frac{1}{2}}, 
    \end{align*}  
    where $C_1 = \mu(\Omega)^{\frac{1}{2}} + (\nu(\Omega) - c \Vol(\Omega))^{\frac{1}{2}}$, and $C_2(\nu, t) = \left( \inf_{x\in \Omega} \int P(\frac{t}{2}, x, y) d \nu(y) \right)^{\frac{1}{2}}$. 
\end{theorem}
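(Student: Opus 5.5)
Throughout, $\mu_s = P_s[\mu]$ and $\nu_s = P_s[\nu]$ denote the Neumann heat flows. The plan is to use the triangle inequality
\begin{align*}
W_2(\mu,\nu) \leq W_2(\mu,\mu_{t/2}) + W_2(\mu_{t/2},\nu_{t/2}) + W_2(\nu_{t/2},\nu)
\end{align*}
and to estimate the outer terms by a transport-along-the-heat-flow argument. The middle term will be handled by the weighted $\dot H^{-1}$ bound \eqref{eq:peyre}. The heat semigroup preserves mass thanks to the Neumann condition, since $\phi_0$ is constant and $\lambda_0=0$, so all the distances are finite. This splitting at time $t/2$ (rather than $t$) is suggested by the form of $C_2(\nu,t)$.

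\textbf{Step 1: smoothing cost.} For a single Dirac mass $\delta_x$, I would bound $W_2(\delta_x, P_s[\delta_x])^2 = \int |x-y|^2 P(s,x,y)\,dy$. To show this is at most $2ds$, consider $g(s)=\int |x-y|^2 P(s,x,y)\,dy$. Then $g'(s)=\int \Delta_y(|x-y|^2) P\,dy + \text{boundary term}$. After integrating by parts twice, the boundary term is $-\int_{\partial\Omega} \partial_n |x-y|^2 P\, d\sigma$. Convexity gives $(y-x)\cdot n(y)\ge 0$ for $y\in\partial\Omega$, so this term is $\leq 0$. Hence $g'\le 2d$ and $g(s)\le 2ds$. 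Coupling each point independently and using joint convexity of $W_2^2$ then gives $W_2(\mu,\mu_{t/2})\le \mu(\Omega)^{1/2}(dt)^{1/2}$.

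For $\nu$ I would split $\nu = c\Vol|_\Omega + (\nu - c\Vol|_\Omega)$. Lebesgue measure is invariant under the Neumann flow, so the first part costs nothing. By subadditivity of $W_2^2$ under sums of measures, $W_2(\nu,\nu_{t/2})\le (\nu(\Omega)-c\Vol(\Omega))^{1/2}(dt)^{1/2}$. Together the two outer terms give $C_1(dt)^{1/2}$, possibly up to a harmless constant.

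\textbf{Step 2: middle term.} I would apply \eqref{eq:peyre} with reference measure $\nu_{t/2}$, whose density is bounded below by $C_2(\nu,t)^2$. This gives
\begin{align*}
W_2(\mu_{t/2},\nu_{t/2})\le 2\norm{\mu_{t/2}-\nu_{t/2}}_{\dot H^{-1}(d\nu_{t/2})} \le \frac{2}{C_2}\norm{\mu_{t/2}-\nu_{t/2}}_{\dot H^{-1}(dx)}.
\end{align*}
Expanding in the Neumann eigenbasis, $\widehat{\mu_{t/2}}(k)=e^{-\lambda_k t/2}\widehat\mu(k)$. The zero mode vanishes by equal mass, so the Neumann solution satisfies $\norm{\nabla u}_{L^2}^2=\sum_{k\ge1}\lambda_k^{-1}e^{-\lambda_k t}|\widehat\mu(k)-\widehat\nu(k)|^2$. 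This is exactly the displayed sum.

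\textbf{Main obstacle.} I expect the main difficulty to be the rigorous form of Step 1. Two points need justification. First, the integration by parts requires regularity of $P$ up to the boundary of a merely convex (Lipschitz) domain. Second, the Dirac-wise bound must be turned into a genuine coupling for general $\mu$. If the direct computation is delicate, I would first approximate $\Omega$ by smooth convex domains. Alternatively, I would use the Benamou--Brenier formulation along the curve $s\mapsto\mu_s$ with velocity $-\nabla\log\mu_s$. This requires a Fisher-information bound $\int|\nabla \mu_s|^2/\mu_s\lesssim d\,\mu(\Omega)/s$, which holds on convex domains by Bakry--Émery, since convexity gives nonnegative curvature with the boundary term having the right sign.
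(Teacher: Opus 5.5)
Your proposal is correct and follows essentially the same route as the paper: the triangle inequality through the Neumann heat-smoothed measures, the second-moment bound $\int|x-y|^2P(s,x,y)\,dy\le 2ds$ from integration by parts and convexity, the plan $P\,d\mu\,dx$ for $\mu$ and the plan keeping $c\Vol|_\Omega$ fixed for $\nu$, and Peyre's inequality with the smoothed $\nu$ as reference measure followed by the eigenfunction expansion. The only cosmetic difference is that the paper splits at time $t$ and rescales at the end, whereas you split at $t/2$ directly; also, the outer terms give exactly $C_1(dt)^{1/2}$, with no extra constant.
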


\begin{proof}
    For $\mu, \nu$ two finite measures on $\Omega$ we can define the intermediate measures
    \begin{align*}
        \mu_t =  P_t[\mu], && \nu_t =  P_t[\nu].
    \end{align*}  
    As we are choosing Neumann conditions, the mass remains constant and the same in both measures. Therefore we will bound
    \begin{align*}
        W_2(\mu, \nu) \leq W_2(\mu, \mu_t) + W_2(\mu_t, \nu_t) + W_2(\nu_t, \nu).
    \end{align*}
    We will use two Lemmas to prove the result. For the difference between the original and the smoothed versions $W_2(\mu, \mu_t)$ and $W_2(\nu_t, \nu)$, we will exploit the properties of the heat kernel. For the distance $W_2(\mu_t, \nu_t)$ between the smoothed versions, we will bound it as a negative Sobolev norm, which then can be studied using Fourier expansion. Hence, the result follows from Lemma \ref{lemma:smoothclose} and Lemma \ref{lemma:smooth_fourier}.

    Note that we get a bound in terms of ``$2t$'', which then can be rescaled to get the exact inequality of the statement. 
\end{proof}

\begin{lemma}
    For any $0<t$ and $y\in \Omega$ 
    \begin{align*}
        \int_\Omega |x-y|^2 P(t, x, y) dx \leq 2 d t.
    \end{align*}  
\end{lemma}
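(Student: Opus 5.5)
Fix $y\in\Omega$ and set $F(t) = \int_\Omega |x-y|^2 P(t,x,y)\,dx$. By symmetry of the kernel, $x\mapsto P(t,x,y)$ solves the Neumann heat equation with initial datum $\delta_y$. Since $\Omega$ is bounded, $F(t)\to 0$ as $t\to 0^+$. The plan is to differentiate in $t$ and obtain the differential inequality $F'(t)\le 2d$, which integrates to $F(t)\le 2dt$.

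\textbf{Differentiating.} Write $g(x)=|x-y|^2$, so that $\Delta g = 2d$ and $\nabla g = 2(x-y)$. Using $\partial_t P = \Delta_x P$ and Green's second identity,
\begin{align*}
F'(t) = \int_\Omega g\,\Delta_x P\,dx = \int_\Omega P\,\Delta g\,dx + \int_{\partial\Omega}\Big( g\,\partial_n P - P\,\partial_n g\Big)\, dS.
\end{align*}
The term $g\,\partial_n P$ vanishes by the Neumann condition. Since the heat flow conserves mass, $\int_\Omega P(t,x,y)\,dx = 1$, so the first integral equals $2d$. This leaves
\begin{align*}
F'(t) = 2d - \int_{\partial\Omega} P(t,x,y)\, 2(x-y)\cdot n(x)\, dS(x).
\end{align*}

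\textbf{Boundary term.} Convexity enters here. Because $y\in\Omega$ and $\Omega$ is convex, $\Omega$ lies on the inner side of the supporting hyperplane at every boundary point $x$ where the outer normal $n(x)$ exists. Hence $(x-y)\cdot n(x)\ge 0$ almost everywhere on $\partial\Omega$; a convex domain is Lipschitz, so $n$ is defined $dS$-a.e. The heat kernel is nonnegative by the maximum principle. The boundary integral is therefore nonnegative, giving $F'(t)\le 2d$. Integrating from $0$ to $t$ yields $F(t)\le 2dt$.

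\textbf{Main obstacle.} The main difficulty is justifying this computation: applying Green's identity on a merely Lipschitz convex domain, and exchanging $\partial_t$ with the integral. I would handle both by approximation. First fix $t_0>0$ and integrate over $[t_0,t]$, where $P$ is smooth in $t$ and regular enough up to the boundary. Alternatively, avoid pointwise boundary values by using the weak formulation: for $\phi = g$, one has $\frac{d}{dt}\int g P = -\int \nabla g\cdot\nabla_x P$, and it remains to show $-\int 2(x-y)\cdot\nabla_x P\,dx \le 2d$. This last bound follows by integrating by parts once more, or by approximating $\Omega$ from inside by smooth convex domains, for which the kernels converge.

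Letting $t_0\to0$ uses $F(t_0)\to 0$. This holds because $P(t_0,\cdot,y)\,dx\rightharpoonup\delta_y$ weakly, which follows from the eigenfunction expansion, and $g$ is continuous and bounded on $\overline\Omega$.
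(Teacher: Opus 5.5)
Your proof is correct and follows the paper's route. You differentiate $F$, apply Green's second identity with the Neumann condition removing the $|x-y|^2\,\partial_n P$ term, use $\Delta |x-y|^2 = 2d$ and $\int_\Omega P(t,x,y)\,dx = 1$, sign the boundary term by convexity, and integrate from $F(0)=0$; beyond that, you make explicit what the paper leaves tacit, namely that $P\ge 0$ is needed to sign the boundary integral and that the differentiation and the limit $t_0\to 0$ require justification.
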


\begin{proof}
    Fix the point $y\in \Omega$ and define for $t > 0$ the function
    \begin{align*}
        F(t) = \int_\Omega |x-y|^2 P(t, x, y) dx.
    \end{align*}
    Differentiating it with respect to $t$ we get
    \begin{align*}
        F'(t) & = \int_\Omega |x-y|^2 \frac{\partial}{\partial t}P(t, x, y) dx = \int_\Omega |x-y|^2 \Delta_x P(t, x, y) dx \\
         & = \int_\Omega \Delta_x |x-y|^2 \;\; P(t, x, y) dx + \int_{\partial \Omega} |x-y|^2 \partial_n P(t, x, y) dx - \int_{\partial \Omega} \partial_n |x-y|^2 P(t, x, y) dx.
    \end{align*} 
    Notice that because of the Neumann conditions $\partial_n P(t, x, y) = 0$ on $\partial S$. On the other hand, $\partial_n |x-y|^2 = 2 (x - y) \cdot n(x)$ and $\Delta |x-y|^2 = 2d$. The convexity assumption on $\Omega$ implies $(x-y) \cdot n(x) \geq 0$ for any $x\in \partial \Omega$ and $y\in \Omega$, so we have
    \begin{align*}
        F'(t) \leq 2d \int_{\Omega} P(t, x, y) = 2d.
    \end{align*}
    As $P(0, x, y) = \delta_y(x)$ (in the sense of distributions), we can take $F(0) = 0$. Therefore, integrating $F(t) \leq 2d t$.   
\end{proof}

The above pointwise bound can be used to get bounds for the distance between a general measure and its regularization. 
\begin{lemma} \label{lemma:smoothclose}
    In the conditions of Theorem \ref{thm:smoothing}, the error introduced by the intermediate measures is bounded by $W_2(\mu, \mu_t) \leq (2dt \mu(\Omega))^{\frac{1}{2}}$ and $W_2(\nu, \nu_t) \leq (\nu(\Omega) - c \Vol(\Omega))^{\frac{1}{2}} (2dt)^{\frac{1}{2}}$. 
\end{lemma}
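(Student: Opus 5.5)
The plan is to build an explicit transport plan from the heat kernel and then use the pointwise moment bound of the preceding lemma.

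For the bound on $W_2(\mu,\mu_t)$, define the coupling
\begin{align*}
  d\pi(x,y) = P(t,x,y)\, dx\, d\mu(y) \quad \text{on } \Omega\times\Omega .
\end{align*}
Its marginals are checked as follows. By Neumann conservation of mass, $\int_\Omega P(t,x,y)\,dx = 1$, so the second marginal is $\mu$. By definition of $P_t$ and the symmetry of $P$, the first marginal is $\mu_t$. Plugging $\pi$ into the definition of $W_2$ and applying the previous lemma pointwise in $y$ gives
\begin{align*}
  W_2^2(\mu,\mu_t) \leq \int_\Omega \int_\Omega |x-y|^2 P(t,x,y)\,dx\,d\mu(y) \leq 2dt\,\mu(\Omega).
\end{align*}
Non-negativity of $P$ is needed here so that $\pi$ is a genuine measure; it follows from the maximum principle.

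For $\nu$ the plan is to exploit the uniform part. Write $\nu = c\Vol|_\Omega + \nu'$ with $\nu' = \nu - c\Vol|_\Omega \geq 0$, so that $\nu'(\Omega) = \nu(\Omega) - c\Vol(\Omega)$. Lebesgue measure on $\Omega$ is invariant under the Neumann heat flow, because $\int_\Omega P(t,x,y)\,dy = 1$ by symmetry. Hence $\nu_t = c\Vol|_\Omega + P_t[\nu']$.

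Now couple $c\Vol|_\Omega$ to itself by the identity, at zero cost, and couple $\nu'$ to $P_t[\nu']$ by the heat-kernel plan above. The sum of these two plans is a transport plan between $\nu$ and $\nu_t$. Its cost is at most $2dt\,\nu'(\Omega)$ by the same computation as for $\mu$, which gives $W_2(\nu,\nu_t) \leq (\nu(\Omega)-c\Vol(\Omega))^{1/2}(2dt)^{1/2}$.

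The main point needing care is the justification of the lemma's boundary computation, together with the claim that $\pi$ has the right marginals. The marginal claim uses the symmetry $P(t,x,y)=P(t,y,x)$ from the eigenfunction expansion and Tonelli for nonnegative integrands, so both steps are routine.
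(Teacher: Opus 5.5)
Your proof is correct and follows the paper's argument: you use the same heat-kernel coupling $P(t,x,y)\,dx\,d\mu(y)$ for $\mu$, and for $\nu$ the same plan $c\,d\lambda + P(t,x,y)(d\nu(y)-c\,dy)\,dx$, which keeps the uniform part $c\Vol|_\Omega$ fixed on the diagonal. You also state explicitly that $P_t$ leaves $c\Vol|_\Omega$ invariant (via $\int_\Omega P(t,x,y)\,dy=1$); the paper needs this to get the correct first marginal $\nu_t$ but leaves it implicit.
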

\begin{proof}
    Define the measure $\pi$ on $\Omega\times \Omega$ by
    \begin{align*}
        d \pi(x, y) = P(t, x, y) d \mu(y) dx.
    \end{align*}  
    The first marginal clearly gives us $d \mu_t (x)$, while integrating with respect to $x$ requires using that $\int P(t, x, y) dx = 1$ and hence the second marginal is $d \mu (y)$. Thus $\pi \in \Pi(\mu, \mu_t)$ is an appropriate transport plan which gives the bound
    \begin{align*}
        W_2(\mu , \mu_t)  \leq \left( \iint_{\Omega\times \Omega} |x-y|^2 d \pi(x, y) \right)^{\frac{1}{2}} & = \hspace{-0.15em} \left( \iint |x-y|^2 P(t, x, y) dx d \mu(y) \right)^\frac{1}{2} 
        \\
        &\leq\left( \int_\Omega 2d t \;d \mu(y) \right)^{\frac{1}{2}}\hspace{-0.3em} \leq (2dt \mu(\Omega))^{\frac{1}{2}} .
    \end{align*}  

    For the transport corresponding to $\nu$, as we further assume the hypothesis $\nu \geq c \Vol|_{\Omega} $, we can leave a $c$ amount of mass without moving, so we get $W_2(\nu, \nu_t) \leq (\nu(\Omega)-c \Vol(\Omega))^{\frac{1}{2}} (2dt)^{\frac{1}{2}}$. More precisely, we are considering the transport plan
    \begin{align*}
        d \pi(x, y) = c d \lambda(x, y) + P(t, x, y) \left( d \nu(y) - c dy \right) dx,
    \end{align*}
    where $\lambda$ is the singular probability measure characterized by $\lambda(A) = \Vol\left( \left\{ x\in \Omega \;:\; (x, x) \in A \right\} \right)$.

\end{proof}

\begin{lemma}\label{lemma:smooth_fourier}
    In the conditions of Theorem \ref{thm:smoothing}, the smoothed measures satisfy 
    \begin{align*}
        W_2(\mu_t, \nu_t) \leq \frac{2}{C_2(\nu, 2t)} \left( \sum_{k\geq 1} \frac{e^{-\lambda_k 2t}}{\lambda_k} | \widehat{\mu}(k) - \widehat{\nu}(k) |^2 \right)^{\frac{1}{2}}.
    \end{align*}
\end{lemma}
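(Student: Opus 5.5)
The approach is to apply Peyre's inequality \eqref{eq:peyre} to the pair $\mu_t,\nu_t$, with the reference measure taken to be the smoothed $\nu_t$. That gives
\begin{align*}
W_2(\mu_t,\nu_t)\le 2\norm{\mu_t-\nu_t}_{\dot H^{-1}(d\nu_t)}.
\end{align*}
Note that \eqref{eq:peyre} as stated weights by the first argument. Since $W_2$ is symmetric, we may put $\nu_t$ first.

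Next we bound the density of $\nu_t$ from below. For every $x\in\Omega$,
\begin{align*}
\nu_t(x)=\int P(t,x,y)\,d\nu(y)\ge C_2(\nu,2t)^2,
\end{align*}
because by definition $C_2(\nu,2t)^2=\inf_x\int P(t,x,y)\,d\nu(y)$; the argument $2t$ in $C_2$ is exactly halved inside. Hence $\norm{f}_{\dot H^1(d\nu_t)}\ge C_2(\nu,2t)\norm{\nabla f}_{L^2(\Omega)}$, and by duality
\begin{align*}
\norm{\mu_t-\nu_t}_{\dot H^{-1}(d\nu_t)}\le \frac{1}{C_2(\nu,2t)}\norm{\mu_t-\nu_t}_{\dot H^{-1}(dx)}.
\end{align*}
This is the same step used to derive \eqref{eq:dynamicWass}.

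It remains to compute the unweighted $\dot H^{-1}$ norm spectrally. The signed measure $\gamma=\mu_t-\nu_t$ has Neumann Fourier coefficients
\begin{align*}
\widehat\gamma(k)=e^{-\lambda_k t}\bigl(\widehat\mu(k)-\widehat\nu(k)\bigr).
\end{align*}
These follow from the eigenfunction expansion of $P$ and Fubini, using that the $\phi_k$ are continuous on $\overline\Omega$ so the pairing with the measures makes sense. Since $\mu(\Omega)=\nu(\Omega)$, we have $\widehat\gamma(0)=0$.

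For any $f\in H^1(\Omega)$, expand $f=\sum_k \widehat f(k)\phi_k$, where $\widehat f(k)=\int_\Omega f\phi_k\,dx$ (not to be confused with the Euclidean Fourier transform). Because $\gamma$ is smooth, it pairs with $f$ in $L^2$, so Parseval gives $\int f\,d\gamma=\sum_{k\ge1}\widehat f(k)\,\widehat\gamma(k)$. We also have $\norm{\nabla f}_{L^2}^2=\sum_k\lambda_k|\widehat f(k)|^2$, which holds for Neumann eigenfunctions since $H^1$ is the form domain. Cauchy--Schwarz then yields
\begin{align*}
\Bigl|\int f\,d\gamma\Bigr|\le \norm{\nabla f}_{L^2}\Bigl(\sum_{k\ge1}\frac{|\widehat\gamma(k)|^2}{\lambda_k}\Bigr)^{1/2}.
\end{align*}
Inserting $|\widehat\gamma(k)|^2=e^{-2\lambda_k t}|\widehat\mu(k)-\widehat\nu(k)|^2$ and chaining the three inequalities gives the statement.

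The main obstacle is justifying the spectral identities for general, possibly singular, measures $\mu,\nu$. The expansion must converge, and the series is finite only thanks to the factor $e^{-2\lambda_k t}$ together with $|\widehat\mu(k)|\le \mu(\Omega)\sup|\phi_k|$ and Weyl-type growth of $\lambda_k$. To avoid needing such growth bounds, I would work instead at the level of the smooth function $\gamma\in L^2$. Then $\sum_k|\widehat\gamma(k)|^2/\lambda_k\le \lambda_1^{-1}\norm{\gamma}_{L^2}^2<\infty$ directly, and the identification of $\widehat\gamma(k)$ only needs $P(t,\cdot,y)\in L^2$ with the semigroup property.

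A secondary point is that Peyre's theorem requires positive measures; $\mu_t$ and $\nu_t$ are positive because the heat kernel is positive.
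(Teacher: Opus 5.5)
Your proposal is correct and follows essentially the same route as the paper: Peyre's inequality with reference measure $\nu_t$, the pointwise lower bound $\nu_t \ge C_2(\nu,2t)^2$ to pass to the unweighted $\dot H^{1}$ seminorm, then the Neumann eigen-expansion $\int f\,d(\mu_t-\nu_t)=\sum_{k\ge1}e^{-\lambda_k t}\widehat f(k)\bigl(\widehat\mu(k)-\widehat\nu(k)\bigr)$ followed by Cauchy--Schwarz with weights $\lambda_k^{\pm1/2}$. Your extra remarks only add rigor to steps the paper treats briefly: swapping the arguments in \eqref{eq:peyre}, working with $f\in H^1(\Omega)$ via the form domain rather than a dense class of smooth Neumann functions, and using $\gamma\in L^2$ to justify convergence.
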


\begin{proof}
	The main tool is \eqref{eq:peyre}, as it relates the Wasserstein distance $W_2$ with negative Sobolev norms: 
    \begin{align*}
        W_2(\mu_t, \nu_t) \leq 2 \norm{\mu_t - \nu_t}_{\dot{H}^{-1}(\nu_t)} = 2 \sup \left\{ \left|\langle f, \mu_t-\nu_t \rangle \right| \;:\; \norm{f}_{\dot{H}^1 (\nu_t)} \leq 1 \right\}.
    \end{align*}
    
    Notice that the supremum is the norm of a linear operator, so it is enough to consider a dense subspace of functions. Given $f\in C^\infty(\Omega)$ with $\partial_n f = 0$ on the boundary, the norm can be expressed as
    \begin{align*}
        \norm{f}^2_{\dot{H}^1(\nu_t) } & = \int_{\Omega} |\nabla f(x)|^2 \int_\Omega P(t, x, y) d \nu(y) dm(x) \geq \hspace{-1.6pt} \left( \inf_{x\in \Omega}  \int P(t, x, y) d \nu(y) \right) \hspace{-1.6pt} \int_{\Omega} |\nabla f(x) |^2 dm(x) \\
        & = \left( c_2(\nu, 2t) \right)^2 \left[ \int_\Omega -\Delta f \cdot f + \int_{\partial \Omega} f \frac{\partial f}{\partial n} \right] = \left( C_2(\nu, 2t) \right)^2 \sum_{k>0} \lambda_k |\widehat{f}(k)|^2.
    \end{align*}
    
    So, if $\norm{f}^2_{\dot{H}^1(\nu_t) } \leq 1$ then $\displaystyle  \left( \sum_{k>0} \lambda_k |\widehat{f}(k)|^2 \right)^{\frac{1}{2}} \leq \frac{1}{C_2(\nu, 2t)}$. This implies
    
    \begin{align*}
        \left|\int_\Omega f d(\mu_t-\nu_t)\right| & = \left| \iint_{\Omega\times \Omega} f(x) P(t, x, y) d(\mu(y)-\nu(y)) dm(x)  \right| \\
        & = \left| \sum_{k\geq 1} e^{- \lambda_k t} \int_\Omega f(x) \phi_k(x) dm(x) \int \phi_k(y) (d \mu(y) - d \nu(y)) \right| \\
        &= \left| \sum_{k\geq 1} e^{- \lambda_k t} \widehat{f}(k) \left( \widehat{\mu}(k)- \widehat{\nu}(k) \right) \right| \\
        & \leq \left( \sum_{k\geq 1} \lambda_k |\widehat{f}(k)|^2 \right)^{\frac{1}{2}} \left( \sum_{k\geq 1} \frac{e^{-\lambda_k 2t}}{\lambda_k} | \widehat{\mu}(k) - \widehat{\nu}(k) |^2 \right)^{\frac{1}{2}} 
        \\ &\leq \frac{1}{C_2(\nu, 2t)} \left( \sum_{k\geq 1} \frac{e^{-\lambda_k 2t}}{\lambda_k} | \widehat{\mu}(k) - \widehat{\nu}(k) |^2 \right)^{\frac{1}{2}}.
    \end{align*}
\end{proof}

Notice that if there is \textit{some regularity}, i.e. $\nu \geq c \Vol|_\Omega $ for $c>0$, then $C_2( \nu , 2t) > c^{\frac{1}{2}}$ and can be bounded independently of $t$. Indeed, this is just saying that 
\begin{align*}
    C_2(\nu, 2t)^2 = \inf_{x\in \Omega} \int P(t, x, y) d \nu(y) = \inf_{x\in \Omega} \int P(t, x, y) \left( d \nu(y) - c dy \right) + c \geq c.
\end{align*} 

In order to apply this result easily in the setting of point processes, it may be useful to have the following corollary.
\begin{corollary} \label{cor:smoothing}
    Let $\mu_L$ be a sequence of measures on an convex domain $\Omega \subseteq  \mathbb{R}^d$ and $\nu_L$ another one for which we will assume $\nu_L \geq c dm|_{\Omega}$ for a constant $c>0$ independent of the index $L$ and $ \mu_L(\Omega) = \nu_L(\Omega)$. Assume the following estimates
    \begin{align*}
        \mathbb{E} \left( \left|\mu_L(\phi_k) - \nu_L(\phi_k)\right|^2 \right) \lesssim \frac{\lambda_k^b}{L^a} , && b+ \frac{d}{2} > 1.
    \end{align*}
    Then
    \begin{align*}
        \mathbb{E} W_2\left( \mu_L, \nu_L \right) \lesssim \frac{1}{L^\gamma}, && \gamma = \frac{a}{2b+d}.
    \end{align*}

    If instead we have $b+ \frac{d}{2} = 1$, we get $ \displaystyle \mathbb{E} W_2\left( \mu_L, \nu_L \right) \lesssim \frac{\sqrt{\log L}}{L^\gamma}$.
\end{corollary}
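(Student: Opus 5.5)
We apply Theorem \ref{thm:smoothing} to $\mu=\mu_L$, $\nu=\nu_L$, take expectations, and then optimise over $t$.

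\textbf{Step 1: the deterministic bound.} Since $\nu_L\geq c\,dm|_\Omega$ with $c>0$ independent of $L$, the remark after Lemma \ref{lemma:smooth_fourier} gives $C_2(\nu_L,t)\geq c^{1/2}$ uniformly in $t$ and $L$. The masses $\mu_L(\Omega)=\nu_L(\Omega)$ are bounded; they equal $1$ for the normalized empirical measures of interest, and in general we assume they are uniformly bounded. Hence $C_1\lesssim 1$, and Theorem \ref{thm:smoothing} gives
\begin{align*}
W_2(\mu_L,\nu_L)\lesssim t^{1/2}+\Big(\sum_{k\geq1}\frac{e^{-\lambda_k t}}{\lambda_k}|\widehat{\mu_L}(k)-\widehat{\nu_L}(k)|^2\Big)^{1/2}.
\end{align*}

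\textbf{Step 2: expectation and the spectral sum.} Take expectations and use Jensen ($\mathbb{E}\sqrt{X}\leq\sqrt{\mathbb{E}X}$) on the second term. Inserting the hypothesis gives
\begin{align*}
\mathbb{E}W_2(\mu_L,\nu_L)\lesssim t^{1/2}+L^{-a/2}\Big(\sum_{k\geq1}\lambda_k^{b-1}e^{-\lambda_k t}\Big)^{1/2}.
\end{align*}
The spectral sum is controlled by Weyl's law for the Neumann Laplacian on a bounded convex, hence Lipschitz, domain: the counting function satisfies $N(\lambda)=\#\{k:\lambda_k\leq\lambda\}\lesssim 1+\lambda^{d/2}$, and $\lambda_1>0$ by Payne--Weinberger. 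Write the sum as a Stieltjes integral $\int_{\lambda_1}^\infty \lambda^{b-1}e^{-\lambda t}\,dN(\lambda)$ and integrate by parts, or split dyadically into shells $\lambda\in[2^j,2^{j+1})$. Either way, for $0<t\leq1$,
\begin{align*}
\sum_{k\geq1}\lambda_k^{b-1}e^{-\lambda_k t}\lesssim\int_{\lambda_1}^\infty\lambda^{b-1+d/2-1}e^{-\lambda t}\,d\lambda+O(1).
\end{align*}
If $b+\frac d2>1$, the exponent $b+\frac d2-2$ exceeds $-1$, and the substitution $s=\lambda t$ shows the integral is $\lesssim t^{-(b+d/2-1)}$. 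If $b+\frac d2=1$, the integrand is $\lambda^{-1}e^{-\lambda t}$, and the integral is $\lesssim\log(1/t)$. One point needs care: when $b<1$, the boundary term from integration by parts is evaluated at $\lambda_1>0$, so it stays bounded.

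\textbf{Step 3: optimisation.} In the case $b+\frac d2>1$ we need
\begin{align*}
t^{1/2}+L^{-a/2}t^{-(b+d/2-1)/2}.
\end{align*}
Equating the two terms gives $t^{(2b+d)/4}=L^{-a/2}$, so $t=L^{-2a/(2b+d)}$, and both terms equal $L^{-\gamma}$ with $\gamma=\frac{a}{2b+d}$. In the borderline case, where $2b+d=2$ and so $\gamma=a/2$, we need
\begin{align*}
t^{1/2}+L^{-a/2}\sqrt{\log(1/t)}.
\end{align*}
Choosing $t=L^{-a}$ gives $\lesssim L^{-a/2}\sqrt{\log L}$, as claimed.

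The main obstacle is Step 2. It requires a Weyl-type upper bound on the Neumann eigenvalue counting function that holds uniformly for convex domains and is valid at every scale, not just asymptotically, since we sum over all $k$. It also requires tracking the case $b<1$, where small eigenvalues carry negative powers; this is handled by $\lambda_1>0$. For Lipschitz domains the bound $N(\lambda)\lesssim1+\lambda^{d/2}$ is classical and can be quoted. Alternatively, it can be derived from the on-diagonal heat kernel bound $\operatorname{tr}P(t)\lesssim|\Omega|t^{-d/2}$ for $t\leq1$, which holds on convex sets.
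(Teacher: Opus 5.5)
Your proposal is correct and follows essentially the same route as the paper. Both arguments apply Theorem \ref{thm:smoothing} with $C_2(\nu_L,t)\geq c^{1/2}$, take expectations via Jensen, bound $\sum_k \lambda_k^{b-1}e^{-\lambda_k t}$ by Stieltjes integration by parts against the Weyl counting function (using $\lambda_1>0$ to obtain the $\log(1/t)$ in the borderline case), and choose $t$ as a power of $L$. Your explicit remarks that $C_1\lesssim 1$ needs uniformly bounded masses, and that only the non-asymptotic bound $N(\lambda)\lesssim 1+\lambda^{d/2}$ is required, make precise points the paper leaves implicit.
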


As the behaviour of Weyl's law for Neumann eigenvalues in a compact set of the plane is the same as in the case of compact manifolds without boundary, Corollary \ref{cor:smoothing} is the same argument as \cite[Lemma 7]{arias2024equidistributionpointsharmonicensemble}, based on the ideas of \cite{borda2023riesz}. The proof is included below for the sake of completeness. 
\begin{proof}[Proof of Corollary \ref{cor:smoothing}]
    Let's start by adding the expectation to the inequality of Theorem \ref{thm:smoothing}. After noticing that $\sqrt{\mathbb{E} f^2}$ is an $L^2$ norm with respect to the first intensity, we can apply Jensen and the triangle inequalities to get
    \begin{align}\label{eq:smoothingExpectation}
        \mathbb{E} W_2(\mu_L, \nu_L) \leq \sqrt{\mathbb{E} W_2^2(\mu_L, \nu_L)} \leq C_1 (dt)^{\frac{1}{2}} + \frac{2}{C_2(\nu_L, t)} \left( \sum_{k\geq 1} \frac{e^{-\lambda_k t}}{\lambda_k} \mathbb{E} | \widehat{\mu}_L(k) - \widehat{\nu}_L(k) |^2 \right)^{\frac{1}{2}}.
    \end{align}
    
    Now we claim that $\displaystyle \sum_{m=1}^{\infty} e^{\lambda_m t} \lambda_m^A \lesssim t^{- \left( A + \frac{d}{2} \right)}$ for $A > - \frac{d}{2}$. 

    To compute the series we define the following function and bound it using Weyl's law:
    \begin{align*}
        F(x) = \left|\left\{ m\in \mathbb{N} \;:\; \lambda_m \leq x \right\}\right| = C_d V_M x^{\frac{d}{2}} + O(x^{ \frac{d-1}{2}}) \lesssim x^{\frac{d}{2}}.
    \end{align*}
    Then the series can be bound as
    {\small
    \begin{align*}
        &\sum_{m=1}^{\infty} e^{-\lambda_m t} \lambda_m^A  = \int_0^\infty \hspace{-4pt} e^{-xt} x^A \;dF(x) = - \int_0^\infty\hspace{-3pt} F(x) \left( e^{-tx} x^A \right)' dx  = \int_0^\infty\hspace{-3pt} F(x) \left( t e^{-tx}x^A - e^{-tx} A x^{A-1} \right) dx, \\
        &I_1  = \int_0^\infty \hspace{-4pt} F(x) t e^{-tx} x^A dx \lesssim t \int_0^\infty \hspace{-4pt} e^{-tx} x^{A + \frac{d}{2}} \;dx = \int_0^\infty e^{-y} t^{- \left( A + \frac{d}{2}\right)} y^{A+\frac{d}{2}} \;dy = t^{-\left( A + \frac{d}{2} \right)} \Gamma\left( A+\frac{d}{2}+1 \right), \\
        & I_2 = \int_0^\infty F(x) e^{-tx} x^{A-1} \;dx \lesssim t^{-\left( A+\frac{d}{2} \right)} \int_0^\infty e^{-y} y^{A-1+\frac{d}{2}} \leq t^{-\left( A + \frac{d}{2} \right)} \Gamma\left(  A+ \frac{d}{2} \right).
    \end{align*}}
    This can be applied to the series from \eqref{eq:smoothingExpectation} with $A=b-1$. As $C_2$ is bounded below by assumption, we have
    \begin{align*}
        \mathbb{E}W_2\left( \mu_L, \nu_L \right) & 
        \lesssim t^{\frac{1}{2}} + \left( L^{-a} t^{-\left( b-1+\frac{d}{2} \right)} \right)^\frac{1}{2} 
        \underset{t=L^{-\alpha}}{=}  (L^{-\alpha})^\frac{1}{2} + \left( L^{-a+ \alpha \left( b-1+\frac{d}{2} \right)} \right)^\frac{1}{2}.
    \end{align*}
	and, optimizing the exponent in terms of $\alpha$ we get the exponent $\gamma = \frac{a}{2b+d}$.
	
	For the case when $b+ \frac{d}{2} = 1$, a little more care is needed, as the Gamma integrals do not converge. The first observation is that, because $F(x)=0$ for $0 \leq x \leq \lambda_1$ we can eliminate this part of the integrals and bound them with 
    \begin{align*}
        \sum_{m=1}^\infty e^{-t \lambda_m} \lambda_m^{b-1} &  \lesssim \int_{\lambda_1}^{\infty}F(x) e^{-tx} \left( t x^{b-1} + x^{b-2} \right) dx \lesssim \int_{\lambda_1}^{\infty} x^{\frac{d}{2}} e^{-tx} \left( t x^{b-1} + x^{b-2} \right) dx \\
        & = \int_{\lambda_1}^\infty t e^{-tx} + e^{-tx} x^{-1} dx = \int_{\lambda_1}^\infty e^{-y} (1+y^{-1}) dy \leq 1-\log \lambda_1 t \lesssim 1 - \log t.
    \end{align*}
	We can input this in the smoothing inequality, and after substituting $t=L^{-\alpha}$, we get the bound
    \begin{align*}
        \mathbb{E} W_2\left( \mu_L, \nu_L \right) \lesssim t^{\frac{1}{2}} + L^{ \frac{-a}{2}} \left( 1- \log t \right)^{\frac{1}{2}} = L^{\frac{-\alpha}{2}} + L^{ \frac{-a}{2}} \left( 1+ \alpha \log L \right)^{\frac{1}{2}} \lesssim L^{ \frac{-a}{2}} \sqrt{\log L}.
    \end{align*}
    which is the one appearing in the statement.
\end{proof}

\subsection{Fractional Sobolev spaces}
Let the fractional Sobolev space $ H^s(\Omega) = \left\{ f\in L^2(\Omega) \;:\; \norm{f}_{H^s(\Omega)} < \infty \right\} $ be characterized by the seminorm
\begin{align*}
	\norm{f}^2_{H^s(\Omega)} = \int |\widehat{f}(\xi)|^2 |\xi|^{2s} d \xi, \qquad s> 0.
\end{align*}
We can also consider the equivalent Gagliardo seminorm:
\begin{align*}
	\norm{f}_{\dot{H}^s(\Omega)}^2 = \iint_{\Omega \times \Omega} \frac{\left| f(x) - f(y) \right|^2 }{|x-y|^{2+ 2s}} dxdy \approx \norm{f}^2_{H^s(\Omega)}, \qquad 0 < s < 1. 
\end{align*}
 Our focus is the study of functions smooth on a convex set $ \Omega \subset \mathbb{R}^2 $ (and hence Lipschitz), that are extended by $ 0 $ outside. It is well-known that they are not in the Sobolev space $ H^1(\mathbb{R}^2) $, as the sharp jump makes the derivative not a function, but rather a measure. Another lesser known fact is that they are in the fractional spaces $ H^s(\mathbb{R}^2) $ when $ s < \frac{1}{2} $. 
 \begin{lemma}\label{lemma:fracSobolev}
	 Let $ \Omega $ be a Lipschitz domain and $ 0 < s < \frac{1}{2}  $. For any function $ f\in C^\infty(\Omega) \cap L^1(\mathbb{R}^2) \cap L^2(\mathbb{R}^2)$ supported on $ \overline{\Omega} $ we have $ \norm{f}_{\dot{H}^s(\mathbb{R}^2)} \leq C \norm{f}_{H^s(\Omega)}  $, with a constant $ C $ depending on $ s $ and $ \Omega $ but not $ f $.
 \end{lemma}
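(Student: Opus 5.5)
The idea is to compare the Gagliardo seminorm of the zero extension over $\mathbb{R}^2\times\mathbb{R}^2$ with the seminorm over $\Omega\times\Omega$. The difference is a boundary term, and I would control it with a fractional Hardy inequality, which holds exactly because $s<\frac12$.

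One caveat comes first. The constant function $f=\carac_\Omega$ has zero seminorm on $\Omega$ but a positive seminorm on $\mathbb{R}^2$, so the right-hand side must contain the $L^2$ part. I will therefore read $\norm{f}_{H^s(\Omega)}$ as the full norm, $\norm{f}^2_{L^2(\Omega)}+\iint_{\Omega\times\Omega}\frac{|f(x)-f(y)|^2}{|x-y|^{2+2s}}dxdy$.

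\textbf{Step 1 (splitting).} Since $f=0$ off $\overline{\Omega}$, split $\mathbb{R}^2\times\mathbb{R}^2$ into four regions:
\begin{align*}
\iint_{\mathbb{R}^2\times\mathbb{R}^2}\frac{|f(x)-f(y)|^2}{|x-y|^{2+2s}}dxdy=\iint_{\Omega\times\Omega}\frac{|f(x)-f(y)|^2}{|x-y|^{2+2s}}dxdy+2\int_\Omega|f(x)|^2\int_{\mathbb{R}^2\setminus\Omega}\frac{dy}{|x-y|^{2+2s}}\,dx .
\end{align*}
The region $\Omega^c\times\Omega^c$ contributes nothing, and the two mixed regions give the second term. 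For $x\in\Omega$ write $\delta(x)=\operatorname{dist}(x,\partial\Omega)$. The complement $\mathbb{R}^2\setminus\Omega$ lies outside $B(x,\delta(x))$, so the inner integral is at most $\int_{|z|>\delta(x)}|z|^{-2-2s}dz=C_s\,\delta(x)^{-2s}$. The whole problem therefore reduces to the fractional Hardy inequality
\begin{align*}
\int_\Omega \frac{|f(x)|^2}{\delta(x)^{2s}}dx\lesssim \norm{f}^2_{H^s(\Omega)},\qquad 0<s<\tfrac12 .
\end{align*}

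\textbf{Step 2 (localization).} Cover $\partial\Omega$ by finitely many charts in which $\Omega$ is the region above a Lipschitz graph, and take a subordinate partition of unity $\{\chi_j\}$. The interior piece has $\delta\gtrsim 1$, so it is bounded by $\norm{f}_{L^2}^2$. Multiplying by a smooth cutoff $\chi_j$ changes the $H^s(\Omega)$ norm by at most a constant times the full norm, using the standard estimate $|\chi(x)-\chi(y)|\lesssim|x-y|$ together with the integrability of $|x-y|^{-2s}$ near the diagonal.

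In each chart, the bi-Lipschitz map $(x_1,x_2)\mapsto(x_1,x_2-\psi(x_1))$ sends $\Omega$ to a half-plane. It distorts $\delta$, $|x-y|$ and volumes by bounded factors, so it suffices to prove the inequality on $\{x_2>0\}$ with $\delta=x_2$. By Fubini it is then enough to prove the one-dimensional version on each vertical line, bounding the line seminorm by the planar one. That comparison is again standard: average over a cone, or simply apply the one-dimensional inequality slice-by-slice to the planar Gagliardo integral restricted to pairs of points on a common vertical line. Either way the Gagliardo kernel exponents need care.

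\textbf{Step 3 (1D Hardy, the main obstacle).} The goal is
\begin{align*}
\int_0^1\frac{|g(t)|^2}{t^{2s}}dt\lesssim\int_0^1\!\!\int_0^1\frac{|g(t)-g(r)|^2}{|t-r|^{1+2s}}dtdr+\int_0^1|g|^2 ,
\end{align*}
and I would prove it with dyadic averages. Let $a_k$ denote the average of $g$ over $I_k=(2^{-k-1},2^{-k})$. For $t\in I_k$ write $|g(t)|^2\le2|g(t)-a_k|^2+2|a_k|^2$.

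The first term is at most $2^{k}\int_{I_k}|g(t)-g(r)|^2dr$. Multiplied by $t^{-2s}\approx2^{2ks}$, this is dominated by the local Gagliardo integral, since $|t-r|\le 2^{-k}$ on $I_k\times I_k$.

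For the averages, telescope $a_k=a_0+\sum_{j<k}(a_{j+1}-a_j)$. Here $|a_{j+1}-a_j|^2\lesssim 2^{-2js}E_j$, where $E_j$ is the Gagliardo energy on $(I_j\cup I_{j+1})^2$. The total weight is then $\sum_k 2^{-k}2^{2ks}|a_k|^2$, and the factor $2^{k(2s-1)}$ is summable exactly when $s<\frac12$. A Cauchy–Schwarz argument with geometric weights, i.e.\ a discrete Hardy inequality, then bounds this sum by $\sum_j E_j+|a_0|^2$, and $|a_0|^2\lesssim\norm{g}^2_{L^2}$.

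This bookkeeping is where $s<\frac12$ is genuinely used, and it is the step I expect to require the most care. Putting Steps 1–3 together yields $\norm{f}_{\dot{H}^s(\mathbb{R}^2)}\le C(s,\Omega)\norm{f}_{H^s(\Omega)}$.
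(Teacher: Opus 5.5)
Your argument is correct in outline, but it takes a different route from the paper. The paper's proof is only a citation: it invokes McLean's results that, for a Lipschitz domain and $|s|<\frac12$, extension by zero is bounded from $H^s(\Omega)$ into $H^s(\mathbb{R}^2)$, together with the equivalence of the Gagliardo and Fourier seminorms.

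What you propose is essentially the classical proof of that cited fact. You reduce zero-extension to a fractional Hardy inequality, then localize and prove a one-dimensional Hardy inequality. The citation buys brevity and generality; your argument is self-contained and shows exactly where $s<\frac12$ enters.

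Your caveat is right and worth keeping. With only the Gagliardo seminorm over $\Omega\times\Omega$ on the right, $f=\carac_\Omega$ is a counterexample, so the right-hand side must be the full norm. That is what McLean's theorem gives. It is also what the paper actually uses, since the next lemma bounds the full $W^{s,2}(\Omega)$ norm of $\phi_k$ for $k\ge1$.

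Two local points need repair.

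\textbf{The increment bound in Step 3 is off by a factor $2^{j}$.} Since $|I_j|\,|I_{j+1}|\approx 2^{-2j}$ and $|t-r|\le 2^{-j}$ on $I_j\times I_{j+1}$,
\begin{align*}
|a_{j+1}-a_j|^2\le \frac{1}{|I_j|\,|I_{j+1}|}\iint_{I_j\times I_{j+1}}|g(t)-g(r)|^2\,dt\,dr\lesssim 2^{2j}\,2^{-j(1+2s)}E_j=2^{j(1-2s)}E_j .
\end{align*}
Your bound $2^{-2js}E_j$ is false: for $g$ with a unit jump at $2^{-j-1}$ one has $|a_{j+1}-a_j|=1$ while $E_j\approx 2^{-j(1-2s)}$. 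Set $\beta=1-2s>0$ and use the correct bound. Cauchy--Schwarz gives $\bigl(\sum_{j<k}2^{j\beta/2}E_j^{1/2}\bigr)^2\le\bigl(\sum_{j<k}2^{j\beta/2}\bigr)\bigl(\sum_{j<k}2^{j\beta/2}E_j\bigr)\lesssim 2^{k\beta/2}\sum_{j<k}2^{j\beta/2}E_j$. Swapping the sums then yields
\begin{align*}
\sum_{k} 2^{-k\beta}|a_k|^2\lesssim |a_0|^2+\sum_{j} 2^{j\beta/2}E_j\sum_{k>j}2^{-k\beta/2}\lesssim |a_0|^2+\sum_{j} E_j .
\end{align*}
So the discrete Hardy inequality you mention is genuinely needed; with your exponent it would be superfluous, which signals the slip. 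The condition $1-2s>0$ is used in both geometric sums.

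\textbf{The slice comparison in Step 2 needs the averaging argument.} The option of restricting the planar Gagliardo integral to pairs of points on a common vertical line is not meaningful, because that set of pairs is null in $\mathbb{R}^2\times\mathbb{R}^2$. Instead, for $y=x+he_2$, average $|g(x)-g(y)|^2\le 2|g(x)-g(z)|^2+2|g(z)-g(y)|^2$ over $z$ in the disc of radius $|h|/4$ about the midpoint of $x$ and $y$.
\begin{itemize}
\item On that disc, $|x-z|\approx|y-z|\approx|h|$, so $|h|^{-1-2s}\,|h|^{-2}\approx |h|^{-1}|x-z|^{-2-2s}$.
\item For fixed $(x,z)$, only $|h|\in\bigl(\tfrac43|x-z|,\,4|x-z|\bigr)$ contributes, so $\int dh/|h|$ over that range is $O(1)$.
\item The discs stay inside the flattened chart. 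Vertically they lie between $x$ and $y$. Horizontally, $\chi_j f$ vanishes near the lateral sides, so it suffices to choose the chart box wide compared with its height, or to use a one-sided sector instead of a disc.
\end{itemize}
With these two repairs, Steps 1--3 prove the lemma in its full-norm form.
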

 \begin{proof}
	 This lemma in this exact form is a consequence of \cite[Theorem 3.32]{mclean_2000}, where the $ L^1 \cap L^2 $ hypothesis is used, and the equivalence between the fractional Sobolev seminorms. See also \cite[Theorems 3.33 and 3.40]{mclean_2000}.
 \end{proof}
 This result tells us that, when considering the eigenfunctions $ \phi_k $, it might be useful to obtain fractional Sobolev seminorms, as they will be finite. For this, notice that these seminorms can be bounded in terms of the eigenvalue.
\begin{lemma}
	Let $ \phi $ be a Neumann eigenfunction with eigenvalue $ \lambda $. Then we have $ \norm{f}^2_{\dot{H}^{s}(\Omega)} \lesssim \lambda^s $, with the constant depending on $ \Omega $ only.
\end{lemma}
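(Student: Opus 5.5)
We want $\norm{\phi}^2_{\dot{H}^s(\Omega)} \lesssim \lambda^s$ for $0<s<1$, where the seminorm is the Gagliardo one on $\Omega\times\Omega$. The plan is to interpolate between the trivial $L^2$ bound ($s=0$) and the $H^1$ bound ($s=1$).

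The endpoint data come directly from the eigenfunction. We have $\norm{\phi}_{L^2(\Omega)}=1$. Integrating by parts with the Neumann condition gives
\begin{align*}
\int_\Omega |\nabla \phi|^2 = \int_\Omega (-\Delta\phi)\,\phi = \lambda .
\end{align*}
So $\phi$ has $L^2$ norm $1$ and $\dot H^1$ seminorm $\lambda^{1/2}$. The task is to show that the Gagliardo seminorm of order $s$ is controlled by $\norm{\phi}_{L^2}^{2(1-s)}\norm{\nabla\phi}_{L^2}^{2s}$, up to a constant depending only on $\Omega$.

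To do this concretely, split the double integral defining $\norm{\phi}^2_{\dot H^s(\Omega)}$ according to whether $|x-y|<r$ or $|x-y|\ge r$, for a parameter $r>0$ to be chosen.

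\textbf{Far region.} Use $|\phi(x)-\phi(y)|^2\le 2|\phi(x)|^2+2|\phi(y)|^2$. By symmetry this part is at most
\begin{align*}
4\int_\Omega|\phi(x)|^2\int_{|h|\ge r}|h|^{-2-2s}\,dh\,dx \lesssim r^{-2s}.
\end{align*}
Here $d=2$; in general the exponent $2+2s$ should be $d+2s$, and the computation is identical.

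\textbf{Near region.} Here convexity of $\Omega$ is used: the segment from $y$ to $x$ lies in $\Omega$. Writing $x=y+h$,
\begin{align*}
|\phi(y+h)-\phi(y)|^2\le |h|^2\int_0^1|\nabla\phi(y+\tau h)|^2\,d\tau .
\end{align*}
Integrate over pairs with $y$ and $y+h$ both in $\Omega$. For fixed $h$ and $\tau$, the change of variables $z=y+\tau h$ keeps $z\in\Omega$, so the integral over $y$ is at most $\norm{\nabla\phi}^2_{L^2(\Omega)}$. The near part is therefore bounded by
\begin{align*}
\lambda\int_{|h|<r}|h|^{-2s}\,dh\lesssim \lambda\, r^{2-2s},
\end{align*}
which converges because $s<1$.

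Choosing $r=\lambda^{-1/2}$ balances the two terms, and both become $\lambda^{s}$. This choice is admissible for all $k\ge1$ since $\lambda\ge\lambda_1>0$, and it gives $\norm{\phi}^2_{\dot H^s(\Omega)}\lesssim\lambda^s$. The constants depend only on $s$ and the dimension; $\Omega$ enters only through convexity and through the lower bound $\lambda_1$.

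The only delicate point is justifying the segment estimate up to the boundary. Since $\phi\in C^\infty(\Omega)\cap H^1(\Omega)$, one can apply the estimate on compactly contained convex subdomains and pass to the limit, or argue by density of smooth functions in $H^1$ of a convex (hence Lipschitz) domain. If one prefers to avoid convexity, the alternative is to use an $H^1$ extension operator for Lipschitz domains and then the Fourier characterization with interpolation, $\norm{\phi}_{H^s}\le\norm{\phi}_{L^2}^{1-s}\norm{\phi}_{H^1}^{s}$. That route costs a constant depending on $\Omega$ but gives the same bound, since $\norm{\phi}^2_{H^1(\Omega)}=1+\lambda\lesssim\lambda$.
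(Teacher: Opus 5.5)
Your proof is correct, but it takes a different route from the paper.

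\textbf{The paper's route.} The paper treats the interpolation as a black box. It quotes the fractional Gagliardo--Nirenberg inequality of Brezis--Mironescu for inhomogeneous norms, $\norm{\phi}^2_{W^{s,2}(\Omega)}\lesssim\norm{\phi}^{2-2s}_{L^2(\Omega)}\norm{\phi}^{2s}_{W^{1,2}(\Omega)}$. It then uses $\norm{\phi}_{W^{1,2}}=1+\lambda^{1/2}\approx\lambda^{1/2}$, which needs $\lambda\ge\lambda_1>0$ to absorb the $L^2$ term.

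\textbf{Your route.} You prove the interpolation by hand, directly for the homogeneous Gagliardo seminorm, using the near/far splitting. Convexity gives the segment estimate and the translation change of variables, and the bound $C_s(\lambda r^{2-2s}+r^{-2s})$ with $r=\lambda^{-1/2}$ is right.

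\textbf{What each buys.} Your argument is self-contained. Optimizing $r$ in general, it gives the scale-invariant inequality $\norm{\phi}^2_{\dot{H}^s(\Omega)}\lesssim_{s,d}\norm{\phi}^{2-2s}_{L^2}\norm{\nabla\phi}^{2s}_{L^2}$, with a constant uniform over all convex domains. So for this step, the rescaling to $S$ in the proof of Lemma~\ref{lemma:RNM3} would be unnecessary, and $\lambda_1$ only enters to make $r$ well defined. The paper's route needs no convexity, only a bounded Lipschitz domain, at the price of an $\Omega$-dependent constant. Your fallback via an extension operator and Fourier interpolation is essentially that argument. Convexity costs nothing in context, since the heat smoothing is only ever applied on convex domains.

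\textbf{Your ``delicate point''.} It is not actually delicate. For $x,y$ in the open convex set $\Omega$, the closed segment $[x,y]$ is a compact subset of $\Omega$, where $\phi$ is smooth. So the pointwise bound holds with no approximation argument. The only global input, $\int_\Omega|\nabla\phi|^2=\lambda$, is just the weak Neumann eigenvalue equation tested against $\phi$ itself.
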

\begin{proof}
	Up until now we have considered only seminorms, but we will need to consider now norms by adding the $ L^2 $ term:
	\begin{align*}
		\norm{f}_{W^{s, 2}(\Omega)} = \norm{f}_{L^2(\Omega)} + \norm{f}_{\dot{H}^s(\Omega)}.
	\end{align*}
	The lemma follows from two simple observations. First, for the eigenfunctions, as $ \norm{\phi}_{L^2} = 1 \lesssim \lambda $, the seminorm and norm are comparable. Secondly, we can interpolate using the Gagliardo-Nirenberg-Sobolev inequality. Following \cite[Theorem 1]{BREZIS20181355}, one has
	\begin{equation*}
		\norm{f}^2_{W^{s, 2}(\Omega)} \lesssim \norm{f}^{2- 2s}_{L^2(\Omega)} \norm{f}^{2s}_{W^{1, 2}(\Omega)}.
	\end{equation*}
	Finally, $ \norm{f}_{W^{1, 2}(\Omega)} = 1 + \lambda^{ \frac{1}{2} } \approx \lambda^{ \frac{1}{2} }  $.
\end{proof}

Combining both results, what we have seen is that when $ 0 < s <\frac{1}{2}  $ and $ \phi $ is an eigenfunction,
\begin{align}\label{eq:sobolev}
	\int \left| \widehat{\phi}(\xi)  \right|^2 |\xi|^{2s} d\xi \lesssim \lambda^s. 
\end{align}

\subsection{Homogeneous Point Processes}

\begin{proof}[Proof of Theorem \ref{thm:generalthm}:] The objective is to apply the smoothing procedure via Corollary \ref{cor:smoothing} for
    \begin{align*}
        \mu_L = \frac{1}{N} \sum_{x \in \Xi_L \cap \Omega} \delta_{x}, && \nu_L = \nu = \frac{1}{|\Omega|} dx|_{\Omega}, && N = \# \Xi_L \cap \Omega.
    \end{align*}
    Due to $N$ being a random variable and potentially really small, we will consider two cases: the (unlikely) one where there are very few points, and the (likely) one where there are enough points to apply the smoothing successfully,
    \begin{align*}
        \mathbb{E} W_2(\mu_L, \nu) = \mathbb{E} \left[ W_2(\mu_L, \nu) \carac_{\left\{ N < \frac{1}{2} \mathbb{E}N \right\}} \right] + \mathbb{E} \left[ W_2(\mu_L, \nu) \carac_{\left\{ N \geq \frac{1}{2} \mathbb{E}N \right\}} \right].
    \end{align*} 
    \paragraph{Case 1:} Assume $N < \frac{1}{2} \mathbb{E} N$. Then one can estimate the probability of this event as 
    \begin{align*}
        \mathbb{P} \left( N< \frac{1}{2} \mathbb{E} N \right) \leq \mathbb{P} \left( \left|N - \mathbb{E} N  \right| >   \frac{C}{2}L |\Omega| \right) \leq \frac{\Var N}{\left( \frac{C}{2}L |\Omega|  \right)^2}   \lesssim \frac{L \norm{\carac_\Omega}^2_{L^2}}{\left( \frac{C}{2}L |\Omega|  \right)^2} \approx \frac{1}{L}.
    \end{align*}
    Therefore, as the Wasserstein distance is bounded by, let's say $W_2 (\mu_L, \nu) \leq \operatorname{diam}(\Omega)$, we get 
    \begin{align*}
        \mathbb{E} \left[ W_2(\mu_L, \nu) \carac_{\left\{ N < \frac{1}{2} \mathbb{E}N \right\}} \right] \lesssim \frac{1}{\sqrt{L}},
    \end{align*}
    which is a smaller asymptotic than the final one.
    \paragraph{Case 2:} Assume $N \geq \frac{1}{2} \mathbb{E} N = L \frac{C|\Omega|}{2}$. We can apply the smoothing procedure and hence reduce the problem to studying the statistic
    \begin{align*}
        \mathbb{E} \left| \frac{1}{N} \sum_{x\in \Xi_L \cap \Omega} \phi_k(x) - \nu(\phi_k) \right|^2 & = \mathbb{E} \left| \frac{1}{N} \sum_{x\in \Xi_L \cap \Omega} \phi_k(x) \right|^2 \hspace{-0.4em} \lesssim \frac{1}{L^2} \mathbb{E} \left| \sum_{x\in \Xi_L \cap \Omega} \phi_k(x) \right|^2 \\
        &= \frac{1}{L^2} \Var \left(  \sum_{x\in \Xi_L \cap \Omega} \phi_k(x) \right) \lesssim \frac{1}{L}.
    \end{align*}  
    After putting this in Corollary \ref{cor:smoothing} we get the statement's bound.
\end{proof}

\begin{proof}[Proof of Corollary \ref{thm:homogeneous}]
    To apply Theorem \ref{thm:generalthm}, it is only needed to prove the variance inequality \eqref{eq:varinequality}, which is an easy consequence of the reproductive property of the kernel and the variance formula:
    \begin{align*}
        \Var \left( \sum_{x \in \Xi_L} f(x) \right) = \frac{1}{2}\iint \left|f(x) - f(y)\right|^2 |K_L(x, y)|^2 \lesssim  \int |f(x)|^2 K(x, x) \approx L \norm{f}^2_{L^2}. 
    \end{align*}
\end{proof}

\begin{proof}[Proof of Theorem \ref{thm:hyperuniform}]
	One can follow the general scheme presented in the above proof of Theorem \ref{thm:generalthm}, and just modify the bound in ``Case 2''. Indeed, we were using that 
	\begin{align*}
		\Var \left( \sum_{x \in \Xi_L \cap \Omega} \phi_k(x)  \right) \lesssim L.  
	\end{align*}
	But for hyperuniform point processes one can be more precise. Let's just denote by $ \phi_k $ the extension by $ 0 $ outside of $ \overline{\Omega} $, so that we just have the variance 
	\begin{align*}
		\Var \left( \sum_{x \in \Xi_L} \phi_k(x) \right) = L \int \left| \widehat{\phi}_k(\xi) \right|^2 s \left( \frac{|\xi|}{\sqrt{L}}  \right)  d \xi.
	\end{align*}
	For point processes of Type $ III $, we know that $ s(r) \approx \min (r^{\alpha}, 1) \lesssim r^\alpha $ for some $0 < \alpha < 1 $. This means that using \eqref{eq:sobolev}
	\begin{align*}
		\Var \left( \sum_{x \in \Xi_L} \phi_k(x) \right) \lesssim L^{1 - \frac{\alpha}{2} } \int \left| \widehat{\phi}_k(\xi) \right|^2 |\xi|^{\alpha} d \xi \leq  L^{1 - \frac{\alpha}{2} } \lambda^{ \frac{\alpha}{2} }.
	\end{align*}
	This inequality combined with Corollary \ref{cor:smoothing} gives the $ \frac{1}{\sqrt{L}}  $ bound.

	Notice that for point processes of Type $ I $ and $ II $, although $ \alpha \geq 1 $, we still have a bound of the type $ s(r) \lesssim r^{ \frac{1}{2} } $, so we can apply the previous argument. 
	
\end{proof}

\subsection{Random normal matrices}

\subsubsection{Previous results} 

The random normal matrices model is a topic of research, so its properties are well studied. Notably, the behaviour of the kernel $ K $ has been determined in different regimes. Several results from the literature are utilized and adapted to our setting below, capturing the main properties that we will need from the kernel. Recall that our regularities assumptions are summarized in Remark \ref{remark:regularity}.

The fact the eigenvalues tend to accumulate on the droplet $S$ is well illustrated in Figure \ref{fig:rnm}, but it can also be seen by an exponential decay outside of it. For instance, thanks to \cite[Proposition 3.6]{BerezinTransform} there is a constant $C>0$ such that
\begin{align*}
    K(z, z) \leq C n e^{- n  (Q(z) - \widehat{Q}(z))}.
\end{align*} 
Notice that because of the growth of $Q$ and $\widehat{Q}$, for $|z|$ big enough one has $Q(z) - \widehat{Q}(z) \geq C_0 \log |z|^2$. This means that for let's say $U_0 = \left\{ |z| > R_0 \right\}$ it can be bounded
\begin{align*}
    K(z, z) \lesssim n e^{-n C \log (2 + |z|^2)} = \frac{n}{(2+|z|^2)^{nC}} \lesssim \frac{1}{(2+|z|^2)^3}, && z\in U_0.
\end{align*}
On the other hand, when $z$ is close to $\partial S$, using \cite[Proposition 2.5]{WardIdentities} one has the lower bound $Q(z) - \widehat{Q}(z) \geq C_1 d(z, \partial S)^2$ for $z\in U_1 \backslash S$ and $U_1$ a bounded neighbourhood of $S$. We are going to further assume there is a bit of separation with respect to the droplet, that is, $d(z, \partial S) \geq a_0 \frac{\sqrt{\log n}}{\sqrt{n}}$ for some fixed constant $a_0\in \mathbb{R}_+$ that will be appropriately chosen below. Combining both facts,
\begin{align*}
    K(z, z) \lesssim n e^{-n C_1 a_0^2 \frac{\log n}{n}} = n^{1 - C_1 a_0^2}, && z \in U_1 \backslash S.
\end{align*}
So, if we take $a_0 \geq \frac{1}{\sqrt{C_1}}$, the kernel can be controlled by $K(z, z) \lesssim (2+|z|^2)^{-3}$ for $z\in U_1 \backslash S$ such that $d(z, \partial S) \geq a_0 \frac{\sqrt{\log n}}{\sqrt{n}}$. In the region that is left, that is, $\mathbb{C} \backslash (U_0 \cup U_1) $, it can be used that the functions $Q(z) - \widehat{Q}(z)$ are bounded because it is a compact set. All in all, the decay we need has the form
\begin{align}\label{eq:exteriorbound}
    K(z, z) \lesssim \frac{1}{(2+|z|^2)^3}, && \forall z \notin S, \;\; d(z, \partial S) \geq a_0 \frac{\sqrt{\log n}}{\sqrt{n}}.
\end{align}

Another relevant property of the kernel is that it is uniformly approximated by the equilibrium as long as we are in the interior of the droplet a bit far away from the boundary. 

Let $ \psi $ be the unique analytic function defined in a neighbourhood of $ (z, \overline{z}) $ such that $ \psi(z, \overline{z}) = Q(z) $. Let the approximation of the kernel be
\begin{align*}
	K^\#_n (z, w) = n \partial_1 \partial_2 \psi (z, \overline{w}) e^{ \frac{n}{2} (2 \psi(z, \overline{w}) - Q(z) - Q(w)) }.
\end{align*}
Notice that $ K^\#_n(z, z) = n \frac{1}{4} \Delta Q(z)  $ is proportional to the equilibrium measure. As consequence of \cite[Theorem 2.1]{ameur_2011}, there are constants $ a_0, a_1 > 0 $ such that if $ z\in S $ with $ d(z, \partial S) > a_0 \frac{\sqrt{\log n}}{\sqrt{n}} $ then
\begin{align}\label{eq:bulkbound}
	\left| K_n(z, w) - K_n^\#(z, w) \right| \leq C, && |z-w| < a_1  \frac{\sqrt{\log n}}{\sqrt{n}}.
\end{align}

The constant $a_0$ is taken to be the same as in \eqref{eq:exteriorbound}, at the cost of possibly worsening one of them. 

For the case when $ z $ and $ w $ are not close, \cite[Corollary 8.2 ]{BerezinTransform} imply that there exists some $ \varepsilon>0 $ such that for any $ z\in S $,
\begin{equation} \label{eq:distantzw}
	\left| K_n(z, w) \right|^2 \leq C n^2 e^{- \varepsilon \sqrt{n} \min \left\{ \frac{1}{8} d(z, S^c), |z-w| \right\} }.
\end{equation}
Finally, for the behaviour in the edge we have the following result. Denote by $ d(z) $ the signed distance to the boundary of $ S $, so that it is negative on $ \mathring{S} $ and positive on $ \mathbb{C} \backslash S $. Then, for every fixed $ C_1 > 0 $ there are constants $ c, C $ such that for $ n $ big enough and $ |d(z)| \leq C_1 \frac{\sqrt{\log n}}{\sqrt{n}}  $,
\begin{equation}\label{eq:edge}
	\left| \frac{1}{n} K(z, z) - \frac{1}{4} \Delta Q(z) \carac_{S}(z)    \right| \leq C \left( e^{-c n d(z)^2} + \frac{\log^3 n}{\sqrt{n}}  \right).  
\end{equation}
Indeed, for $ n $ big enough we can write $ z = z_0 + \vec{n}(z_0) d(z) $ for $ z_0 \in \partial S $ and $ \vec{n}(z_0) $ is the normal outward unit vector on $ \partial S $ at $ z_0 $. Using \cite[Lemma 1.3]{MarzoUniversality}, 
\begin{align*}
	\left| \frac{1}{n} K(z, z)  \right| & = \frac{1}{4} \Delta Q(z_0) \left( \frac{1}{2} + O\left( \frac{\log^3 n }{\sqrt{n}}  \right)   \right) \left| \operatorname{erfc}\left( \sqrt{2} \xi \right)  \right|, 
\end{align*}
for the number $ \xi = d(z) \sqrt{n \frac{1}{4} \Delta Q(z_0) } = O(\sqrt{\log n}) $. The complementary error function is normalized as 
\begin{equation*}
	\operatorname{erfc}(s) = \frac{2}{\sqrt{\pi}} \int_{s}^\infty e^{-z^2} dz. 
\end{equation*}
Observe it satisfies $ \left| \frac{1}{2} \operatorname{erfc(s)} - \carac_{(-\infty, 0)}(s)  \right| = \frac{1}{2} \operatorname{erfc}(|s|) \leq e^{-s^2}   $. From this it follows that
{\small
\begin{align*}
	\left| \textstyle\frac{1}{n}K(z, z) - \frac{\Delta Q(z)}{4}  \carac_S(z)   \right| & \leq \frac{\textstyle\Delta Q(z_0)}{4}  \left( \frac{1}{2} \operatorname{erfc}(\sqrt{2}\xi) - \carac_S(z)  \right)     + O\left( \textstyle\frac{\log^3 n}{\sqrt{n}}  \right) +  \frac{\carac_S(z)}{4} \left( \Delta Q(z_0) - \Delta Q(z) \right) \\
	& \lesssim e^{-c n d(z)^2} + \frac{\log^3 n}{\sqrt{n}}.  
\end{align*}}
\subsubsection{Proof of Theorem \ref{thm:RNM}}

\begin{remark}
    Doing a change of variables for a different norm than the Euclidean $|\cdot|$ one is a technical detail that we will recall before we begin the proofs. In the standard polar coordinates one writes $z \mapsto (r, \theta)$ to represent the change $z= r e^{i \theta}$. This way $r$ is $|z|$ and $e^{i \theta}$ represents the point in $\partial \mathbb{D}$ with same angle as $z$. We are going to a similar decomposition $z = r u(\theta)$, but now imposing $u(\theta)\in \partial S$ instead of the circle. 
    
    Assume without loss of generality that $0\in \mathring{S}$. The Minkowski functional is defined for $z \neq 0$ as
    \begin{align*}
        p_S(z) = \inf \left\{ s > 0 \;:\; \frac{x}{s} \in S \right\}.
    \end{align*} 
    Because $0\in \mathring{S}$, it is a standard property that $p_S$ is a norm. Moreover, because $S$ is a compact set with smooth boundary, $\frac{z}{p_S(z)} \in \partial S$ and we can write $z = p_S(z) u(\theta)$ as our change of variables:
    \begin{align*}
        dz = J(r, \theta) dr d \theta, && J(r, \theta) = \left|\det \left( \frac{\partial z}{\partial r \partial \theta} \right)\right| = r |\det(u(\theta), u'(\theta))|.
    \end{align*}   
    Notice that $u(\theta)$ can we written as $u(\theta) = \rho(\theta) e^{i \theta}$, where $\rho(\theta) \in \mathbb{R}_+$. This allows to compute the determinant as $\det(u(\theta), u'(\theta)) = (\rho(\theta))^2$. It is clear from the definition of $\rho$ that it can be bounded by
    \begin{align*}
        0 < \min \left\{ |z| \;:\; z\in \partial S \right\} \leq \rho(\theta) \leq \max \left\{ |z| \;:\; z\in \partial S \right\}.
    \end{align*}
    So, all in all the change of variables has Jacobian $r \rho(\theta)^2 dr d \theta$ with $\rho$ a bounded function.    
\end{remark}

\begin{proof}[Proof of Theorem \ref{thm:RNM}]

    Let $z_1, z_2, \dots, z_n$ be the random points given by a RNM model with the regularity assumptions of the statement. Recall that the empirical measure is $\mu = \frac{1}{n} \sum \delta_{z_i}$ and the limit is $d\sigma = \frac{1}{4} \Delta Q \carac_{S} dA $. 

    First, we can assume without loss of generality that $0\in \mathring{S}$. The complex plane will be divided into three regions so that the points in each one are better studied separately. Namely, we will consider the points far away from the droplet, the edge of the droplet, and the bulk. The bulk, the edge, and the inside of the edge are defined as follows
    \begin{align*}
        S_n = r_n \mathring{S}, && E_n = (r_n' S) \backslash S_n, && E = E_n \cap \mathring{S}.
    \end{align*}
    \begin{figure}[ht]
        \centering
        \includegraphics[width=0.5\linewidth]{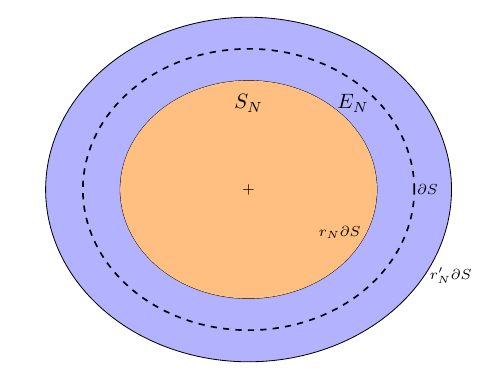}
        \caption{Representation of the edge (in blue) and the bulk (in orange), and the relevant boundaries.}
    \end{figure}

    The idea is that we can control the kernel both inside and outside the droplet, thanks to \eqref{eq:exteriorbound} and \eqref{eq:bulkbound}, provided there is enough separation from the boundary. Then the edge $E_n$ must contain the remaining part. The contraction factor $r_n$ is chosen this way: let $\rho = d(0, \partial S)$, then by convexity we have for any $0 < r < 1$ the inclusion $r S + (1-r)B(0, \rho) \subseteq S$. This means that $d(rS, \partial S) \geq (1-r)\rho$. Let's abbreviate $\delta_n = \frac{\sqrt{\log n}}{\sqrt{n}}$. To guarantee the inclusion $S_n = r_n \mathring{S} \subseteq \left\{ z\in S \;:\; d(z, \partial S) > 2a_0 \delta_n \right\}$ we will choose $r_n = 1 - \frac{2a_0}{\rho} \delta_n$ for $n$ big enough.  

    To define $r_n'$, first notice that because all norms in $\mathbb{R}^2$ are equivalent, $|\cdot| \geq C_0 p_S(\cdot)$. For any $z\notin S$ let $w\in \partial S$ be such that $d(z, \partial S) = |z-w|$. This implies
    \begin{align*}
        d(z, \partial S) = |z-w| \geq C_0 p_S(z-w) \geq C_0(p_S(z) - p_S(w)) = C_0(p_S(z) - 1).
    \end{align*}
    So, if we have $z\notin r_n' S$ and we want to guarantee $d(z, \partial S) > a_0 \delta_n$, it is enough to take $r_n' = 1 + \frac{2a_0}{C_0} \delta_n$.   

    This manuscript contribution is the application of the heat smoothing to the bulk, where most of the points are. The intermediate measures considered are:
    \begin{align*}
        \mu_1 =& \mu(\mathbb{C}\backslash r_n'S) \widehat{\lambda} + \mu|_{E_n} + \mu|_{S_n}, \\
        \mu_2 =& \mu(\mathbb{C}\backslash S_n) \widehat{\lambda} + \mu|_{ S_n}, \\
        \mu_3 =& \mu(\mathbb{C}\backslash S_n) \widehat{\lambda} + \mu(S_n) \frac{\sigma |_{S_n}}{\sigma (S_n)}.
    \end{align*}
    Here $ \widehat{\lambda} $ is an absolutely continuous probability measure supported on $ E $. Its density is comparable to the one $ \frac{\sigma |_{E}}{\sigma (E)} $, and the precise form it takes is explained below in \eqref{eq:lambda}. 

    Hence each of the following terms will be bounded separately
    \begin{align*}
        \mathbb{E} W_2 (\mu, \sigma) \leq \mathbb{E} W_2 (\mu, \mu_1) + \mathbb{E} W_2 (\mu_1, \mu_2) + \mathbb{E} W_2 (\mu_2, \mu_3) + \mathbb{E} W_2 (\mu_3, \sigma)
    \end{align*}
    and the theorem is a consequence of Lemmas \ref{lemma:RNM1}, \ref{lemma:RNM2}, \ref{lemma:RNM3} and \ref{lemma:RNM4} which study the corresponding terms above.
\end{proof}

Using equation~\eqref{eq:exteriorbound} and the change of variables, we can bound the expected number of points outside of $r_n' S$: 
\begin{align*}
    \mathbb{E} \# \left\{ z \notin r_n'S \right\} & = \int_{\mathbb{C}\backslash r_n'S} K(z, z) dz \lesssim \int_{\mathbb{C}\backslash r_n'S} \frac{1}{(2+|z|^2)^3} dz \lesssim 1.
\end{align*}
This approach works to bound the expected Wasserstein distance outside of the droplet as well.

\begin{lemma}\label{lemma:RNM1}
    The points far away from the droplet satisfy $\mathbb{E} W_2 (\mu, \mu_1) \lesssim \frac{1}{\sqrt{n}}$.
\end{lemma}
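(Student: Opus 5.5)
We bound $W_2(\mu,\mu_1)$ with an explicit transport plan rather than an optimal one. The measures $\mu$ and $\mu_1$ agree on $r_n'S$: both contain the parts $\mu|_{E_n}+\mu|_{S_n}$. They differ only in where the mass lying outside $r_n'S$ sits. In $\mu$ this mass is $\frac{1}{n}\sum_{z_i\notin r_n'S}\delta_{z_i}$. In $\mu_1$ it has been replaced by $\mu(\mathbb{C}\backslash r_n'S)\widehat{\lambda}$, which is supported on $E\subseteq S$. The plan therefore keeps $\mu|_{r_n'S}$ fixed, via the identity on the diagonal, and sends each far point $z_i\notin r_n'S$ with mass $\frac1n$ onto the probability measure $\widehat{\lambda}$. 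In other words we use the product plan $\frac1n\delta_{z_i}\otimes\widehat{\lambda}$ for each such point. Both marginals are then correct, and
\begin{align*}
W_2^2(\mu,\mu_1)\le \frac1n\sum_{z_i\notin r_n'S}\int |z_i-w|^2\,d\widehat{\lambda}(w)\le \frac{2}{n}\sum_{z_i\notin r_n'S}\left(|z_i|^2+R^2\right),
\end{align*}
where $R=\max_{w\in S}|w|$, since $\widehat{\lambda}$ is supported in $S$.

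Next we take expectations. Jensen's inequality gives $\mathbb{E}W_2\le(\mathbb{E}W_2^2)^{1/2}$, and by the first intensity formula
\begin{align*}
\mathbb{E}W_2^2(\mu,\mu_1)\lesssim \frac1n\int_{\mathbb{C}\backslash r_n'S}(|z|^2+R^2)K(z,z)\,dA(z).
\end{align*}
Every $z\notin r_n'S$ satisfies $d(z,\partial S)>a_0\delta_n$, by the choice of $r_n'$ made in the proof of Theorem \ref{thm:RNM}. Hence the decay \eqref{eq:exteriorbound}, $K(z,z)\lesssim(2+|z|^2)^{-3}$, applies on this region. The integrand is then $\lesssim (1+|z|^2)(2+|z|^2)^{-3}$, which is integrable over $\mathbb{C}$. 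The integral is therefore $O(1)$ uniformly in $n$, so $\mathbb{E}W_2^2\lesssim\frac1n$ and $\mathbb{E}W_2(\mu,\mu_1)\lesssim\frac{1}{\sqrt n}$.

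The main point to check is the applicability of \eqref{eq:exteriorbound} together with the weight $|z|^2$. The decay $(2+|z|^2)^{-3}$ was deliberately chosen strong enough to absorb the quadratic cost: the far-away region $U_0$ gives $n(2+|z|^2)^{-nC}$, which is even better for large $n$. In the intermediate compact region the cost $|z|^2$ is bounded, so the argument reduces to the bound on the expected number of points outside $r_n'S$ computed just before the lemma. One small detail is that $\widehat{\lambda}$ must be a probability measure (its precise form is given later in \eqref{eq:lambda}), so that the mass $\mu(\mathbb{C}\backslash r_n'S)$ is matched exactly. When there are no points outside $r_n'S$ the distance is zero and nothing needs to be done.
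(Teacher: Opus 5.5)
Your proposal is correct and follows essentially the same route as the paper. Both keep $\mu|_{r_n'S}$ fixed, send each far point's mass $\frac1n$ onto $\widehat{\lambda}$, bound the cost quadratically in $|z_i|$, integrate against $K(z,z)$ using \eqref{eq:exteriorbound}, and finish with Jensen. Your estimate $2(|z_i|^2+R^2)$ replaces the paper's $|z_i|+\operatorname{diam}(S)\lesssim|z_i|$; this is a harmless and slightly cleaner variant, since it does not need the implicit lower bound $|z_i|\gtrsim 1$.
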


\begin{proof}
    The main tool is equation \eqref{eq:exteriorbound} combined with the inequality $d(z, \partial S) \gtrsim p_S(z) - 1$ explained before. This allows us to bound the Wasserstein distance by the transport map
    \begin{itemize}
        \item For the points $z_i \in r_n'S$, there is no need to do anything.
        \item For the points $z_i \notin r_n'S$, send all the mass uniformly into $\widehat{\lambda}$. Then each point moves a mass of $\frac{1}{n}$ and the distance can be bounded by $ \leq |z_i| + \operatorname{diam}(S) \lesssim |z_i|$.
    \end{itemize}
    So, $W_2^2(\mu, \mu_1) \leq \frac{1}{n} \sum_{z_i \notin r_n'S} |z_i|^2$ and thus
    \begin{align*}
        \mathbb{E}W_2^2(\mu, \mu_1) & \lesssim \frac{1}{n} \int_{z\notin r_n'S} |z|^2 K(z, z) dz \lesssim  \frac{1}{n}\int_{z\notin r_n'S} \frac{|z|^2}{(1+|z|^2)^3} \lesssim \frac{1}{n}.
    \end{align*}
    Finally, $\mathbb{E} W_2(\mu, \mu_1) \lesssim \frac{1}{\sqrt{n}}$.

\end{proof}

As we are working with the normalized Lebesgue measure, we will write $|B| = \int_B dA(z) = \frac{1}{\pi} \Vol(B)$. We will also denote the first intensity $ \rho(z) = \frac{1}{n\pi} K(z, z)  $. Note that it is the density of $ \mathbb{E} \mu $.

\begin{lemma}\label{lemma:RNM2}
    The points in the edge satisfy $\mathbb{E} W_2 (\mu_1, \mu_2) \lesssim \frac{1}{\sqrt{n}}$.
\end{lemma}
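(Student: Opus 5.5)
The plan is to observe that $\mu_1$ and $\mu_2$ differ only in the edge part. Indeed $\mu_1-\mu_2=\mu|_{E_n}-\mu(E_n)\widehat{\lambda}$, so we must transport the empirical points lying in the thin annular region $E_n$ onto the mass $\mu(E_n)\widehat{\lambda}$ spread over $E\subseteq E_n$. A crude transport costs $\operatorname{diam}(S)^2\mu(E_n)$. Since $\mathbb{E}\mu(E_n)\approx\delta_n=\sqrt{\log n}/\sqrt{n}$, this only gives $W_2\lesssim\delta_n^{1/2}$, which is too weak. The transport must therefore separate the radial direction, where the region has width $\lesssim\delta_n$, from the angular direction along $\partial S$, where the cost is governed by fluctuations.

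\textbf{Step 1: construction of $\widehat{\lambda}$ and the radial/angular split.} I would use the coordinates $z=r\,u(\theta)$ of the Remark, with $r=p_S(z)$, in which $E_n=\{r_n\le r<r_n'\}$ and $E=\{r_n\le r<1\}$, both of radial width $\lesssim\delta_n$. I would define $\widehat{\lambda}$ in product form,
\begin{align*}
d\widehat{\lambda}(r,\theta)=g(\theta)\,h_n(r)\,dr\,d\theta ,
\end{align*}
where $h_n$ is a probability density on $[r_n,1]$ of size $\approx\delta_n^{-1}$. The angular density $g$ would be chosen proportional to the angular marginal of the first intensity $\rho$ on $E_n$, that is, $g(\theta)\propto\int_{r_n}^{r_n'}\rho(r u(\theta))\,r\rho(\theta)^2\,dr$. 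By the edge asymptotics \eqref{eq:edge} (with $C_1$ large enough) together with $\Delta Q\neq0$, $g$ is bounded above and below. This is what makes the density of $\widehat{\lambda}$ comparable to that of $\sigma|_E/\sigma(E)$.

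The transport is then done in two stages, each using the triangle inequality. First, each point $z_i=r_iu(\theta_i)\in E_n$ is moved radially, keeping $\theta_i$ fixed, onto the distribution $h_n(r)\,dr$. This costs at most $\mu(E_n)\,\delta_n^2$ in $W_2^2$, and taking expectations gives $\delta_n^3\ll 1/n$. What remains is the measure $\nu_\theta\otimes h_n\,dr$, where $\nu_\theta$ denotes the angular marginal of $\mu|_{E_n}$, to be compared with $\mu(E_n)\,g\otimes h_n\,dr$. This is essentially a one-dimensional problem on the closed curve $\theta\in[0,2\pi)$, because a transport in $\theta$ can be lifted fibrewise with cost comparable to the angular one, up to the bounded factor $\rho(\theta)$.

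\textbf{Step 2: one-dimensional estimate.} Applying \eqref{eq:dynamicWass} in its one-dimensional circle version, or equivalently the $\dot H^{-1}$ bound \eqref{eq:peyre} with reference measure $\mu(E_n)g\,d\theta$, I would bound $W_2^2$ by
\begin{align*}
\frac{1}{\mu(E_n)\inf g}\int_0^{2\pi}\bigl|F(\theta)-c\bigr|^2\,d\theta,
\qquad F(\theta)=\mu\bigl(E_n\cap\{\arg\le\theta\}\bigr)-\mu(E_n)\int_0^\theta g,
\end{align*}
where $c$ is the optimal constant. The random prefactor $1/\mu(E_n)$ would be handled as in the proof of Theorem \ref{thm:generalthm}. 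On the event $\mu(E_n)<\tfrac12\mathbb{E}\mu(E_n)$ I would use the crude bound $W_2^2\lesssim\mu(E_n)$, whose expectation on that event is $\lesssim \mathbb{E}\mu(E_n)\,\mathbb{P}(\cdot)\lesssim 1/n$ by Chebyshev. On the complementary event the prefactor is $\lesssim\delta_n^{-1}$.

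It then remains to show $\mathbb{E}|F(\theta)|^2\lesssim\delta_n/n$ uniformly in $\theta$. I would split $F(\theta)$ into its mean and its fluctuation. By the choice of $g$, the mean of $\mu(E_n\cap\text{sector})$ equals $\mathbb{E}\mu(E_n)\int_0^\theta g$, so the bias vanishes. The fluctuation is bounded by $\Var\mu(A)+\Var\mu(E_n)$ for sectors $A\subseteq E_n$. For a projection DPP, the variance formula and the reproducing property give $\Var\#(A)\le\mathbb{E}\#(A)$. Hence $\Var\mu(A)\le n^{-2}\int_{E_n}K(z,z)\,dA\lesssim\delta_n/n$, using $K(z,z)\lesssim n$ near $\partial S$ and $|E_n|\lesssim\delta_n$. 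Multiplying by $\delta_n^{-1}$ yields $\mathbb{E}W_2^2\lesssim 1/n$, and Jensen's inequality gives the lemma.

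\textbf{Main obstacle.} I expect the delicate step to be the precise choice of $\widehat{\lambda}$. Its angular profile must match $\mathbb{E}\mu|_{E_n}$ exactly, so that there is no bias term of size $\delta_n$. At the same time it must stay uniformly comparable to $\sigma|_E/\sigma(E)$, which the later lemmas need, and it must be bounded below, which the $\dot H^{-1}$ step needs. Controlling $g$ from below via \eqref{eq:edge} is where the convexity, the real-analyticity of $\partial S$, and the condition $\Delta Q\neq0$ are used. If an exact match proves impossible to reconcile with later requirements, I would instead take $g$ to be the normalized angular density of $\sigma$. I would then bound the resulting bias using \eqref{eq:edge}, which gives an error of order $\delta_n\log^3 n/\sqrt{n}$ in $F$. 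I would need to check that this error is still acceptable after division by $\delta_n$.
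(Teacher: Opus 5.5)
Your proof is correct, but it is organised differently from the paper's. The paper follows Prod'Homme's multiscale scheme. It splits $E_n$ dyadically into $2^k$ angular sectors $B_j^k$ of equal area and interpolates through $\nu^k=\sum_j\mu(B_j^k)\widehat{\lambda}_{B_j^k}$ down to scale $2^{-\mathbf{K}}\approx\delta_n$. At every scale and in every cell it runs a two-case analysis: a child with too few points is handled by Chebyshev and a tail sum over the size of $\mu(B)$, and otherwise \eqref{eq:dynamicWass} is applied inside the cell. This yields $\mathbb{E}W_2^2(\nu^k,\nu^{k+1})\lesssim 2^{-k}/n$ and a geometric series. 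You instead collapse the transverse direction once, at cost $\delta_n^2\mu(E_n)$, and lift angular plans fibrewise. You then use that on a closed curve the $\dot H^{-1}$ norm is the $L^2$ norm of the difference of distribution functions. As a result, a single global bad event $\{\mu(E_n)<\tfrac12\mathbb{E}\mu(E_n)\}$ replaces all the per-cell case analysis. The dyadic telescoping is essentially a Haar discretisation of your estimate. Both arguments rest on the same inputs: $K(z,z)\lesssim n$, $\Var\#A\le\mathbb{E}\#A$, $\mathbb{E}\mu(E_n)\approx\delta_n$, and exact agreement of the along-boundary marginal of $\widehat{\lambda}$ with that of $\mathbb{E}\mu|_{E_n}$ (built into \eqref{eq:lambda} through $m(s)$). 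Your route is shorter and makes that structure visible. The paper's stays close to the known Ginibre argument, localises low-count events cell by cell, and does not need the one-dimensional distribution-function representation.

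There are two caveats. First, the statement is about the paper's $\widehat{\lambda}$, defined in tubular coordinates, not your product form in Minkowski coordinates. Your argument carries over verbatim: push points along normal fibres onto $\operatorname{Unif}_{F_s}$, then compare $s$-marginals, whose mean profile is exactly $m(s)$. The varying fibre length $\tau(s)$ only adds an $O(\delta_n)$ displacement to the lift. If you keep your own $\widehat{\lambda}$ instead, you would have to redo the $\gamma_2$ step of Lemma \ref{lemma:RNM4} along rays.

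Second, you should drop your fallback with $g$ taken from $\sigma$. From \eqref{eq:edge} alone, the term $e^{-cnd(z)^2}$ integrates across a fibre to order $n^{-1/2}$, not $\delta_n\log^3 n/\sqrt{n}$. So the bias in $F$ is only known to be $O(n^{-1/2})$, and after division by $\mu(E_n)\approx\delta_n$ it contributes up to $1/(n\delta_n)\gg 1/n$ to $\mathbb{E}W_2^2$. This is exactly why $\widehat{\lambda}$ must be matched to $\rho$ rather than $\sigma$. It is also why the paper defers the $\rho$-versus-$\sigma$ fibre-mass discrepancy to Lemma \ref{lemma:RNM4}, where it is tested against $H^1(S)$ functions on the whole droplet and costs only $O(n^{-1/2})$ in $W_2$.
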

The proof for the edge is a generalization of the one done by Prod'Homme \cite{prodhomme} for the Ginibre ensemble. The main idea is to consider the following series of intermediate measures. For any $k = 0, 1, \dots$
\begin{itemize}
    \item $\{B_j^k\}_{j=1}^{2^k}$ is a partition of $E_n$.
    \item Each set has the form $B_j^k = \left\{ z\in E_n \;:\; \arg z \in I_j^k\right\}$ where $I_j^k$ is an interval of the form $[a,b)$.
    \item They are constructed recursively, this is, for any $I_j^k$ we split it in $I_j^k = I_{i}^{k+1} \dot{\cup} I_{i+1}^{k+1}$ so that we have $\frac{1}{2} |B_j^k| = | B_{i}^{k+1} | = |B_{i+1}^{k+1}|$.
    \item We denote the part that is inside the bulk by $\tilde{B}_j^k = B_j^k \cap S$.      
\end{itemize}

We are going to consider a tubular neighbourhood of the boundary $ \partial S $, so that we have the coordinates $ z = \zeta(s) + t \vec{n}(s) $ with $ \zeta(s) \in \partial S $, $ \vec{n}(s) $ the normal inwards vector at $ \zeta(s) $, and $ |t| $ small enough. The Jacobian is given by $ 1 + \kappa(s) t $. For $ n $ big enough, we have that the edge $ E_n $ is contained in this tubular region, so that we can parametrize $ E $ with this coordinates. In this case, each region $ B^k_j $ is a union of fibers $ \{s\}\times (t^-(s), t^+(s)) $. Let's denote by $ F_s = \left\{ s \right\} \times (0, \tau(s)) $ the part of the fiber that is inside the droplet. The length is $ \tau(s) \approx \delta_n $ and depends smoothly on $ s $. Notice that the Jacobian is of the order $ 1 + O(\delta_n) $, as $ t^\pm(s) \approx \delta_n $. For a region $ B= B_j^k $ we define the measure
\begin{align}\label{eq:lambda}
	\lambda_B := m(s) ds \operatorname{Unif}_{F_s}(t) , && m(s):= \int_{t^-(s)}^{t^+(s)} \rho(z(s,t)) \left( 1 +\kappa(s) t \right) dt.
\end{align}
This is a measure supported on $ \tilde{B} = B \cap S_n$. They satisfy $ \lambda_B|_{B'} = \lambda_B'$. Informally, they are a redistribution of $ \rho_n $ along the fibers, as they have the same $ s $-marginal as $ \rho_n $. This formal step is need because we do not have lower bounds of the type $ \rho_n(z) \gtrsim dz $ on the edge. 

Their mass is $ \lambda_B(B) = \rho_n(B) $. We denote its normalization as $ \widehat{\lambda}_B = \frac{1}{\rho_n(B)} \lambda_B  $. We also abbreviate $ \widehat{\lambda}_E = \widehat{\lambda} $. Consider then the (random) measures $\displaystyle \nu^k = \sum_{j=1}^{2^k} \mu(B_j^k) \widehat{\lambda}_{B^k_j} $. We will let $k=0, 1, \dots, \mathbf{K}$ up until it is satisfied $2^{-\mathbf{K}} \in [\delta_n, 2 \delta_n)$.

The proof requires several lemmas that are explained below. Some of them are a generalization of the ones appearing in \cite{prodhomme} for the Ginibre ensemble, valid now for general potentials. 
\begin{lemma}\label{lemma:bounds_expectation}
    Given a measurable set $B \subseteq S$ with $z\in B \Rightarrow d(z, \partial S) > 2a_0 \delta_n$ then we have
    \begin{align*}
        \mathbb{E} \left( \# \left\{ z_i \in B \right\} \right) \approx n |B|, && \mathbb{E} ( \mu(B)) \approx |B|.
    \end{align*}
\end{lemma}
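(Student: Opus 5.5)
Both claims follow from the pointwise bulk asymptotics of the one-point function, integrated over $B$, so the plan is to show that for every $z\in B$ the density $\frac{1}{n}K(z,z)$ is comparable to $1$, uniformly in $n$. By the DPP formula for linear statistics with $f=\carac_B$,
\begin{align*}
\mathbb{E}\,\#\{z_i\in B\} = \int_B K(z,z)\,dA(z), \qquad \mathbb{E}\,\mu(B) = \frac{1}{n}\int_B K(z,z)\,dA(z).
\end{align*}
Once the pointwise comparability $\frac{1}{n}K(z,z)\approx 1$ on $B$ is established, integrating gives $\mathbb{E}\,\#\{z_i\in B\}\approx n|B|$, and dividing by $n$ gives $\mathbb{E}\,\mu(B)\approx |B|$.

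The pointwise estimate comes from the bulk approximation \eqref{eq:bulkbound} taken at $w=z$. Every $z\in B$ satisfies $d(z,\partial S)>2a_0\delta_n>a_0\delta_n$, so \eqref{eq:bulkbound} applies and gives $|K_n(z,z)-K_n^\#(z,z)|\le C$. Since $K_n^\#(z,z)=\frac{n}{4}\Delta Q(z)$, this yields
\begin{align*}
\left|\tfrac{1}{n}K(z,z)-\tfrac{1}{4}\Delta Q(z)\right|\le \frac{C}{n}.
\end{align*}
The constant $C$ is uniform over the bulk region, because \eqref{eq:bulkbound} is stated with fixed constants for all admissible $z$.

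It remains to bound $\frac14\Delta Q$ above and below on $S$. Remark \ref{remark:regularity} gives $Q\in C^2$ and $\Delta Q\neq 0$ on $S$. Moreover $\Delta Q\ge 0$ on $S$, since $\sigma$ is a positive measure; continuity then gives $\Delta Q>0$ on $S$, and compactness of $S$ gives $0<m\le \frac14\Delta Q\le M$ on $S$. Consequently, for $n$ large, $\frac{m}{2}\le \frac1n K(z,z)\le 2M$ on $B$. Finitely many small $n$ can be absorbed into the constants, or one simply notes that the statement is asymptotic in $n$.

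The main point requiring care is the uniformity of the constant in \eqref{eq:bulkbound} all the way up to distance $a_0\delta_n$ from $\partial S$, since $B$ may approach the boundary at that rate. This is exactly the content of \cite[Theorem 2.1]{ameur_2011} as quoted. If one preferred not to rely on the diagonal case of that estimate, a fallback is the edge estimate \eqref{eq:edge} near the boundary: for $|d(z)|\lesssim\delta_n$ with $d(z)\le -2a_0\delta_n$, the error is $e^{-cn d(z)^2}+\frac{\log^3 n}{\sqrt n}=o(1)$ once $a_0$ is large. Deeper in the bulk, \eqref{eq:bulkbound} is needed only with a fixed separation.
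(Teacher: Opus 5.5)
Your proposal is correct and follows essentially the paper's argument: integrate $K(z,z)$ over $B$, and use \eqref{eq:bulkbound} together with $K^\#_n(z,z)=\frac{n}{4}\Delta Q(z)$ and the positivity of $\Delta Q$ on $S$ to get the lower bound. The only difference is cosmetic: the paper takes the upper bound from the global estimate $K(z,z)\lesssim n$ instead of from the bulk approximation.
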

\begin{proof}
    On one hand,
    \begin{align*}
        \mathbb{E} \left( \# \left\{ z_i \in B \right\} \right) & = \int_{B} K(z, z) dA(z) \lesssim n \int_B dA(z) = n |B|.
    \end{align*}
    On the other, using that we are in the bulk and can apply \eqref{eq:bulkbound}
    \begin{align*}
        \mathbb{E} \left( \# \left\{ z_i \in B \right\} \right) & = \hspace{-2pt}\int_B \frac{n}{4} \Delta Q(z) dA(z) + \hspace{-2pt}\int_B K(z, z) - K^{\#}(z, z) dA(z) \geq \frac{n}{4 \pi} C |B| - C'|B| \gtrsim n |B|.
    \end{align*}
\end{proof}

\begin{lemma}\label{lemma:variance}
    Given a measurable set $B \subseteq \mathbb{C}$ then
    \begin{align*}
        \Var \left( \# \left\{ z_i \in B \right\} \right) \leq \mathbb{E}  \# \left\{ z_i \in B \right\} \lesssim n |B|, && \Var ( \mu(B)) \leq \frac{\mathbb{E} \mu(B)}{n} \lesssim \frac{|B|}{n}.
    \end{align*}
\end{lemma}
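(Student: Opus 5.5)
The plan is to apply the variance formula for projection DPPs, already recalled in Section \ref{sec:pointprocesses}, to the indicator $f = \carac_B$, and then to use the reproducing property \eqref{eq:reproducingprop} to reduce everything to the first intensity. The kernel $K_n$ of the RNM model is the reproducing kernel of a finite-dimensional subspace, namely weighted polynomials of degree $<n$. It therefore induces a projection DPP, and we may write
\begin{align*}
\Var \left( \# \left\{ z_i \in B \right\} \right) = \frac{1}{2}\iint |\carac_B(z) - \carac_B(w)|^2 |K(z, w)|^2 dA(z)dA(w) = \int_B \int_{\mathbb{C}\backslash B} |K(z,w)|^2 dA(w) dA(z).
\end{align*}
The second equality comes from the symmetry of the integrand: $|\carac_B(z)-\carac_B(w)|^2$ is the indicator of $(B\times B^c)\cup(B^c\times B)$, and $|K(z,w)|^2=|K(w,z)|^2$ because $K$ is Hermitian.

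Next I will drop the restriction $w\notin B$, which only enlarges the integral, and apply \eqref{eq:reproducingprop}. This gives
\begin{align*}
\Var \left( \# \left\{ z_i \in B \right\} \right) \leq \int_B \int_{\mathbb{C}} |K(z,w)|^2 dA(w)\, dA(z) = \int_B K(z,z)\, dA(z) = \mathbb{E}\, \# \left\{ z_i \in B \right\}.
\end{align*}
The remaining inequality $\mathbb{E}\,\#\{z_i\in B\}\lesssim n|B|$ needs the pointwise bound $K(z,z)\lesssim n$, uniformly on $\mathbb{C}$. I will take this from \cite[Proposition 3.6]{BerezinTransform}, which gives $K(z,z)\le Cne^{-n(Q-\widehat{Q})}\le Cn$ because $Q\ge\widehat{Q}$. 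This is the same estimate used in the upper bound of Lemma \ref{lemma:bounds_expectation}, but here it holds without any restriction on $B$.

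Finally, the statements for $\mu(B)=\frac1n\#\{z_i\in B\}$ follow by scaling. The variance picks up a factor $n^{-2}$ and the expectation a factor $n^{-1}$, so $\Var(\mu(B))\le \frac{1}{n}\mathbb{E}\mu(B)\lesssim \frac{|B|}{n}$.

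I expect no genuine obstacle. The only points needing care are that the RNM kernel is indeed a projection kernel, which holds since it is reproducing, and that the pointwise bound $K(z,z)\lesssim n$ holds everywhere and not only in the bulk. The latter is exactly what the cited exterior estimate provides.
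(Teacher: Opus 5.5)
Your proposal is correct and follows the paper's own proof. Like the paper, you apply the projection-DPP variance formula to $\carac_B$, discard the restriction to $B^c$, and use the reproducing property to reach $\int_B K(z,z)\,dA(z)\lesssim n|B|$. Your explicit symmetrization to $\int_B\int_{\mathbb{C}\backslash B}|K|^2$ and your justification of $K(z,z)\lesssim n$ via $Q\ge\widehat{Q}$ spell out what the paper's one-line inequality leaves implicit; that inequality holds only after symmetrization, not pointwise.
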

\begin{proof}
    This is just an application of the reproducing property of the kernel:
    \begin{align*}
        \Var \left( \#\left\{ z_i \in B \right\} \right) & = \frac{1}{2} \iint \left|\carac_B(z) - \carac_B(w)\right|^2 |K(z,w)|^2 dA(z)dA(w) \\
        &\leq \iint \carac_B(z) |K(z, w)|^2 dA(z)dA(w) 
         = \int \carac_B K(z, z) dA(z) \lesssim n |B|.
    \end{align*}
\end{proof}

We are choosing to divide the partitions so that the Lebesgue measure is exactly divided in two. This choice does not affect the division of the angles or the mass of the equilibrium measure too much.

\begin{lemma}
    We have $|I_j^k| \approx \frac{1}{2^k}$ and $|B^k_j| \approx 2^{-k} \delta_n$.
\end{lemma}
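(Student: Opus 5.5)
The bounds follow from the geometry of $E_n$ in the generalized polar coordinates $z = r\,u(\theta)$ of the preceding Remark, together with the recursive construction of the partition. By definition $E_n = (r_n' S)\setminus (r_n \mathring{S})$, so in these coordinates
\begin{align*}
    E_n = \left\{ r u(\theta) \;:\; r_n \leq r < r_n',\ \theta\in[0,2\pi) \right\},
\end{align*}
where we take the angular parameter of $z$ to be $\theta = \arg z$. Since the Jacobian is $r\rho(\theta)^2$, for any interval $I$ we get
\begin{align*}
    \left|\left\{ z\in E_n : \arg z\in I\right\}\right| = \frac{1}{\pi}\int_I \rho(\theta)^2\, d\theta \int_{r_n}^{r_n'} r\,dr = \frac{(r_n')^2 - r_n^2}{2\pi}\int_I \rho(\theta)^2 \,d\theta .
\end{align*}
Call this identity $(\ast)$.

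Next we estimate the radial factor. From $r_n = 1 - \frac{2a_0}{\rho}\delta_n$ and $r_n' = 1 + \frac{2a_0}{C_0}\delta_n$,
\begin{align*}
    (r_n')^2 - r_n^2 = (r_n'-r_n)(r_n'+r_n) \approx \delta_n
\end{align*}
for $n$ large. The comparability constants depend only on $a_0,\rho,C_0$.

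We also control the angular weight. By the Remark,
\begin{align*}
    0 < \min_{\partial S}|z| \leq \rho(\theta)\leq \max_{\partial S}|z| .
\end{align*}
Plugging this into $(\ast)$ gives, uniformly in $I$,
\begin{align*}
    |\{z\in E_n:\arg z\in I\}| \approx \delta_n\, |I| .
\end{align*}
This is the key two-sided comparison between Lebesgue measure of a sector of the edge and the length of its angular interval.

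Now we run the induction on $k$. For $k=0$, $B_1^0 = E_n$, so $|I_1^0| = 2\pi \approx 1$. The recursive construction halves the Lebesgue measure at every step, so $|B_j^k| = 2^{-k}|E_n|$ exactly. Then $(\ast)$ with $I=[0,2\pi)$ gives $|E_n|\approx\delta_n$, hence $|B_j^k|\approx 2^{-k}\delta_n$. Applying the two-sided comparison to $B_j^k$ yields $\delta_n|I_j^k| \approx |B_j^k| \approx 2^{-k}\delta_n$, so $|I_j^k|\approx 2^{-k}$.

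The constants stay uniform in $k$ and $j$ because each comparison is a direct application of $(\ast)$, not an iterated one. This matters: the halving condition is stated in Lebesgue measure, not in angle, so one should not try to track angle ratios through the recursion, where constants could compound.

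The only point needing care, which I expect to be the main technical obstacle, is the description of $E_n$ in the coordinates $r u(\theta)$, namely that $\arg z$ is the same parameter for all $r$. This holds because $z$ and $u(\theta)$ lie on the same ray from $0$, as $0\in\mathring{S}$ and $S$ is convex, so the Minkowski functional decomposition is valid. A second point is that the existence of the splitting point of each $I_j^k$ follows from continuity and monotonicity of $b\mapsto \int_a^b\rho^2$.
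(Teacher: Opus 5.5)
Your proof is correct and follows essentially the same route as the paper: you write the area of each sector in the coordinates $z = r\,u(\theta)$ with Jacobian $r\rho(\theta)^2$, use $\rho(\theta)\approx 1$ and $\int_{r_n}^{r_n'} r\,dr \approx \delta_n$ to get $|B_j^k| \approx \delta_n |I_j^k|$ uniformly, and combine this with the exact halving $|B_j^k| = 2^{-k}|E_n|$. Your version computes the radial factor exactly and correctly uses $|E_n|$ where the paper writes $|E|$, but the argument is the same.
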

\begin{proof}
    Using the change of variables explained at the beginning of the section, one may express the area as the integral $|B_j^k| = \int_{\theta \in I_j^k} \rho(\theta)^2 d \theta \int_{r_n}^{r_n'} r dr$, where $ \rho(\theta) \approx 1$. Hence,
    \begin{align*}
        \frac{|E|}{2^k} = |B_j^k| \lesssim (r_n'-r_n) r_n'  |I_k^j| \approx \delta_n |I^k_j| && \Longrightarrow && |I_j^k| \gtrsim \frac{|E|}{\delta_n 2^k} \approx \frac{1}{2^k} \\
        \frac{|E|}{2^k} = |B_j^k| \gtrsim (r_n'-r_n) r_n  |I_k^j| \approx \delta_n |I^k_j| && \Longrightarrow && |I_j^k| \lesssim \frac{|E|}{\delta_n 2^k} \approx \frac{1}{2^k}
    \end{align*}   
\end{proof}

We will often take the convention $B^k_j = B$, $ B_{i}^{k+1} = B_1$ and $ B_{i+1}^{k+1} = B_2$ when we are working with an arbitrary set of the partitions and its two subsets.

\begin{lemma}\label{lemma:densitym}
    Uniformly in $ s $ and $ n $, one has $ m(s) \approx \delta_n $. 
\end{lemma}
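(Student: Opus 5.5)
We need $m(s)=\int_{t^-(s)}^{t^+(s)}\rho(z(s,t))(1+\kappa(s)t)\,dt \approx \delta_n$, uniformly in $s$ and $n$. We have $\rho=\frac{1}{n\pi}K(z,z)$, and the fiber $(t^-(s),t^+(s))$ is the intersection of the normal line at $\zeta(s)$ with $E_n$. Its length is comparable to $\delta_n$, since both $r_n$ and $r_n'$ differ from $1$ by multiples of $\delta_n$, and the normal direction is transversal to the level sets of $p_S$; this is the same fact that gives $\tau(s)\approx\delta_n$. The Jacobian $1+\kappa(s)t$ equals $1+O(\delta_n)$, so it lies in $[1/2,2]$ for $n$ large and can be dropped up to constants. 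It therefore suffices to show
\[
\int_{t^-(s)}^{t^+(s)} \tfrac1n K(z(s,t),z(s,t))\,dt\approx \delta_n .
\]

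\emph{Upper bound.} On the whole edge $|d(z)|\lesssim\delta_n$, so the edge estimate \eqref{eq:edge} applies with a suitable $C_1$. It gives $\frac1n K(z,z)\le \frac14\Delta Q(z)\carac_S(z)+C\big(e^{-cnd(z)^2}+\frac{\log^3 n}{\sqrt n}\big)\lesssim 1$, because $\Delta Q$ is bounded near $S$. Integrating over a fiber of length $\approx\delta_n$ yields $m(s)\lesssim\delta_n$.

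\emph{Lower bound.} Here I will use only the inner part of the fiber, i.e.\ the points inside $S$. By \eqref{eq:edge}, for $z\in S$,
\[
\tfrac1nK(z,z)\ge \tfrac14\Delta Q(z)-C e^{-cnd(z)^2}-C\tfrac{\log^3 n}{\sqrt n},
\]
and $\Delta Q\ge c_0>0$ on $S$ by the regularity assumptions and compactness. I split the interior part of the fiber according to the depth $|d(z)|$:
\begin{itemize}
\item Where $|d(z)|\ge M/\sqrt n$, with $M$ a large constant, the Gaussian error is at most $Ce^{-cM^2}\le c_0/16$. For $n$ large the $\log^3 n/\sqrt n$ term is also at most $c_0/16$. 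Hence the density is at least $c_0/8$ there.
\item The portion of the fiber with $|d(z)|\in [M/\sqrt n,\ \tau(s)]$ has length $\tau(s)-M/\sqrt n\ge \tau(s)/2\gtrsim\delta_n$. This holds because $\delta_n=\sqrt{\log n}/\sqrt n\gg M/\sqrt n$.
\item Along the normal fiber $|d(z)|$ is comparable to $t$, in fact equal to it for $n$ large in the tubular neighbourhood, so the two pieces above correspond to intervals in $t$.
\end{itemize}
Combining, the integral over the interior part of the fiber is $\gtrsim \delta_n$.

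\emph{Main obstacle.} The delicate step is checking that the intervals $(t^-(s),t^+(s))$ really have length $\approx\delta_n$ uniformly in $s$, and that their interior part reaches depth $\gtrsim\delta_n$. Both have to be derived from the definitions $S_n=r_n\mathring S$ and $r_n'S$ via the Minkowski functional, rather than from the signed distance. For the depth I would argue as follows. Points of $\partial S_n$ lie at distance $\ge 2a_0\delta_n$ from $\partial S$ by the choice of $r_n$. Conversely, $p_S$ is Lipschitz and $S$ is smooth and strictly contains a ball around $0$, so the sets $\{r_nS\}$ and $\{r_n'S\}$ lie at Hausdorff distance $\approx\delta_n$ from $\partial S$. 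Hence each normal segment crosses $E_n$ in a single interval of length $\approx\delta_n$, with an interior part of length $\approx\delta_n$. Here I use convexity: $\langle \vec n, \nabla p_S\rangle$ is bounded away from zero near $\partial S$, so the normal line is transversal to the level sets of $p_S$.

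With these geometric facts, the two-sided density estimates from \eqref{eq:edge} conclude $m(s)\approx\delta_n$.
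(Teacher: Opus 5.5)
Your argument is correct and has the same skeleton as the paper's. The upper bound is density $\lesssim 1$ times fiber length $\lesssim\delta_n$. The lower bound comes from a sub-interval of the fiber of length $\gtrsim\delta_n$ on which the density is $\gtrsim 1$. The difference is the kernel estimate you use to get these.

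The paper does not use the edge asymptotic \eqref{eq:edge} here at all. Its upper bound comes from the global bound $K(z,z)\le Cne^{-n(Q-\widehat{Q})}\le Cn$. Its lower bound comes from the bulk approximation \eqref{eq:bulkbound}, taken at $w=z$, on the band $\{z\in S : a_0\delta_n<d(z,\partial S)<2a_0\delta_n\}$. This band lies in $E$ automatically, because the choice of $r_n$ gives $S_n\subseteq\{d(z,\partial S)>2a_0\delta_n\}$. In tubular coordinates the band is the piece $a_0\delta_n<t<2a_0\delta_n$ of every fiber, so the paper needs neither your $M/\sqrt{n}$ cutoff nor a separate depth argument.

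Your route relies on the finer universality input behind \eqref{eq:edge}, which the paper only needs later, for the $\gamma_m$ term in Lemma \ref{lemma:RNM4}. In exchange, it shows the density is close to $\frac14\Delta Q$ on essentially the whole interior part of the fiber, not merely bounded below on a thin band.

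The ``main obstacle'' you flag is also lighter than you suggest. The depth bound $\tau(s)\ge 2a_0\delta_n$ is exactly your one-line remark about $\partial S_n$. Only the upper bound $t^+(s)-t^-(s)\lesssim\delta_n$ needs the Minkowski-functional comparison, and the paper simply takes that for granted.
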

This lemma has several consequences that will be needed:
\begin{itemize}
	\item $ \rho_n(B) \approx |B| \approx 2^{-k} \delta_n $.
	\item The density of $ \widehat{\lambda}_B $ on $ B $ is $ \approx \frac{1}{\rho_n(B)} \frac{m(s)}{\tau(s)} \approx \frac{1}{|B|}    $.
	\item There is some constant $ 0 < c < 1 $ such that for all of regions $ \frac{ \rho_n(B_1)}{\rho_n(B)} \in [c, 1-c] $.
\end{itemize}
\begin{proof}
	The upper bound follows from $ K(z, z) \lesssim n $ and $ t^\pm(s) \approx \delta_n $. For the lower bound, notice we have left some part of the ``bulk'' on the edge region. Indeed, the edge contains the band $ \left\{ z \in S \;:\; a_0 \delta_n < d(z, \partial S) < 2a_0 \delta_n \right\} \subset E  $, where we can apply \eqref{eq:bulkbound} and hence $ K(z, z) \gtrsim n $. As in tubular coordinates this band corresponds with the part of the fiber $ a_0 \delta_n < t < 2a_0 \delta_n $, we have the other bound. 
\end{proof}

\begin{proof}[Proof of Lemma \ref{lemma:RNM2}]

It is enough to bound $W_2(\mu|_{E_n}, \mu(E_n) \widehat{\lambda} )$. As expected, we are going to use the intermediate measures constructed. 

\paragraph{Bounding the last measure:} Consider the transport map from $\nu^{\mathbf{K}}= \lambda$ to $\mu|_{E_n}$ that in each region $B_j^{\mathbf{K}}$ sends $\frac{1}{n} 
\nu|_{\tilde{B}_j^{\mathbf{K}}}$ to any of the points $z_i \in B_j^{\mathbf{K}}$. If $\mu(B_k^\mathbf{K})=0$ then there is no need to do anything. We are moving a mass of $\mu(B^{\mathbf{K}}_j)$ and the distance can be bounded by $\lesssim 2^{-\mathbf{K}} +  (r_n'-r_n) \lesssim \delta_n$, hence this gets 
\begin{align*}
    W_2^2(\nu^{\mathbf{K}}, \mu|_{E_n}) \lesssim \sum_{j=1}^{2^{\mathbf{K}}} \delta^2 \mu(B_j^{\mathbf{K}}) = \delta^2_n \mu(E_n).
\end{align*}

\paragraph{Bounding intermediate measures:} We will work individually in each region $B^k_j =: B$. Denote by $B_1, B_2$ its corresponding sets in the next measure $\nu^{k+1}$ and $\tilde{B}, \tilde{B}_1, \tilde{B}_2$ the intersections with $S$. What we will study then is $\mathbb{E} W_2^2 (\nu^k|_{\tilde{B}}, \nu^{k+1}|_{\tilde{B}})$.

We will distinguish two main cases. For this, fix a constant $C_0>0$ such that $\mathbb{E} \mu (B_i^k) \geq C_0 |{B}_i^k|$ for any of our regions, as seen in Lemma \ref{lemma:densitym}.

\paragraph{Case 1: } Assume there is some $i\in \left\{ 1, 2 \right\}$ such that $\mu(B_i) \leq \frac{C |{B}_i|}{2}$. The probability of this event is
\begin{align*}
    \mathbb{P} \left( \mu(B_i) \leq \frac{C_0 |B|}{2} \right) & = \mathbb{P} \left( \mu(B_i) - \mathbb{E} \mu(B_i) \leq C_0 |B_i| - \mathbb{E}\mu(B_i) - \frac{C_0 |B_i|}{2} \right) \\
    & \leq \mathbb{P} \left( \mu(B_i) - \mathbb{E} \mu(B_i) \leq - \frac{C_0 |B_i|}{2} \right) \\
    & \leq \mathbb{P}  \left( \left|\mu(B_i) - \mathbb{E}\mu(B_i) \right|  \geq \frac{C_0 |B_i|}{2} \right) \leq \frac{\Var \mu(B_i)}{\left(  \frac{C_0 |B_i|}{2}\right)^2} \lesssim \frac{1}{n |B_i|}.
\end{align*}
where at the end is used Lemma \ref{lemma:variance}. 
By the same token, for any $s\geq 2$
\begin{align*}
    \mathbb{P}\hspace{-2pt} \left( \mu(B) \geq s^2 C_0 |B| \right) \hspace{-2pt}\leq \hspace{-2pt}\mathbb{P} \hspace{-2pt}\left( \mu(B) - \mathbb{E} \mu(B) \geq (s^2 - 1) C_0 |B| \right) \hspace{-2pt}\leq \hspace{-2pt}\frac{\Var \mu (B)}{\left( (s^2-1) C_0 |B| \right)^2} \hspace{-2pt}\lesssim \hspace{-2pt}\frac{1}{n (s^2-1)^2 |B|}.
\end{align*}
To bound the transport map in the region $\tilde{B}$, notice that the distance can be bounded by $\lesssim \delta_n + 2^{-k} \lesssim 2^{-k}$. The mass is $\mu(B)$ naturally. Hence we have
\begin{align*}
    \mathbb{E} \bigg[ W^2_2(\nu^k|_{\tilde{B}}, \nu^{k+1}|_{\tilde{B}}) &\carac_{\mu (B_i)  \leq \frac{C_0 |B_i|}{2}}  \bigg] \lesssim 2^{-2k} \mathbb{E} \left[ \mu(B) \carac_{\mu (B_i) \leq \frac{C_0 |B_i|}{2}} \right].
\end{align*}

With respect to the expectation, we will differentiate the following cases:
\begin{itemize}
    \item $\mu(B) \leq 4 C_0 |B|$ 
    \item For $s\geq 2$,   $\;s^2 C_0 |B| < \mu(B) \leq (s+1)^2 C_0 |B|$  
\end{itemize}   
With this separation in mind,
{\small\begin{align*}
    \mathbb{E} \left[ \mu(B) \carac_{\mu (B_i) \leq \frac{C_0 |B_i|}{2}} \right]  & \leq \mathbb{E} \left[ \mu(B)  \carac_{\mu (B_i) \leq \frac{C_0 |B_i|}{2}} \carac_{\mu(B) < 4 C_0 |B|} \right] + \sum_{s=2}^\infty \mathbb{E} \left[ \mu(B)  \carac_{s^2 C_0 |B| < \mu(B) < (s+1)^2 C_0 |B|} \right] \\
    & \leq  4C_0|B| \mathbb{P}\left(\exists i \;:\; \mu (B_i) \leq \frac{C_0 |B_i|}{2}\right)  + \sum_{s=2}^\infty  (s+1)^2 C_0 |B| \mathbb{P} \left( s^2 C_0 |B| < \mu(B) \right) \\
    & \lesssim |B| \sum_{i=1, 2} \frac{1}{n |B_i|} + \hspace{-2pt}\sum_{s=2}^\infty (s+1)^2 |B| \frac{1}{n (s^2-1)^2 |B|} \lesssim \frac{1}{n} \left( \sum_{i=1, 2} 2 + \hspace{-2pt} \sum_{s=2}^\infty \frac{(s+1)^2}{(s^2-1)^2} \right).
\end{align*}}
All in all, 
\begin{align*}
    \mathbb{E} \bigg[ W^2_2(\nu^k|_B, \nu^{k+1}|_B) &\carac_{\mu (B_i)  \leq \frac{C |B_i|}{2}}  \bigg] \lesssim \frac{2^{-2k}}{n}.
\end{align*}
\paragraph{Case 2: } Assume $\mu(B_i) \geq \frac{C_0 |B_i|}{2}$ for both $i=1, 2$. Because the measures have the form
\begin{align*}
    \nu^k|_{\tilde{B}} = \mu(B) \frac{\lambda_B}{\rho(B)}, && \nu^{k+1}|_{\tilde{B}} = \mu(B_1) \frac{\lambda_{{B}_1}}{\rho({{B}_1})} + \mu(B_2) \frac{\lambda_{{B}_2}}{\rho({{B}_2})}.
\end{align*}  
The assumption implies that $\nu^{k+1}|_{\tilde{B}}\gtrsim \lambda_B \gtrsim dz|_{\tilde{B}}$, and we can apply equation \eqref{eq:dynamicWass}:
\begin{align*}
	W^2_2(\nu^k|_{\tilde{B}}, \nu^{k+1}|_{\tilde{B}}) & \lesssim  \operatorname{diam}(\tilde{B})^2 \int_{\tilde{B}} \left|\nu^{k+1}- \nu^k\right|^2 \lesssim 2^{-2k} \sum_{i=1, 2} \left| \frac{\mu(B)}{\rho(B)} - \frac{\mu(B_i)}{\rho(B_i)} \right|^2 \rho({{B}}_i) \\
	& \lesssim  2^{-2k}\sum_{i=1, 2} \rho({{B}}_i) \left| \frac{\mu(B)}{\rho(B)}- 1  \right|^2 +\rho({{B}}_i) \left| \frac{\mu(B_i)}{\rho(B_i)}- 1  \right|^2.
\end{align*} 

Let's focus our attention on one of the terms. When taking expectation, because $ \mathbb{E} \mu = \rho $, we get
\begin{align*}
	\rho({{B}}_i) \mathbb{E}\left| \frac{\mu(B)}{\rho(B)}- 1  \right|^2 = \frac{\rho(B_i)}{\rho(B)^2} \Var \left( \mu(B) \right) \lesssim \frac{1}{n}.   
\end{align*}
As $ \rho(B) \approx \rho(B_1) \approx \rho(B_2) $, the other terms are bounded identically. All in all, we have seen
\begin{align}
    \label{eq:case2}
    \mathbb{E} \bigg[ W^2_2(\nu^k|_{\tilde{B}}, \nu^{k+1}|_{\tilde{B}}) \carac_{\mu (B_i)  \geq \frac{C_0 |B_i|}{2} \forall i=1, 2}  \bigg] \lesssim \frac{2^{-2k}}{n}.
\end{align}

\paragraph{Combining intermediate steps: } In both cases, it has been proven that $\mathbb{E}  W^2_2(\nu^k|_{\tilde{B}}, \nu^{k+1}|_{\tilde{B}}) \lesssim \frac{2^{-2k} }{n}$. As there are $2^k$ regions ${\tilde{B}}$, this implies $\mathbb{E}  W^2_2(\nu^k, \nu^{k+1}) \lesssim \frac{2^{-k}}{n}$. Therefore the final bound is
\begin{align*}
    \sqrt{\mathbb{E}W_2^2 (\mu_1, \mu_2)} & \leq \sqrt{\mathbb{E}W_2^2 (\mu|_{E_n}, \nu^{\mathbf{K}})} + \sum_{k=0}^{\mathbf{K}-1} \sqrt{\mathbb{E}W_2^2 (\nu^k, \nu^{k+1})} \\
    &\lesssim \left( \delta_n^2 \mu(E) \right)^{\frac{1}{2}} + \sum_{k=0}^{\mathbf{K}-1} \frac{1}{\sqrt{n}} 2^{- \frac{k}{2}} \lesssim \frac{1}{\sqrt{n}}.
\end{align*}  

\end{proof}

Lemma \ref{lemma:variance} studies the variance of the number of points in an arbitrary set, and therefore is not always precise. When we are working with a set like the bulk, that has the form of the droplet minus a \textit{small part}, one should expect the variance to be small, as we expect the points that may or may not be in the set to be few (in comparison with the total). Using this idea and the reproducing property of the kernel we can get the following result.
\begin{lemma}\label{lemma:varedge}
    Given a measurable set contained in the bulk $B \subseteq S$ one has 
    \begin{align*}
        \Var \left( \#\left\{ z_i \in B \right\} \right) \lesssim 1 + n \left| r_n' S \backslash B\right|, && \Var \mu(B) \lesssim \frac{ \frac{1}{n}  + \left| r_n' S \backslash B\right|}{n}.
    \end{align*}
\end{lemma}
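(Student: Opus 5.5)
The plan is to use the variance formula for projection DPPs together with the reproducing property \eqref{eq:reproducingprop}, keeping only the mixed terms. Write
\begin{align*}
\Var \left( \#\left\{ z_i \in B \right\} \right) = \frac{1}{2}\iint |\carac_B(z)-\carac_B(w)|^2 |K(z,w)|^2 dA(z)dA(w) = \int_B \int_{B^c} |K(z,w)|^2 dA(w)\, dA(z).
\end{align*}
Since $|K(z,w)|^2$ is symmetric, this also equals $\int_{B^c}\int_B |K(z,w)|^2 dA(z) dA(w)$. Now split the outer variable $w\in B^c$ into two regions: $w\in r_n'S\backslash B$ and $w\notin r_n'S$.

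On the first region, bound the inner integral by the full integral $\int_{\mathbb{C}} |K(z,w)|^2 dA(z) = K(w,w)$ using the reproducing property. Then use $K(w,w)\lesssim n$, which follows from the uniform bound $K(z,z)\le Cn e^{-n(Q-\widehat Q)}\le Cn$ of \cite[Proposition 3.6]{BerezinTransform}. This gives a contribution $\lesssim n|r_n'S\backslash B|$.

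On the second region, again bound by $\int_{\mathbb{C}\backslash r_n'S} K(w,w)\,dA(w)$. Every such $w$ satisfies $d(w,\partial S)>a_0\delta_n$ by the choice of $r_n'$, so \eqref{eq:exteriorbound} applies. It gives $K(w,w)\lesssim (2+|w|^2)^{-3}$, whose integral over the plane is $O(1)$; this is exactly the computation already done before Lemma \ref{lemma:RNM1}. Summing the two contributions yields $\Var(\#\{z_i\in B\})\lesssim 1+n|r_n'S\backslash B|$. The statement for $\mu(B)=\frac1n\#\{z_i\in B\}$ follows by dividing by $n^2$, giving $\frac{1/n+|r_n'S\backslash B|}{n}$.

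The argument is short, and the only point requiring care is the exterior region. One must check that \eqref{eq:exteriorbound} holds uniformly on all of $\mathbb{C}\backslash r_n'S$, including the compact middle region, and that the implicit constant is independent of $n$. Both are guaranteed by the construction of \eqref{eq:exteriorbound} and of $r_n'$ made earlier. Note also that the hypothesis $B\subseteq S$ is used only to ensure $B\subseteq r_n'S$, so that the complement splits as described.
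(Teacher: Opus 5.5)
Your proof is correct and matches the paper's argument. The paper uses that the total number of points is fixed to write $\Var(\#\{z_i\in B\})=\Var(\#\{z_i\in\mathbb{C}\backslash B\})\le\int_{\mathbb{C}\backslash B}K(z,z)\,dA(z)$, which is the same bound you obtain from $\int_B\int_{B^c}|K|^2$ and the reproducing property; the split into $r_n'S\backslash B$ (using $K\lesssim n$) and $\mathbb{C}\backslash r_n'S$ (using \eqref{eq:exteriorbound}) is also the paper's.
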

\begin{proof}
    Observe that as the total number of points $n$ is fixed one has $ \Var \left( \#\left\{ z_i \in B \right\} \right) = \Var \left(  \#\left\{ z_i \in \mathbb{C} \backslash B \right\}\right)$. Therefore
    \begin{align*}
        \Var \left(  \#\left\{ z_i \in \mathbb{C} \backslash B \right\}\right) & \leq \int_{\mathbb{C}\backslash B} K(z, z) dA(z) = \int_{\mathbb{C}\backslash r_n' S} K(z, z) + \int_{r_n' S \backslash B} K(z, z) \lesssim 1 + n |r_n' S \backslash B |.
    \end{align*} 
\end{proof}
This theorem will be applied to the set $B = S_n$, where it readily implies $\Var \mu (S_n) \lesssim \frac{|E|}{n} \approx \frac{\sqrt{\log n}}{n^{ \frac{3}{2} }} $.  

\begin{lemma}\label{lemma:RNM3}
	The points in the bulk satisfy $\mathbb{E} W_2 (\mu_2, \mu_3) \lesssim \frac{1}{\sqrt{n}}$.
\end{lemma}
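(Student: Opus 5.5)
Since $\mu_2$ and $\mu_3$ share the component $\mu(\mathbb{C}\backslash S_n)\widehat{\lambda}$, which is supported on $E$ and disjoint from $S_n$, it is enough to bound
\begin{align*}
W_2\left(\mu|_{S_n}, \mu(S_n)\frac{\sigma|_{S_n}}{\sigma(S_n)}\right).
\end{align*}
Both measures have total mass $\mu(S_n)$ and live on the convex set $S_n = r_n\mathring{S}$. The plan is to apply the heat smoothing of Theorem \ref{thm:smoothing} on $\Omega = S_n$ with $\nu = \mu(S_n)\sigma|_{S_n}/\sigma(S_n)$. Because $\Delta Q \gtrsim 1$ on $S$ and $\sigma(S_n)\le 1$, we have $\nu \ge c\,\mu(S_n)\Vol|_{S_n}$, so $C_2(\nu,t)^2 \gtrsim \mu(S_n)$.

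As in the proof of Theorem \ref{thm:generalthm}, I first split on the event $\{\mu(S_n) < \tfrac12\}$. Lemma \ref{lemma:varedge} and Chebyshev give that its probability is $\lesssim \Var\mu(S_n) \lesssim n^{-3/2}$, while $W_2 \lesssim \operatorname{diam}(S)$ there, so this event contributes $\lesssim n^{-3/2}$. On the complement the factor $1/C_2$ is bounded by a constant, the constant $C_1$ is bounded by a constant, and Jensen reduces the problem to controlling
\begin{align*}
\mathbb{E}\bigl|\widehat{\mu}(k) - \widehat{\nu}(k)\bigr|^2, \qquad \widehat{\mu}(k) = \frac1n \sum_{z_i\in S_n}\phi_k(z_i),
\end{align*}
where $\phi_k$ are the Neumann eigenfunctions of $S_n$. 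The eigenvalues and eigenfunctions are those of $S$ rescaled by $r_n \to 1$, so Weyl's law and the estimates of Corollary \ref{cor:smoothing} hold uniformly in $n$.

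For $k\ge1$, $\phi_k$ has zero mean on $S_n$, so $\widehat{\nu}(k) = \mu(S_n)\int \phi_k\, d\sigma/\sigma(S_n)$, and this is not zero because $\sigma$ is not uniform. I write the difference as
\begin{align*}
\widehat{\mu}(k) - \widehat{\nu}(k) = \bigl(\mu(\phi_k\carac_{S_n}) - \mathbb{E}\mu(\phi_k\carac_{S_n})\bigr) + \bigl(\mathbb{E}\mu(\phi_k\carac_{S_n}) - \sigma(\phi_k\carac_{S_n})\bigr) - \bigl(\mu(S_n)/\sigma(S_n) - 1\bigr)\sigma(\phi_k\carac_{S_n}).
\end{align*}

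The third term has second moment $\lesssim \Var\mu(S_n) + (\mathbb{E}\mu(S_n) - \sigma(S_n))^2$, times $|\sigma(\phi_k)|^2 \le \|\Delta Q\|_{L^2}^2$. This is $O(1/n)$ by Lemma \ref{lemma:varedge} together with the bulk estimate \eqref{eq:bulkbound}, which makes the density of $\mathbb{E}\mu$ equal to that of $\sigma$ up to $O(1/n)$ on $S_n$. The same estimate bounds the bias (second) term by $O(1/n)$ pointwise, using $\|\phi_k\|_{L^1}\lesssim 1$.

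The main obstacle is the variance term. The crude bound from the reproducing property gives
\begin{align*}
\Var \sum_{z_i \in S_n}\phi_k(z_i) \lesssim n,
\end{align*}
hence $\mathbb{E}|\cdot|^2 \lesssim 1/n$, i.e.\ $b=0$ and $a=1$ in Corollary \ref{cor:smoothing}. That only yields $\sqrt{\log n}/\sqrt n$, since $b + d/2 = 1$. To remove the logarithm I need a gain of order $\lambda_k^{s}n^{-s}$ for some $s>0$, mirroring the hyperuniform argument and \eqref{eq:sobolev}.

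To get it, I use the variance formula
\begin{align*}
\frac12\iint |f(z) - f(w)|^2|K(z,w)|^2\,dA(z)\,dA(w), \qquad f = \phi_k\carac_{S_n}.
\end{align*}
I split the integration domain as follows.
\begin{enumerate}
\item For $|z-w| < a_1\delta_n$ with $z$ in the bulk, \eqref{eq:bulkbound} lets me replace $K$ by $K^\#$, which has Gaussian off-diagonal decay $|K^\#|^2 \approx n^2 e^{-cn|z-w|^2}$. The resulting integral is
\begin{align*}
n^2\iint |f(z)-f(w)|^2 e^{-cn|z-w|^2} \lesssim n^{2-1-s}\|f\|^2_{\dot H^s(\mathbb{R}^2)}
\end{align*}
for $s<\tfrac12$, by comparison with the Gagliardo seminorm.
\item For $|z-w| \ge a_1\delta_n$, \eqref{eq:distantzw} gives superpolynomially small contributions.
\item Points $z$ lying in the edge or outside $S$ are handled by \eqref{eq:exteriorbound} and \eqref{eq:edge}, giving a contribution of order $n|E|\,\|\phi_k\|_\infty^2$.
\end{enumerate}
Lemma \ref{lemma:fracSobolev} and \eqref{eq:sobolev} then give $\|f\|^2_{\dot H^s}\lesssim\lambda_k^s$. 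After dividing by $n^2$, this produces
\begin{align*}
\mathbb{E}|\widehat{\mu}(k)-\widehat{\nu}(k)|^2 \lesssim \frac{\lambda_k^s}{n^{1+s}} + \frac{1}{n^2}.
\end{align*}

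The edge term in item 3 is the delicate part. If sup-norm bounds on $\phi_k$ are too costly, I would restrict the $z$-integral in item 3 to $|z-w|\lesssim\delta_n$ with $w\in S_n$, which bounds it by $n\int_{S_n\cap (E)_{\delta_n}}|\phi_k|^2$. I would then try a trace-type estimate for this boundary layer.

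Finally, I feed $a = 1+s$, $b = s$, $d = 2$ into Corollary \ref{cor:smoothing}, together with the $O(1/n)$ terms, which correspond to $b = 0$, $a=1$ but with an extra $n^{-1}$ gain from the bias and mass terms. This gives
\begin{align*}
\gamma = \frac{1+s}{2+2s} = \frac12,
\end{align*}
so $\mathbb{E}W_2(\mu_2,\mu_3)\lesssim n^{-1/2}$.
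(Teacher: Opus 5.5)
Your plan follows the paper's: reduce to $S_n$, rescale to $S$, discard the event of small $\mu(S_n)$, handle the bias and mass terms via \eqref{eq:bulkbound} and Lemma \ref{lemma:varedge}, and gain $\lambda_k^s n^{-s}$ on the near-diagonal part through the Gagliardo seminorm. The variance estimate does not close as written, however, because of your item 3.

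\textbf{Item 3.} The bound $n|E|\,\norm{\phi_k}_\infty^2$ needs a sup-norm bound for Neumann eigenfunctions, and the paper does not provide one. Even granting the sharp $\norm{\phi_k}_\infty^2\lesssim\lambda_k^{1/2}$, it adds $\sqrt{\log n}\,n^{-3/2}\lambda_k^{1/2}$ to $\mathbb{E}|\widehat{\mu}(k)-\widehat{\nu}(k)|^2$. In the smoothing inequality, after optimizing $t$, this leaves $(\log n)^{1/6}n^{-1/2}$. Your final display $\lambda_k^s n^{-1-s}+n^{-2}$ silently drops this term. The fallback (a layer of width $\delta_n$ with a per-$k$ trace bound $\lesssim\delta_n\lambda_k^{1/2}$) produces the same term. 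Both lose because they resolve the pairs straddling $\partial S_n$ only at scale $\delta_n=\sqrt{\log n/n}$ and then pay a worst-case per-$k$ eigenfunction bound.

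The paper avoids this because item 3 is redundant. The integrand $|f(z)-f(w)|^2|K(z,w)|^2$ is symmetric in $(z,w)$, and $f=\phi_k\carac_{S_n}$ vanishes off $S_n$, so every contributing pair has a point in $S_n$. Since $|K(z,w)|=|K(w,z)|$, you may apply \eqref{eq:bulkbound} (resp.\ \eqref{eq:distantzw}) at that point; this is the ``or'' in the definitions of $R_1,R_2$. A pair with $z\notin S_n$, $w\in S_n$, $|z-w|<a_1\delta_n$ is then just the mirror image of an item-1 pair, with $|K(z,w)|^2\lesssim 1+n^2e^{-cn|z-w|^2}$. The jump of $f$ across $\partial S_n$ is already paid for by $\norm{\phi_k\carac_{S_n}}_{\dot{H}^s(\mathbb{R}^2)}$ through Lemma \ref{lemma:fracSobolev}, which is exactly why that lemma is there.

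\textbf{Item 2.} This is also incorrect as stated. At $|z-w|\approx a_1\delta_n$, \eqref{eq:distantzw} gives only $n^2e^{-\varepsilon a_1\sqrt{\log n}}$, which is not even $O(n)$. Moreover, for $z\in S_n$ the minimum can be $\frac18 d(z,S^c)\approx\delta_n$. Pointwise smallness starts only once the minimum is of order $\log n/\sqrt n$; the paper splits at $b_0\log n/\sqrt n$, where the bound becomes $n^{-2}$. So the shell $a_1\delta_n\le|z-w|\lesssim\log n/\sqrt n$ and the band $d(z,\partial S)\lesssim\log n/\sqrt n$ must actually be integrated. With \eqref{eq:distantzw} as stated, they improve on the trivial $O(n)$ only by a factor $e^{-c\sqrt{\log n}}$ (shell), resp.\ the band width, not by a power of $n$. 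So the $n^{-5/2}$ written for these pieces in the paper is too optimistic.

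This is still enough if you sum over $k$ before estimating, instead of using per-$k$ bounds. Use $\sum_{k\geq 1}\frac{e^{-\lambda_k t}}{\lambda_k}\phi_k(x)^2\lesssim 1+\log(1/t)$ uniformly in $x$ (from the on-diagonal bound for the Neumann heat kernel), together with $\int|K(z,w)|^2\,dA(w)=K(z,z)$. Then the shell contributes $O\bigl(n^{-1}(\log n)^{3/2}e^{-c\sqrt{\log n}}\bigr)$ and the band $O\bigl((\log n)^2n^{-3/2}\bigr)$ to the smoothing sum, both $o(1/n)$. The same device would also rescue your item 3 on its own, giving $\lesssim n^{-2}\log n\int_{\mathbb{C}\backslash S_n}K(z,z)\,dA(z)\lesssim(\log n)^{3/2}n^{-3/2}$.
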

\begin{proof}
    The key idea is to apply the heat smoothing, but there are two technical details to take into account. First, to guarantee that the implicit constants are the same and do not depend on the $S_n$ (which depends on $n$), we are going to rescale the measures and apply the smoothing in the same domain $S$. To the Wasserstein distance this is just multiplying by a power of the rescaling factor.  As this factor is $r_n \approx 1$, informally, this can be seen as applying the smoothing on the set $S_n$ and ignoring any possible dependence on $n$.

    Secondly, we need to guarantee there is some constant such that $ \mu(S_n)\sigma|_{S_n} \geq c dz|_{S_n}$. By the regularity assumptions, we already know $\sigma|_{S_n} \gtrsim dz|_{S_n}$, so it is only necessary to look at the mass. If it happened there was too little mass, we can just grossly bound the optimal map and take the expectation:
    \begin{align*}
        \mathbb{P} \left( \mu(S_n) < \frac{1}{2} \mathbb{E} \mu (S_n) \right) \leq \mathbb{P} \left( \left|\mu(S_n) - \mathbb{E} \mu (S_n) \right| > \frac{1}{2} \mathbb{E} \mu (S_n) \right) \leq \frac{\Var \mu (S_n)}{\left( \frac{1}{2}\mathbb{E} \mu (S_n) \right)^2} \lesssim \frac{1}{n}.
    \end{align*}

    On the other case, this is, if $ \mu(S_n) \geq \frac{1}{2} \mathbb{E} \mu (S_n)$, one can apply the heat smoothing so that the problem is reduced to estimating:
    \begin{align*}
        \mathbb{E} \left| \mu(\phi_k) - \frac{\mu(S_n)}{\sigma(S_n)} \sigma (\phi_k) \right|^2 \lesssim \mathbb{E} \left| \mu(\phi_k) - \mathbb{E} \mu (\phi_k)  \right|^2 + \mathbb{E} \left| \mathbb{E} \mu (\phi_k) - \frac{\mu(S_n)}{\sigma(S_n)} \sigma (\phi_k) \right|^2.
    \end{align*}

    The first term is the variance, which will need to be carefully bounded. For the other term, using equation \eqref{eq:bulkbound} and Lemma \ref{lemma:varedge}, it is easy to see that 
    \begin{align*}
        &\mathbb{E} \left| \mathbb{E} \mu(\phi_k) - \frac{\mu(S_n)}{\sigma(S_n)} \sigma(\phi_k) \right|^2  \\
        &\lesssim \left|\mathbb{E} \mu (\phi_k) - \sigma(\phi_k)\right|^2 + \frac{|\sigma(\phi_k)|^2}{\sigma(S_n)^{2}}   \left( \left|\sigma(S_n) - \mathbb{E} \mu (S_n)\right|^2 + \mathbb{E}\left|\mathbb{E} \mu (S_n) - \mu(S_n)\right|^2 \right)
    \end{align*}
    can be bounded by $\lesssim \frac{\sqrt{\log n}}{n^{ \frac{3}{2} }} \lesssim \frac{1}{n^{ 1 + s_0 }} $, for any $ s_0< \frac{1}{2}  $. For the final argument, any $0 < s_0 < \frac{1}{2} $ would work, so let's fix $ s_0 = \frac{1}{4} $ for instance. Still, I will keep the notation $ s_0 $, as the exponents will look clearer this way. Let's start now with the variance:
    \begin{align*}
        \Var \left( \mu(\phi_k) \right) = \frac{1}{2n^2} \iint \left| \phi_k(z) - \phi_k(w) \right|^2 \left| K(z,w) \right|^2 dzdw.   
    \end{align*}
    We are going to divide the integral into two regions:
	\begin{align*}
		R_1 &= \left\{ |z-w| < a_1 \delta_n; d(z, S^c) > 2 a_0 \delta_n \text{    or    } d(w, S^c) > 2 a_0 \delta_n \right\}, \\
		R_2 &= \left\{ |z-w| > a_1 \delta_n; d(z, S^c) > 2 a_0 \delta_n \text{    or    } d(w, S^c) > 2 a_0 \delta_n \right\} .
	\end{align*}
	Notice they do cover all $ (z, w) \in \operatorname{supp} \phi_k \times \operatorname{supp} \phi_k $, and that they are not necessarily disjoint. In $ R_1 $, we can use the approximation \eqref{eq:bulkbound},
	\begin{align*}
		\iint_{R_1} |\phi_k(z) - \phi_k(w)|^2 |K(z, w)|^2 \lesssim \iint_{\substack{d(z, S^c)> 2a_0 \delta_n \\ |z-w| < a_1 \delta_n}}|\phi_k(z) - \phi_k(w)|^2 \left( 1 + \left| K^\#_n(z, w) \right|^2  \right).
	\end{align*}
	The integral of the ``$ 1 $'' term can be bounded by $ \lesssim 1 $. For the second, if we were working with the Ginibre ensemble, that is, $ Q(z)= |z|^2 $, we would have that the lift of $ Q $ would be $ \psi(z, w) = zw $, and the approximation would be $ |K^\#_n(z, w)| = n e^{- \frac{n}{2} |z-w|^2 }  $. When the potential weight is not this one, we still have this asymptotic:
	\begin{equation*}
		\left| K^\#_n(z, w) \right|^2 = n^2 \left| \partial_1 \partial_2 \psi( z, \overline{w}) \right|^2 e^{n \Re \left\{ 2 \psi(z, \overline{w}) - Q(z) - Q(w) \right\} } \lesssim n^2 e^{-n |z-w|^2 \frac{1}{4} \Delta Q(w) }.  
	\end{equation*}
	This claim follows from bounding over a compact set of $ \partial_1 \partial_2 \psi $ and the Taylor expansion of the exponent
	\begin{equation*}
	    \Re\left\{ 2\psi(z, w) - Q(z) - Q(w) \right\} = -|z-w|^2 \frac{1}{4} \Delta Q(w) + O(|z-w|^3).
	\end{equation*}
	Now, because the function $ t \mapsto t^{ 1+s_0  } e^{-t}\leq 1$, we can bound the integral using fractional Sobolev norms. Namely, using \eqref{eq:sobolev},
	\begin{align*}
        \iint |\phi_k(z) - \phi_k(w)|^2 \left| K^\#_n(z, w)\right|^2 & \lesssim \iint |\phi_k(z) - \phi_k(w)|^2 n^2 e^{-n|z-w|^2 \frac{1}{4}\Delta Q(w)  } \\
        &= n^{ 1 - s_0 }\hspace{-5pt} \iint \hspace{-3.5pt}  \frac{ \left| \phi_k(z) - \phi_k(w) \right|^2 }{|z-w|^{2+2s_0}} |z-w|^{2+2s_0} n^{1+s_0} e^{-n|z-w|^2 \frac{1}{4}\Delta Q(w) } \\
		& \lesssim  n^{ 1 - s_0 } \hspace{-5pt} \iint \frac{ \left| \phi_k(z) - \phi_k(w) \right|^2 }{|z-w|^{2+2s_0}} \leq n^{1- s_0} \norm{\phi_k}_{H^{s_0}(\mathbb{R}^2)} \lesssim n^{1-s_0} \lambda_k^{s_0}.
	\end{align*}
	Notice that to eliminate any dependence on $ n $ from using the domain $ S_n $, we should rescale to $ S $ and then bound the Sobolev seminorms there, just as in the beginning of the proof.

	For the second region, start by using symmetry and equation \eqref{eq:distantzw} to get
	\begin{align*}
		\iint_{R_2} |f(z)- f(w)|^2 |K(z, w)|^2 &\lesssim \iint_{d(z, S^c)> 2 a_0 \delta_n, |z-w|> a_1 \delta_n} |f(w)|^2 |K(z, w)|^2 \\
		&\lesssim \iint_{d(z, S^c)> 2 a_0 \delta_n, |z-w|> a_1 \delta_n} |f(w)|^2 n^2 e^{- \varepsilon \sqrt{n} \min \left\{ \frac{1}{8}d(z, S^c), |z-w|  \right\} }.
    	\end{align*}
	Let's fix a constant $ b_0 $ such that $ \varepsilon b_0 = 4 $. If we were to assume that 
	\begin{equation*}
	    \min \left\{ \frac{1}{8} d(z, S^c), |z-w|  \right\} \geq b_0 \frac{\log n}{\sqrt{n}},  
	\end{equation*}
	then we could bound pointwise
	\begin{equation}\label{eq:2}
		|K(z, w)|^2 \lesssim n^2 e^{-\varepsilon \sqrt{n} \min} \leq n^2 e^{- \varepsilon b_0 \log n} = n^{-2}.
	\end{equation}
	Let's call $ \delta(z) = \frac{1}{8} d(z, S^c)  $, so we have to integrate over the conditions
	\begin{align*}
		\delta(z) > a_0 \frac{\sqrt{\log n}}{\sqrt{n}}, \quad |z-w| > a_1   \frac{\sqrt{\log n}}{\sqrt{n}},
	\end{align*}
    for some constants $a_0, a_1 >0$. Let's separate cases:
		\begin{itemize}
	    \item If $ \delta(z) < |z-w| $, then $ \min = \delta(z) $. 
	\begin{itemize}
		\item If $ \delta(z) > b_0 \frac{\log n}{\sqrt{n}}  $ then we can apply \eqref{eq:2}.
		\item If $ \delta(z) \leq b_0 \frac{\log n}{\sqrt{n}}  $, then we can integrate
		\begin{align*}
			\int_{a \frac{\sqrt{\log n}}{\sqrt{n}} < \delta(z) < b \frac{\log n}{\sqrt{n}} } n^2 e^{-\varepsilon n \delta(z)} & \approx \int_{a \frac{\sqrt{\log n}}{\sqrt{n}} < 1-r < b \frac{\log n}{\sqrt{n}} } n^2 r e^{-\varepsilon \sqrt{n} (1-r)}    dr \lesssim n^{ \frac{3}{2} - \varepsilon b_0 } = n^{ - 5/2}.
		\end{align*}	
	\end{itemize}
	\item If $ \delta (z) > |z-w| $, then $ \min = |z-w| $.
	\begin{itemize}
	    \item If $ |z-w| > b_0 \frac{\log n}{\sqrt{n}}  $ then we can apply \eqref{eq:2}.
	    \item If $ |z-w| \leq b_0 \frac{\log n}{\sqrt{n}} $ then after switching the order of integration we can bound 
	\begin{align*}
		\int_{a_1 \frac{\sqrt{\log n}}{\sqrt{n}} < |z-w| < b_0 \frac{\log n}{\sqrt{n}}  } n^2 e^{- \varepsilon \sqrt{n} |z-w|} dz &= 2 \pi \int_{a_1 \frac{\sqrt{\log n}}{\sqrt{n}} }^{b_0 \frac{\log n}{\sqrt{n}} } n^2 r e^{- \varepsilon \sqrt{n} r} dr  \lesssim n^{- 5/2}.
	\end{align*}

	\end{itemize}
	
	\end{itemize}
	All in all, we have the estimate
	\begin{equation*}
		\mathbb{E} \left|  \mu(\phi_k) - \frac{\mu(S_n)}{\sigma(S_n)} \sigma(\phi_k)   \right|^2 \lesssim \frac{\lambda^{ s_0 }}{n^{1+s_0}}.  
	\end{equation*}
    Applying Corollary \ref{cor:smoothing} then completes the proof. 
\end{proof}

\begin{proof}[Proof of Theorem \ref{thm:RNMbulk}]
    The argument is extremely similar to that of the previous Lemma, as for $n$ big enough we have that $\Omega \subset S_n$ is in the bulk. First, we need to distinguish two cases, depending on the number of random points in $ \Omega $.

    First, if $ \mu(\Omega) \leq \frac{1}{2} \mathbb{E} \mu(\Omega)  $, it is easy to check that
    \begin{equation*}
        \mathbb{P} \left( \mu(\Omega) \leq \frac{1}{2} \mathbb{E} \mu (\Omega)  \right) \lesssim \frac{1}{n}.  
    \end{equation*}
    Hence this case can be bounded in expectation. For the second case, $ \mu(\Omega) \geq \frac{1}{2} \mathbb{E} \mu(\Omega)  $, we want to apply the heat smoothing. This means we have to study for $ \phi_k $ any eigenfunction on supported on $ \Omega $ 
    \begin{align*}
        \mathbb{E} \left| \frac{\mu(\phi_k)}{\mu(\Omega)} - \frac{\sigma(\phi_k)}{\sigma(\Omega)}    \right|^2 \lesssim \mathbb{E} \left| \mu(\phi_k) - \frac{\mu(\Omega)}{\sigma(\Omega)} \sigma(\phi_k)  \right|^2,
    \end{align*}
    which can be bounded with the same fractional Sobolev technique.
\end{proof}

\begin{lemma}\label{lemma:RNM4}
    We have $\mathbb{E} W_2 (\mu_3, \sigma) \lesssim \frac{1}{\sqrt{n}}$. 
\end{lemma}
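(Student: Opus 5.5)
We need to bound $\mathbb{E} W_2(\mu_3,\sigma)$, where
\begin{align*}
\mu_3 = \mu(\mathbb{C}\backslash S_n)\widehat{\lambda} + \mu(S_n)\frac{\sigma|_{S_n}}{\sigma(S_n)}, && \sigma = \sigma|_{E} + \sigma|_{S_n}.
\end{align*}
Both are probability measures supported in $S$. The discrepancy between them is concentrated in the thin region $E$ of width $\approx\delta_n$, together with a small random fluctuation of the total mass $\mu(S_n)$.

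The plan is to pass through the deterministic intermediate measure
\begin{align*}
\tau = \sigma(E)\,\widehat{\lambda} + \sigma(S_n)\frac{\sigma|_{S_n}}{\sigma(S_n)} = \sigma(E)\,\widehat{\lambda} + \sigma|_{S_n}
\end{align*}
and use the triangle inequality $W_2(\mu_3,\sigma)\le W_2(\mu_3,\tau)+W_2(\tau,\sigma)$.

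\textbf{The term $W_2(\tau,\sigma)$.} This only requires transporting $\sigma(E)\widehat\lambda$ onto $\sigma|_E$ inside the edge. Both measures are supported in $E$ and have the same mass $\sigma(E)\approx\delta_n$. I would reuse the dyadic scheme of Lemma~\ref{lemma:RNM2}, with $\sigma$ replacing $\mu$, so every fluctuation term vanishes. The two measures then have the same mass in each angular sector $B^{\mathbf K}_j$ up to an $O(\delta_n\cdot 2^{-\mathbf K})$ deterministic discrepancy, which is handled by the same dyadic telescoping. Inside each sector the transport moves mass a distance $\lesssim\delta_n$. This gives $W_2^2(\tau,\sigma)\lesssim \delta_n^2\,\sigma(E)\lesssim\delta_n^3$, far below $1/n$.

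A cruder alternative avoids the dyadic scheme: both densities are $\approx 1$ on the fibres, so one can transport along each fibre $F_s$ after correcting the $s$-marginal. The marginals $m(s)$ and $\int\sigma$ along fibres differ smoothly in $s$, so moving that mass tangentially costs distance $O(1)$ but mass only $O(\delta_n)$. This gives only $\delta_n^{1/2}$, which is too weak. So I will rely on the fact that the $s$-marginal of $\widehat\lambda$ is proportional to that of $\rho_n$, and compare $\rho_n|_E$ with $\sigma|_E$ through \eqref{eq:edge}. The $s$-marginals of $\rho_n|_E$ and $\sigma|_E$ differ pointwise by $O(\delta_n\cdot \frac{\log^3 n}{\sqrt n} + \frac1{\sqrt n})$, with the erfc tails integrating to $O(n^{-1/2})$ along each fibre. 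A one-dimensional $W_2$ estimate along $\partial S$, using that both marginals are bounded below by $\approx\delta_n$, then gives a cost of order $n^{-1/2}$. \emph{This marginal comparison is the step I expect to be the main obstacle}, since the edge asymptotics \eqref{eq:edge} are only accurate to $\log^3 n/\sqrt n$. If needed, I would instead bound via \eqref{eq:dynamicWass} applied to the one-dimensional marginals.

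\textbf{The term $W_2(\mu_3,\tau)$.} Here $\mu_3$ and $\tau$ differ only by a scalar reweighting between the two probability components $\widehat\lambda$ and $\sigma|_{S_n}/\sigma(S_n)$. The mass to be moved is $|\mu(S_n)-\sigma(S_n)|$, over a distance at most $\operatorname{diam}(S)$. Hence
\begin{align*}
\mathbb{E}W_2^2(\mu_3,\tau)\lesssim \mathbb{E}|\mu(S_n)-\sigma(S_n)|\le \bigl(\Var\mu(S_n)\bigr)^{1/2} + |\mathbb{E}\mu(S_n)-\sigma(S_n)|.
\end{align*}
Lemma~\ref{lemma:varedge} gives $\Var\mu(S_n)\lesssim \sqrt{\log n}/n^{3/2}$, while \eqref{eq:bulkbound} gives $|\mathbb{E}\mu(S_n)-\sigma(S_n)|\lesssim 1/n$. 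The first term then contributes $n^{-3/4}(\log n)^{1/4}$ to $\mathbb{E}W_2^2$, i.e.\ $n^{-3/8}$ to $\mathbb{E}W_2$, which is too big.

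To sharpen this, I would instead transport the excess mass only across the thin band $\{2a_0\delta_n<d(z,\partial S)<3a_0\delta_n\}$, where both measures have density $\approx 1$. Applying \eqref{eq:dynamicWass} on a neighbourhood $U$ of width $\delta_n$ around $\partial S_n$, with the reweighting spread uniformly there, the $L^2$ difference is $\approx|\mu(S_n)-\sigma(S_n)|\,|U|^{-1/2}$ and the Poincaré constant is $\lesssim\delta_n$ in the thin direction. This gives
\begin{align*}
\mathbb{E}W_2^2\lesssim \frac{\mathbb{E}|\mu(S_n)-\sigma(S_n)|^2}{\delta_n}\lesssim \frac{\sqrt{\log n}/n^{3/2}}{\delta_n}=\frac1n,
\end{align*}
as needed.

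Combining the two terms with Jensen yields $\mathbb{E}W_2(\mu_3,\sigma)\lesssim 1/\sqrt n$.
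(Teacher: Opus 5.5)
\textbf{The proposal has a genuine gap in the bound on $W_2(\tau,\sigma)$.} Your splitting through $\tau$ is the paper's own: $\mu_3-\tau$ and $\tau-\sigma$ are its $\gamma_1+\gamma_3$ and $\gamma_2$. The paper, however, estimates every piece in $\dot H^{-1}$ over the whole droplet. You instead bound $W_2(\tau,\sigma)$ by a transport confined to the edge, and that fails at exactly the step you flagged.

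In the tubular coordinates $(s,t)$ of Lemma \ref{lemma:RNM2}, let $g(s)=\frac{\sigma(E)}{\rho(E_n)}m(s)-(\text{$s$-marginal of }\sigma|_E)$. From \eqref{eq:edge} you only get $|g|\lesssim n^{-1/2}$, and both marginals have density $\approx\delta_n$ on a curve of length $O(1)$. The one-dimensional analogue of \eqref{eq:dynamicWass} then yields
\begin{align*}
W_2\lesssim\frac{1}{\sqrt{\delta_n}}\,\norm{g}_{L^2(\partial S)}\lesssim\frac{1}{\sqrt{n\,\delta_n}}=(n\log n)^{-1/4},
\end{align*}
not $n^{-1/2}$: the factor $a^{-1/2}$ in \eqref{eq:dynamicWass}, here $\delta_n^{-1/2}$, was dropped.

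This is not merely a lossy estimate. The map $z\mapsto s$ is Lipschitz on the thin band $E$, so any coupling of $\sigma(E)\widehat\lambda$ with $\sigma|_E$ costs at least a constant times the one-dimensional $W_2$ of the marginals. That cost is $\approx(n\delta_n)^{-1/2}$ whenever $g$ genuinely has size $n^{-1/2}$ at scale $O(1)$: a mass $\approx n^{-1/2}$ must travel a distance $O(1)$ through a channel of density $\delta_n$.

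The same defect kills your dyadic claim $W_2^2(\tau,\sigma)\lesssim\delta_n^3$. Replacing $\mu$ by $\sigma$ removes the random terms. But $\sigma(B^k_j)$ and $\sigma(E)\rho(B^k_j)/\rho(E_n)$ still differ by a relative amount $O\bigl((\sqrt n\,\delta_n)^{-1}\bigr)=O\bigl((\log n)^{-1/2}\bigr)$. So the telescoping steps are not free, and the coarsest levels alone already cost $(n\delta_n)^{-1/2}$.

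\textbf{The missing idea is to let the bulk absorb the fibre-mass discrepancy.} Your reduction $W_2(\tau,\sigma)\le W_2(\sigma(E)\widehat\lambda,\sigma|_E)$ throws this away. Since $\sigma\approx 1$ on the convex set $S$, \eqref{eq:peyre} reduces matters to bounding $\bigl|\int f\,d(\sigma(E)\widehat\lambda-\sigma|_E)\bigr|$ for mean-zero $f$ with $\norm{f}_{H^1(S)}\lesssim 1$. Split this signed measure into two parts:
\begin{itemize}
\item a within-fibre shape part, which integration by parts in $t$ bounds by $\norm{\nabla f}_{L^2}\,\delta_n^{3/2}$;
\item the part $g(s)\,p_s\,ds$, with $p_s$ the normalized restriction of $\sigma$ to the fibre, which the trace theorem bounds by $\lesssim\norm{g}_\infty\bigl(\norm{\partial_t f}_{L^1(E)}+\norm{f}_{L^1(\partial S)}\bigr)\lesssim n^{-1/2}$.
\end{itemize}
This is how the paper handles the edge term. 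Your within-edge route could only be saved by a much stronger fibre bound, such as $\norm{g}_\infty\lesssim n^{-1}\operatorname{polylog}(n)$. That bound is plausible from the oddness of $\frac12\operatorname{erfc}(x)-\carac_{(-\infty,0)}(x)$, but \eqref{eq:edge} does not provide it.

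\textbf{Your bound on $W_2(\mu_3,\tau)$ is right in substance but mis-justified.} With $D=\mu(S_n)-\sigma(S_n)$, the difference $\mu_3-\tau=D\bigl(\sigma|_{S_n}/\sigma(S_n)-\widehat\lambda\bigr)$ has its bulk part spread over all of $S_n$, not over a thin band. Also, a thin annulus has Poincar\'e constant of order $1$, not $\delta_n$. Still, $\mathbb{E}W_2^2\lesssim\mathbb{E}|D|^2/\delta_n\lesssim 1/n$ follows from \eqref{eq:dynamicWass} on $S$, since $\tau\gtrsim dA$ there and $\norm{\mu_3-\tau}_{L^2(S)}\approx|D|\,\delta_n^{-1/2}$. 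Using \eqref{eq:peyre} together with $\int_E|f|^2\lesssim\delta_n\norm{f}^2_{H^1(S)}$, as the paper does, even gives $W_2(\mu_3,\tau)\lesssim|D|$.
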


\begin{proof}[Proof of Lemma \ref{lemma:RNM4}]
    We are computing $\mathbb{E} W_2 \left( \mu(\mathbb{C} \backslash S_n) \widehat{\lambda} + \frac{\mu(S_n)}{\sigma(S_n)} \sigma|_{S_n}, \sigma \right)$. Because $\sigma(z) \gtrsim dz$ is bounded below when $z\in S$,
    \begin{equation*}
        W_2(\mu_3, \sigma) \lesssim \sup \left\{ \left| \int f d(\mu_3 - \sigma) \right| \;:\; \norm{f}_{H^{1}(S)} \leq 1  \right\}. 
    \end{equation*}
    We cannot directly use \eqref{eq:dynamicWass} as that bound would not be precise enough on the edge. Let's write $ \mu_3- \sigma = \gamma_{1} + \gamma_{2} + \gamma_{3} $:
    \begin{align*}
	    \gamma_1 & = \left( \mu(\mathbb{C} \backslash S_n)- \sigma(E) \right) \widehat{\lambda},\\
	    \gamma_2 & = \sigma(E) \widehat{\lambda} - \sigma|_E, \\
	    \gamma_3 & = \left( \frac{\mu(S_n)}{\sigma(S_n)} - 1  \right) \sigma|_{S_n}. 
    \end{align*}
    The measure $ \gamma_3 $ corresponds with the part inside the bulk, and can be well controlled by an $ L^2 $-type bound. The other two signed measures are all supported on the edge $ E $, where we will need to exploit that although it has a fixed diameter, it is a thin layer. Notice that $ \gamma_2 $ is not a random measure.

    Let's start with $ \gamma_1 $. Denote the set around the boundary $ \Omega_A = \left\{ z\in S \;:\; d(z, \partial S) < A \right\}  $. Using the coarea formula we have
    \begin{align*}
	    \int_{\Omega_A} |f|^2 = \int_{0}^A \int_{d(z, \partial S)} |f(z)|^2 dz dt = \int_0^A \norm{f}^2_{L^2(\Gamma_t)} dt.
    \end{align*}
    We are integrating over the curves $ \Gamma_t = \left\{ z \in S\;:\; d(z, \partial S) = t\right\}  $. We can use the trace theorem to get
    \begin{align*}
	    \norm{f}_{L^2(\Gamma_t)} \leq C_t \norm{f}_{L^2(\Omega_A)} \leq C_t \norm{f}_{L^2(S)} \lesssim C_t \norm{f}_{H^1(S)},
    \end{align*}
    where we have used the Poincare inequality on $ S $, as we can assume without loss of generality that $ \int f = 0 $. Finally, it is well-known that for $ t $ small enough, the constants $ C_t $ are uniformly bounded, as the curves $ \Gamma_t $ are smooth small perturbations of $ \partial S $. Combining this two results,
    \begin{align*}
	    \int_{\Omega_A} |f|^2 \lesssim A. 
    \end{align*}
    Choosing $ A \propto \delta_n $ so that $ E \subset \Omega_A $, we have 
    \begin{align*}
	    \left| f d\gamma_1 \right| \leq \norm{f}_{L^2(\Omega_A)} \norm{ \gamma_1 }_{L^2(E)} \lesssim \sqrt{\delta_n} \sqrt{|E|} \norm{\gamma_1}_{L^\infty} \approx \delta_n \norm{\gamma_1}_{L^\infty} \lesssim \delta_n \left| \mu(\mathbb{C} \backslash S_n) - \sigma(E) \right| \delta_n^{-1} .
    \end{align*}
    Now, taking expectation,
    \begin{align*}
	    \mathbb{E} \left| \mu(\mathbb{C} \backslash S_n) - \sigma(E) \right| = \mathbb{E} \left| \mu(S_n) - \sigma(S_n) \right| \lesssim \sqrt{\Var \mu(S_n)} + \left| \mathbb{E} \mu(S_n) - \sigma(S_n) \right| \lesssim \frac{1}{\sqrt{n}}.  
    \end{align*}
    For $ \gamma_2 $, notice that we can write the measures in a tubular coordinates of $ \partial S $,
    \begin{align*}
	    \sigma(E)\widehat{\lambda} = \frac{\sigma(E)}{\rho(E)} m(s) \operatorname{Unif}(F_s), && \sigma|_E = \sigma_{F}(s) p_s,
    \end{align*}
    where $ p_s $ is a probability measure on the fiber $ F_s $ with density proportional to $ \sigma $. $ m(s) $ and $ \sigma_{F}(s) $ represent the ``mass'' that each measure has on the fiber. We further split 
    \begin{align*}
	    \gamma_2 = \gamma_{s} + \gamma_m = \frac{\sigma(E)}{\rho(E)} m(s) \left[ \operatorname{Unif}(F_s) - p_s \right] ds + \left[ \frac{\sigma(E)}{\rho(E)} m(s) - \sigma_{F}(s) \right] p_s ds 
    \end{align*}
    $ \gamma_s $ encodes the information regarding the difference in shape of the densities in each fiber, while $ \gamma_m $ will measure the discrepancy between the masses of each fiber. 
    For each $ s $, $ \left[ \operatorname{Unif}(F_s) - p_s \right] $ is a signed measure on $ F_s = \left\{ s \right\} \times [0, \tau(s)] $. It has mass $ 0 $, and both measures $ p_s  $ and $ \operatorname{Unif}(F_s) $ have densities bounded by $ \lesssim \frac{1}{\tau(s)} \approx \delta_n^{-1}$. Consider the function 
    \begin{align*}
	    V(s, t) := \int_{t}^{\tau(s)} \frac{\sigma(E)}{\rho(E)} m(s) \left[ \frac{1}{\tau(s)} - p_s(u)  \right] du, \quad V(s, 0) = V(s, \tau(s)) = 0, \quad |V(s, t)| \lesssim \delta_n.
    \end{align*}
    Use then integration by parts 
    \begin{align*}
	    \int_{F_s} f(s, t) \frac{\sigma(E)}{\rho(E)} m(s) \left[ \frac{1}{\tau(s)} - p_s(t)  \right] dt = -\int_{F_s} \partial_t f(s, t) V(s, t) dt.
    \end{align*}
    This gives the bound
    \begin{align*}
	    \left| \int f d \gamma_s \right| &= \left| \int_E \partial_t f V(s, t) \left( 1 + \kappa(s)t \right) dsdt   \right|  \lesssim \norm{\partial_t f}_{L^2(E)} \norm{V}_{L^2(E)} \lesssim \norm{\nabla f}_{L^2} \delta_n^{ \frac{3}{2} } \lesssim \frac{1}{\sqrt{n}}.
    \end{align*}
    For the $ \gamma_m $ term, notice that using \eqref{eq:edge}, we can bound 
    \begin{align*}
	    m(s) - \sigma_{F}(s) &= \int_{t^-(s)}^{t^+(s)} \left( \frac{1}{n}K(z, z) - \sigma(z)   \right) \left( 1 + \kappa(s) t \right) dt  \\
	    \left| m(s) - \sigma_F(s) \right| & \lesssim \int_{t^-(s)}^{t^+(s)} \left( e^{-cn d(z)^2} + \frac{\log^3 n}{\sqrt{n}}   \right) dt \lesssim \frac{1}{\sqrt{n}}. 
    \end{align*}
    This means that 
    \begin{align*}
	    \left| \left(\frac{\sigma(E)}{\rho(E_n)} -1 \right) m(s) + m(s) - \sigma_F(s)  \right| \leq \frac{\left| \rho(E_n) - \sigma(E) \right| }{\rho(E_n)} m(s) + \left| m(s) - \sigma_F(s) \right| \lesssim \frac{1}{\sqrt{n}},    
    \end{align*}
    where it is used Lemma \ref{lemma:densitym} to cancel out $ m(s) \approx \delta_n \approx \rho(E_n) $ and that $\left| \rho(E_n) - \sigma(E) \right| \lesssim \frac{1}{n}$. We can then bound
    \begin{align*}
	    \left| \int f d \gamma_m \right| = \left| \int \left[ \frac{\sigma(E)}{\rho(E_n)} m(s) - \sigma_F(s)  \right] f(s, t) p_s(t) dtds  \right| \lesssim \frac{1}{\sqrt{n}}   \left| \int f(s, t) p_s(t) dtds  \right|.
    \end{align*}
    In each fiber, using that $ p_s $ has mass 1,
    \begin{align*}
	    F(s) := \left| \int f(s, t) p_s dt \right| \leq \left| \int \left( f(s, t) - f(s, 0) \right) p_s dt  \right| + |f(s,0)|  \leq \norm{\partial_t f}_{L^1(F_s)} + |f(s, 0)|.
    \end{align*}
    Noticing that $ f(s, 0) $ is just the function evaluated on $ z\in \partial S $, we get the estimate
    \begin{align*}
	    \int |F(s)| ds \leq \norm{\partial_t f }_{L^1(E)} + \norm{f}_{L^1(\partial S)} \lesssim \norm{\partial_t f }_{L^2(E)} + \norm{f}_{L^2(\partial S)} \lesssim 1.
    \end{align*}
    
    For $ \gamma_3 $, we directly use Poincare to get 
    \begin{align*}
	    \left| f d \gamma_4 \right| \leq \norm{f}_{L^2} \norm{\gamma_4}_{L^2} \lesssim \norm{\gamma_4}_{L^2} \lesssim \left| \frac{\mu(S_n)}{\sigma(S_n)} - 1  \right|.
    \end{align*}
    When taking expectation we would have to study $ \mathbb{E} \left| \mu(S_n) - \sigma(S_n) \right|  $, which already has been bounded in the $ \gamma_1 $ argument. \vspace{-0.1em}
\end{proof}

\section*{Acknowledgments}
The author has been supported by the grant ``Ayudas para contratos predoctorales para la formación de doctores 2022'', by the Agencia Estatal de Investigación and by the FSE+, project PID2021-123405NB-I00; and by the project PID2024-160033NB-I00.

\textbf{LLM usage disclosure:} Large Language Models were used as a supplementary tool during the development of this manuscript, primarily in the process of revision and troubleshooting. All AI-generated suggestions were treated strictly as advisory, and the author rigorously reviewed, adapted, and mathematically verified all outputs independently, taking full responsibility for the final content and validity of this work.

\printbibliography

@article{peyre_2018, title={Comparison between {W2} distance and {H-1} norm, and {L}ocalization of {W}asserstein distance}, volume={24}, DOI={https://doi.org/10.1051/cocv/2017050}, number={4}, journal={ESAIM: Control, Optimisation and Calculus of Variations}, author={Peyre, Rémi}, year={2018},  pages={1489-1501} }

@phdthesis{prodhomme,
  TITLE = {{Contributions to the optimal transport problem and its regularity}},
  AUTHOR = {Prod'Homme, Maxime},
  URL = {https://theses.hal.science/tel-03419872},
  NUMBER = {2021TOU30122},
  SCHOOL = {{Universit{\'e} Paul Sabatier - Toulouse III}},
  YEAR = {2021},
  TYPE = {Theses},
  HAL_ID = {tel-03419872},
  HAL_VERSION = {v2},
}

@article{arias2024equidistributionpointsharmonicensemble,
author = {García Arias, Pablo},
title = {Equidistribution of points in the harmonic ensemble for the Wasserstein distance},
journal = {Mathematika},
volume = {72},
number = {2},
pages = {e70095},
doi = {https://doi.org/10.1112/mtk.70095},
year = {2026}
}

@article{borda2023riesz,
author = {Borda, Bence and Grabner, Peter and Matzke, Ryan W.},
title = {Riesz energy, discrepancy, and optimal transport of determinantal point processes on the sphere and the flat torus},
journal = {Mathematika},
volume = {70},
number = {2},
pages = {e12245},
doi = {https://doi.org/10.1112/mtk.12245},
year = {2024}
}

@article{WardIdentities, title={Random normal matrices and {W}ard identities}, volume={43}, DOI={https://doi.org/10.1214/13-aop885}, number={3}, journal={The Annals of Probability}, publisher={Institute of Mathematical Statistics}, author={Yacin Ameur and Haakan Hedenmalm and Makarov, Nikolai}, year={2015} }

@article{ameur_2011, title={Near-{B}oundary {A}symptotics for {C}orrelation Kernels}, volume={23}, DOI={https://doi.org/10.1007/s12220-011-9238-4}, number={1}, journal={Journal of Geometric Analysis}, author={Ameur, Yacin}, year={2011},  pages={73–95} }

@book{santambrogio_2015, title={Optimal {T}ransport for {A}pplied {M}athematicians}, ISBN={9783319208282}, publisher={Birkhäuser}, author={Filippo Santambrogio}, year={2015} }

@book{villani_2009, 
    place={Berlin, Heidelberg}, 
    title={Optimal transport: {O}ld and new}, 
    publisher={Springer Berlin Heidelberg}, 
    author={Villani, Cédric}, 
    year={2009}
}

@book{Hough_Krishnapur_Peres_2012, place={Providence (R.I.)}, title={Zeros of {G}aussian analytic functions and {D}eterminantal {P}oint {P}rocesses}, publisher={American Mathematical Society}, author={Hough, John Ben and Krishnapur, Manjunath and Peres, Yuval}, year={2012}}

@article{bobkov_ledoux_2021, title={A simple {F}ourier analytic proof of the {AKT} optimal matching theorem}, volume={31}, DOI={https://doi.org/10.1214/20-aap1656}, number={6}, journal={The Annals of Applied Probability}, author={Bobkov, Sergey G. and Ledoux, Michel}, year={2021} }

@article{brown_steinerberger_2020, title={On the {W}asserstein distance between classical sequences and the Lebesgue measure}, volume={373}, DOI={https://doi.org/10.1090/tran/8212}, number={12}, journal={Transactions of the American Mathematical Society}, author={Brown, Louis and Steinerberger, Stefan}, year={2020},  pages={8943–8962} }

@article{borda_2023, title={Empirical measures and random walks on compact spaces in the quadratic {W}asserstein metric}, volume={59}, DOI={https://doi.org/10.1214/22-aihp1322}, number={4}, journal={Annales de l’Institut Henri Poincaré, Probabilités et Statistiques}, author={Borda, Bence}, year={2023} }

@misc{borda2025smoothinginequalitiestransportmetrics,
      title={Smoothing inequalities for transport metrics in compact spaces}, 
      author={Bence Borda and Jean-Claude Cuenin},
      year={2025},
      eprint={2510.21380},
      archivePrefix={arXiv},
      primaryClass={math.CA}
}

@article{Ginibre1965,
    author = {Ginibre, Jean},
    title = {Statistical {E}nsembles of {C}omplex, {Q}uaternion, and {R}eal Matrices},
    journal = {Journal of Mathematical Physics},
    volume = {6},
    number = {3},
    pages = {440-449},
    year = {1965},
    issn = {0022-2488},
    doi = {10.1063/1.1704292}
}

@article{elbau_felder_2005, title={Density of {E}igenvalues of {R}andom {N}ormal {M}atrices}, volume={259}, DOI={https://doi.org/10.1007/s00220-005-1372-z}, number={2}, journal={Communications in Mathematical Physics}, author={Elbau, Peter and Felder, Giovanni}, year={2005},  pages={433–450} }

@article{hedenmalm_makarov_2012, title={Coulomb gas ensembles and {L}aplacian growth}, volume={106}, DOI={https://doi.org/10.1112/plms/pds032}, number={4}, journal={Proceedings of the London Mathematical Society}, publisher={Wiley}, author={Håkan Hedenmalm and Makarov, Nikolai}, year={2012},  pages={859–907} }

@article{jalowyCircularLaw,
author = {Jonas Jalowy},
title = {{The {W}asserstein distance to the circular law}},
volume = {59},
journal = {Annales de l'Institut Henri Poincaré, Probabilités et Statistiques},
number = {4},
publisher = {Institut Henri Poincaré},
pages = {2285 -- 2307},
year = {2023},
doi = {10.1214/22-AIHP1317}
}

@article{MarzoUniversality,
author = {Marzo, Jordi and Molag, Leslie and Ortega-Cerdà, Joaquim},
title = {Universality for fluctuations of counting statistics of random normal matrices},
journal = {Journal of the London Mathematical Society},
volume = {113},
number = {2},
pages = {e70462},
doi = {https://doi.org/10.1112/jlms.70462},
year = {2026}
}

@article{BerezinTransform,
author = {Ameur, Yacin and Makarov, Nikolai and Hedenmalm, Håkan},
title = {Berezin transform in polynomial bergman spaces},
journal = {Communications on Pure and Applied Mathematics},
volume = {63},
number = {12},
pages = {1533-1584},
doi = {https://doi.org/10.1002/cpa.20329},
year = {2010}
}

@book{mclean_2000, address={Cambridge}, title={Strongly {E}lliptic {S}ystems and {B}oundary {I}ntegral {E}quations}, ISBN={9780521663755}, publisher={Cambridge University Press}, author={McLean, William}, year={2000} }

@article{BREZIS20181355,
title = {{G}agliardo–{N}irenberg inequalities and non-inequalities: The full story},
journal = {Annales de l'Institut Henri Poincaré C, Analyse non linéaire},
volume = {35},
number = {5},
pages = {1355-1376},
year = {2018},
issn = {0294-1449},
doi = {https://doi.org/10.1016/j.anihpc.2017.11.007},
author = {Haïm Brezis and Petru Mironescu}
}

@book{coste_2021, title={Order, fluctuations, rigidities: work in progress}, url={https://scoste.fr/assets/survey_hyperuniformity.pdf}, author={Coste, Simon}, year={2021} }

@article{CHAFAI20181447,
title = {Concentration for {C}oulomb gases and {C}oulomb transport inequalities},
journal = {Journal of Functional Analysis},
volume = {275},
number = {6},
pages = {1447-1483},
year = {2018},
issn = {0022-1236},
doi = {https://doi.org/10.1016/j.jfa.2018.06.004},
author = {Djalil Chafaï and Adrien Hardy and Mylène Maïda}
}

@misc{hyperuniformityoptimaltransportpoint,
      title={Hyperuniformity and optimal transport of point processes}, 
      author={Raphaël Lachièze-Rey and D. Yogeshwaran},
      year={2026},
      eprint={2402.13705},
      archivePrefix={arXiv},
      primaryClass={math.PR},
}

@article{ERBAR2025110974,
title = {Optimal transport of stationary point processes: {M}etric structure, gradient flow and convexity of the specific entropy},
journal = {Journal of Functional Analysis},
volume = {289},
number = {4},
pages = {110974},
year = {2025},
issn = {0022-1236},
doi = {https://doi.org/10.1016/j.jfa.2025.110974},
author = {Matthias Erbar and Martin Huesmann and Jonas Jalowy and Bastian Müller}
}

@misc{nazarov2010fluctuationsrandomcomplexzeroes,
      title={Fluctuations in random complex zeroes: {A}symptotic normality revisited}, 
      author={Fedor Nazarov and Mikhail Sodin},
      year={2010},
      eprint={1003.4251},
      archivePrefix={arXiv},
      primaryClass={math.PR}
}

\end{document}